\newcommand{\figone}{Figure~\ref{fig:Stasheff S3}}
\newtheorem{defn}{Definition}
\newtheorem{thm}{Theorem}
\newtheorem{prop}{Proposition}
\newtheorem{lem}[prop]{Lemma}
\newtheorem{cor}[prop]{Corolary}
\begin{document}

\title{Integrable Systems Arising from Separation of Variables on $S^{3}$ }
\author{Diana M.H.~Nguyen, Sean R.~Dawson,  Holger R.~Dullin\footnote{Emails: \url{diana.nguyen@sydney.edu.au}, \url{sdaw6022@uni.sydney.edu.au}, \url{holger.dullin@sydney.edu.au.}}\\School of Mathematics and Statistics,\\The University of Sydney, Australia}

\maketitle
\begin{abstract}
We show that the space of orthogonally separable coordinates on the sphere $S^3$ induces a natural family of integrable systems, which after symplectic reduction leads to a family of integrable systems on $S^2 \times S^2$.
The generic member of the family corresponds to ellipsoidal coordinates. We use the theory of compatible Poisson structures to study the critical points and critical values of the momentum map.
Interesting structure arises because the ellipsoidal coordinate system can degenerate in a variety of ways, and all possible orthogonally separable coordinate systems on $S^3$ (including degenerations) have the topology of the Stasheff polytope $K^4$, which is a pentagon. We describe how the generic integrable system degenerates, and how the appearance of global $SO(2)$ and $SO(3)$ symmetries is the main feature that organises the various degenerate systems.
For the whole family we show that there is an action map whose image is an equilateral triangle. When higher symmetry is present, this triangle ``unfolds'' into a semi-toric polygon (when there is one global $S^1$-action) or a Delzant polygon (when there are two global $S^1$-actions). We believe that this family of integrable systems is a natural playground for theories of global symplectic classification of integrable systems.
\end{abstract}

\section{Introduction}\label{sec:S3-1}

 Classifying and cataloguing integrable systems is an important unsolved problem. Only for particular classes of systems does a classification exist.
 It was shown by Atiyah-Guillemin-Sternberg \cite{atiyah,Guillemin1982} that the image of the momentum map of a toric system is a convex polytope, called the momentum polytope. This polytope completely classifies the toric system up to an equivariant symplectomorphism. Delzant \cite{BSMF_1988__116_3_315_0} gave an explicit construction of toric manifolds based on their momentum polytopes. Topological classification of Liouville foliations has been extensively studied by Bolsinov and Fomenko \cite{book} and extended towards the orbital classification of integrable systems.
  V\~u Ng\d{o}c and Pelayo have extended the toric classification to semi-toric systems  \cite{VuNgoc07,VuNgoc09}. While a complete theory of classification is still far out of reach, we'd like to investigate the possibility of extending the current theory to broader classes of integrable systems. 
  
  A very well studied class of integrable systems are superintegrable systems, see, e.g., \cite{Fasso05}. A classification of superintegrable systems in two and three degrees of freedom has been achieved by Kalnins, Kress and Miller in a series of works \cite{Kalnins2005-1,Kalnins2005-2,Kalnins2005-3,Kress06,Kalnins2006-5}. In their book \cite{KKM18} they highlight the link between superintegrability and separation of variables. Superintegrable systems that are separable in multiple coordinate systems provide a rich source of integrable systems. This is because each distinct separable coordinate system gives rise to a St\"{a}ckel integrable system \cite{Stackel}. Separable coordinates on  conformally flat spaces has been extensively studied by Kalnins and Miller in \cite{Kalnins1986,kalnins76,miller81}. More recently, Sch\"{o}bel studied the space of separable coordinates on the $n$-sphere as an algebraic variety \cite{Schoebel2014,Schoebel2015,Schoebel2016}. While it is known \cite{KKM18} that all separable coordinates on the sphere can be obtained as appropriate limits of the general Jacobi ellipsoidal coordinates, Sch\"{o}bel's work formalises this by giving a topology to this space in the form of the Stasheff polytope. The inspiration for this paper was to  establish a similar topology in the space of integrable systems that arise from separating the geodesic flow on $S^3$ in this family of coordinates. It is likely that similar constructions can be done for any superintegrable and multi-separable systems. In fact, the idea to use multi-separability to define interesting fibrations has been used in \cite{Dullin2012,Dullin2016} for the most fundamental systems of classical mechanics, the harmonic oscillator and the Kepler problem. To then use the periodic flow of the superintegrable system for reduction was first done in \cite{Dawson2022} and this paper is the natural continuation of that work where instead of a superintegrable system on $\mathbb{R}^3$ a superintegrable system on $S^3$ is the starting point.

Another motivation for our work are the recent studies \cite{Alonso,Alonso2019,Palmer2018,Hohloch2017} of various integrable systems on the compact symplectic manifold $S^2\times S^2$. We will show that the symplectic reduction of the geodesic flow on $S^3$ results in a reduced system on a compact symplectic leaf of $\mathfrak{so}^*(4)$ that is diffeomorphic to $S^2\times S^2$. The $3$-degrees of freedom integrable systems on $T^*S^3$ obtained from separation of variables descend to $2$-degrees of freedom systems on $S^2\times S^2$ through this quotient. We will employ more recent techniques in the theory of compatible Poisson structures and bi-Hamiltonian systems \cite{Bolsinov-Borisov,Bolsinov-Oshemkov} to study these systems in detail. This will allow us to realise these systems as special restricted cases of the Manakov top \cite{Komarov_1991, 339ab604500943c5b69d0da421f62e74}.

A somehow related question is the study of separable systems depending on parameters, foremost the geodesic flow on an ellipsoid  \cite{Moser:2261095}. In \cite{DAVISON20072437,Davison2007} the geodesic flow on 3-dimensional ellipsoids with various sets of equal semi-major axes has been studied. It is astonishing how similar these system -- degenerate or not -- are to the ones studied in this paper. However, the fundamental difference is that there a constant energy slice of a 3-degree of freedom system is studied, while here we reduce and study the resulting 2-degree of freedom system. As a result, here we obtain a system on a compact symplectic manifold, which is better suited as a playground for symplectic classification.
Similar degenerations have also been studied for the Neumann system \cite{Dullin2012} and again there are many similarities.

The paper is structured as follows. Section \ref{sec:S3-2} introduces the basic theory of separation of variables. 
We focus on the  separation of variables in the ellipsoidal coordinate system in Section \ref{sec:S3-3}. Section \ref{sec:S3-4} discusses the symplectic reduction of the geodesic flow on $S^3$ with emphasis on establishing the reduced ellipsoidal integrable system on $S^2\times S^2$. The theory of compatible Poisson structures is applied to study the reduced ellipsoidal integrable system in Section \ref{sec:S3-5}. In Section \ref{sec:degen}, we combine the techniques and results of Sections \ref{sec:S3-3}, \ref{sec:S3-4}, and \ref{sec:S3-5} to study the integrable systems obtained from separation of variables in the degenerate coordinate systems, namely prolate, oblate, Lam\'{e}, spherical and cylindrical coordinates.

\section{Orthogonally Separable Coordinate Systems on $S^{3}$}\label{sec:S3-2}

In this section, we introduce some basic concepts from the theory
of separation of variables. For more details, see \cite{KKM18,Stackel}.
Let $(s_{i},p_{i})$ be local canonical coordinates on $T^{*}M$ where
$M$ is an $n-$dimensional differentiable manifold with metric $g$.
Define $\mathcal{S}^{k}(M)$ to be the space of smooth contravariant
symmetric tensors of order $k$ on $M$, in particular $k=1$ is the
space of vector fields on the manifold. Each $\bm{K}\in\mathcal{S}^{k}(M)$
can be associated with a $C^{\infty}$ real function $E_{\bm{K}}$
on $T^{*}M$ locally expressed in the momenta as 
\begin{equation}
E_{\bm{K}}=\sum_{i}\frac{1}{k!}\bm{K}^{i_{1}\dots i_{k}}p_{i_{1}}\dots p_{i_{k}}.\label{eq:Killing tensor def}
\end{equation}
The Lie bracket between two tensors $\bm{K}\in\mathcal{S}^{k}(M)$
and $\bm{R}\in\mathcal{S}^{r}(M)$, denoted by $[\bm{K},\bm{R}]\in\mathcal{S}^{k+r-1}(M)$,
is defined by 
\begin{equation}
E_{[\bm{K},\bm{L}]}=\{E_{\bm{K}},E_{\bm{L}}\}_{(s_{i},p_{i})}\label{eq:commute}
\end{equation}
 where $\{\cdot,\cdot\}_{(s_{i},p_{i})}$ denotes the canonical Poisson
bracket in the curvilinear coordinates.
\begin{defn}
On a Riemannian manifold $(M,g)$, a symmetric tensor $\bm{K}$ is
a Killing tensor if it commutes with the metric $[\bm{K},g]=0$.
\end{defn}
There is a natural identification of Killing tensors with first integrals
of the geodesic flow:
A function $E_{\bm{K}}$ is a first integral of the geodesic flow
if and only if $\bm{K}$ is a Killing tensor, see, e.g \cite{Stackel}. 
Similarly to how a set of $n$ integrals in involution on a manifold
form a Liouville-integrable system, a collection of $n$ Killing tensors
defines a St\"{a}ckel system. 
\begin{defn}
A St\"{a}ckel system on an $n-$dimensional Riemannian manifold $(M,g)$
is a set of $n$ Killing tensors of order $2$ that commute under
the commutator in (\ref{eq:commute}).
\end{defn}
Eisenhart proved in \cite{10.2307/1968433} that there is a bijective
correspondence between equivalence classes of St\"{a}ckel systems
and equivalence classes of orthogonal separable coordinate systems.
Given an orthogonal coordinate system $s_{i}$ on an $n$-dimensional
manifold $M$, we can define a St\"{a}ckel system by constructing
a St\"{a}ckel matrix.
\begin{defn}
A St\"{a}ckel matrix $\Phi$ for a given metric $g$ is any $n\times n$ matrix where each
row depends on only one of the curvilinear coordinates $s_{i}$, and
\begin{equation}
\frac{1}{g_{ii}}=\frac{\det(\Omega_{i})}{\det(\Phi)},\label{eq:stackel relationdship-1}
\end{equation}
where $\Omega_{i}$ is the minor formed by deleting the $i^{th}$
row and first column of $\Phi$ and $g_{ii}$ is the $(i,i)$ element
of the metric tensor. 
\end{defn}
The functional value of the Hamiltonian $H$ will be denoted by $h$.
Let $W\coloneqq W(\bm{s},\bm{\eta})$ where $\bm{s}=(s_{1},\dots,s_{n})$
are the separable curvilinear coordinates and $\bm{\eta}=(\eta_{0},\dots,\eta_{n-1})$
are parameters. The Hamilton Jacobi equation is given by 
\begin{equation}
  \sum_{i}\frac{1}{2}(g^{-1})_{ii}\left(\frac{\partial W}{\partial s_{i}}\right)^2=h.  \label{HJ}
\end{equation}

This can be separated by computing 
\begin{equation}
\Phi^{-1}\mathfrak{p}=\bm{\eta},\label{eq:sep constants-1}
\end{equation}

where $\mathfrak{p}=\left(p_{1}^{2},\dots,p_{n}^{2}\right)^{t}$ is
the vector of squared canonical curvilinear momenta and the parameters
$\bm{\eta}$ are the separation constants.
Comparing (\ref{eq:sep constants-1}) and (\ref{eq:Killing tensor def}),
we see that rows of $\Phi^{-1}$ encode the diagonal entries of the
Killing tensors for separable orthogonal coordinates $s_{i}$. The first row of \eqref{eq:sep constants-1} gives \eqref{HJ} and so we have $\eta_0=h$. 

The work of Kalnins and Miller \cite{Kalnins1986} gave a graphical
algorithm for constructing all orthogonally separable coordinates on constant curvature
manifolds. Recent results by Sch\"{o}bel and Veselov extended
this by giving an algebraic geometric classification of separable coordinate
systems on $S^{n}$ \cite{Schoebel2015,Schoebel2014}. In particular,
they showed that the variety of St\"{a}ckel systems on $S^{n}$ is
given by the Stasheff polytope $K_{n+1}$ which is a convex polytope
of dimension $n-1$. In this paper we work only with $S^{3}$; the
relevant Stasheff polytope $K_{4}$ is shown in Figure~\ref{fig:Stasheff S3}. 

The codimension $0$ face of a Stasheff polytope represents the family
of ellipsoidal coordinates $s_{i}$. These are defined as the roots
of $T(s)=\sum_{i=1}^{n+1}\frac{x_{i}^{2}}{s-e_{i}}=0$ where $x_{i}$
are Cartesian coordinates on $\mathbb{R}^{n+1},$ $e_{j}\le s_{j}\le e_{j+1}$
for all $j=1,\dots,n$, the $e_{i}\ge0$ are all distinct and are called the semi-major
axes. Note that when using a similar coordinate system to separate the geodesic flow on an ellipsoid these parameters actually are semi-major axes (hence the name), while here they describe a separating coordinate system but \emph{not} the underlying manifold. Solving $T(s_{j})=0$ together with $\sum_{i=1}^{n+1}x_{i}^{2}=1$
gives
\begin{equation}
    x_{i}^{2}=\frac{\prod_{j=1}^{n}(s_{j}-e_{i})}{\prod_{k\ne i}(e_{k}-e_{i})}.\label{generalcoord}
\end{equation}

Note that for $S^3$, despite the ellipsoidal coordinates being parametrised by the 4 parameters $e_1<e_2<e_3<e_4$, the Stasheff polytope is only $2$-dimensional. Affine transformation of the parameters of the form $e_i\mapsto \alpha e_i + \beta$ for $\alpha\neq0$ gives a equivalent coodinate system up to scaling. Thus each ellipsoidal coordinate system on $S^3$ with parameters $e_1<e_2<e_3<e_4$ can be transformed to an equivalent system with parameters $0<1<a<b$, see \cite{KKM18} for details.

Higher codimension faces represent families of degenerate coordinate
systems on the sphere. Degenerate coordinate systems on $S^{n}$ are
constructed by gluing ellipsoidal coordinates on lower dimensional
spheres together. Let $\bm{w}_{\alpha}$ and $\bm{z}$ be Cartesian
coordinates expressed in terms of local ellipsoidal coordinates on
$S^{k_{\alpha}-1}$ and $S^{m}$ respectively where $\alpha=1,\dots,m+1$
and $n=k_{1}+\dots k_{m+1}-1$. Then any degenerate coordinate system
on $S^{n}$ is found by recursively applying composition \cite{Schoebel2015}
\[
\begin{aligned}\circ:S^{m}\times S^{k_{1}-1}\times\dots\times S^{k_{m+1}-1} & \to S^{k_{1}+\dots+k_{m+1}-1}\\
(\bm{z},\bm{w}_{1},\dots,\bm{w}_{m+1}) & \to(z_{1}\bm{w}_{1},\dots,z_{m+1}\bm{w}_{m+1})
\end{aligned}
\]
where $z_{\beta}\bm{w}_{\beta}$ are Cartesian coordinates on $\mathbb{R}^{n+1}$
expressed in the new degenerate coordinate system. 

In this paper we have chosen to adopt the notation of Sch\"{o}bel
\cite{Schoebel2014}. The general ellipsoidal coordinates on $S^{n}$
are represented as $(1\ 2\ \dots\ n+1)$. If one attaches an $S^{j}$
to the $m^{\text{th}}$ Cartesian coordinate, then we enclose brackets
around all numbers $m$ to $m+j$. For instance, attaching $S^{1}$
to the the $x_{2}$ coordinate on $S^{2}$ is written as $(1\ (2\ 3)\ 4)$.
Symmetric bracketing results in systems with similar behaviour, i.e.
$(1\ 2\ (3\ 4))$ and $((1\ 2)\ 3\ 4)$ describe equivalent coordinate systems as we will discuss in more detail below, also see \cite{KKM18,Schoebel2016,Schoebel2015,Schoebel2014}. For the various degenerate coordinate systems we are going to use short hand names as indicated in \figone.

\begin{figure}
\begin{centering}
\includegraphics[width=8cm]{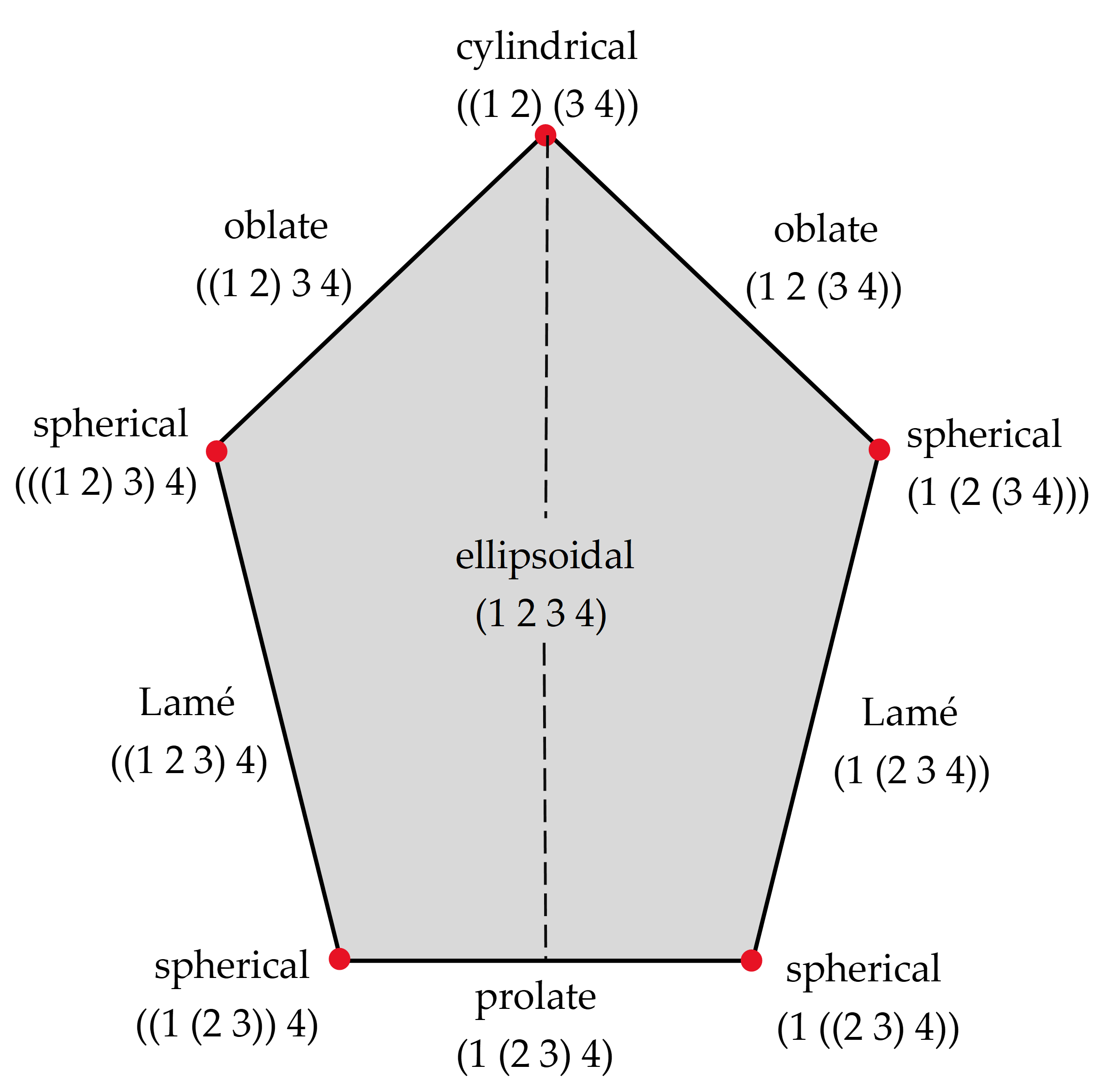}
\par\end{centering}
\caption{Stasheff polytope $K^{4}$ of separating coordinate systems on $S^{3}$ with corresponding names and bracket notation. \label{fig:Stasheff S3}}
\end{figure}

\section{Separation of variables of the Geodesic Flow on $S^{3}$ and Ellipsoidal Coordinates\label{sec:Geodesic-Flow-on-S3}}\label{sec:S3-3}

Consider the geodesic flow on the $3-$sphere as a constrained system
on $T^{*}\mathbb{R}^{4}$ with Cartesian coordinates $(\bm{x},\bm{y})$
where $\bm{x}\cdot\bm{x}=1$ and $\bm{x}\cdot\bm{y}=0$. We define
the Poisson bracket on $T^{*}S^{3}$ to be the Dirac bracket $\{\cdot,\cdot\}$
enforcing these two constraints. Let $\{\cdot,\cdot\}_{T^{*}\mathbb{R}^{4}}$
be the canonical Poisson bracket on $T^{*}\mathbb{R}^{4}$. Set $c_{1}=\bm{x}\cdot\bm{x},c_{2}=\bm{x}\cdot\bm{y}$
and define the matrix 
\[
C_{ij}=\{c_{i},c_{j}\}_{T^{*}\mathbb{R}^{4}}.
\]
The Dirac bracket on $T^{*}S^{3}$ is given by 
\[
\{f,g\}=\{f,g\}_{T^{*}\mathbb{R}^{4}}+\{f,c_{i}\}_{T^{*}\mathbb{R}^{4}}(C^{-1})_{ij}\{c_{j},g\}_{T^{*}\mathbb{R}^{4}}
\]
 with structure matrix 
\begin{equation}
B_{(\bm{x},\bm{y})}=\begin{pmatrix}\bm{0} & id-\bm{x}\bm{x}^{t}\left|\bm{x}\right|^{-2}\\
-id+\bm{x}\bm{x}^{t}\left|\bm{x}\right|^{-2} & -(\bm{x}\bm{y}^{t}-\bm{y}\bm{x}^{t})\left|\bm{x}\right|^{-2}
\end{pmatrix}.\label{eq:BPQ-1-1}
\end{equation}

From (\ref{eq:BPQ-1-1}), the Poisson bracket between two functions
$f$ and $g$ on $T^{*}S^{3}$ is
\begin{equation}
\{f,g\}\coloneqq\left(\nabla f\right)^{T}B_{(\bm{x},\bm{y})}\nabla g\label{eq:PB in QP}
\end{equation}
where $\nabla$ denotes the gradient with respect to the Cartesian
coordinates $(\bm{x},\bm{y})$. 

Let $H=\frac{1}{2}\bm{y}\cdot\bm{y}$ be the Hamiltonian of the geodesic
flow on $S^{3}$. It is well known that this system is superintegrable
and separates in multiple coordinate systems \cite{Schoebel2014}.
The Hamilton Jacobi equation can be separated in general ellipsoidal
coordinates $(s_{1},s_{2},s_{3})$ given by \eqref{generalcoord} for $n=3$ as
\begin{equation}
\begin{aligned}x_{j}^{2} & =\frac{\left(s_{1}-e_{j}\right)\left(s_{2}-e_{j}\right)\left(s_{3}-e_{j}\right)}{\Pi_{i\ne j}\left(e_{i}-e_{j}\right)}, &  & j=1,\dots,4\end{aligned}
\label{eq:def of ellipsoidal coordinates}
\end{equation}
where $0\le e_{1}\le s_{1}\le e_{2}\le s_{2}\le e_{3}\le s_{3}\le e_{4}$ and the $e_i$'s are all distinct. 
In these coordinates, the geodesic Hamiltonian is (see e.g. \cite{DAVISON20072437,KKM18})

\begin{equation}
H=-2\sum_{i=1}^{3}\frac{\prod_{j=1}^{4}(s_{i}-e_{j})}{\prod_{k\ne i}(s_{i}-s_{k})}p_{i}^{2}.\label{eq:Ham in ellipsoidal}
\end{equation}
This system is Liouville integrable. To separate the Hamilton Jacobi
equation we use the following St\"{a}ckel matrix
\begin{equation}
\Phi_{el}=\frac{1}{4}\begin{pmatrix}-\frac{s_{1}^{2}}{A(s_{1})} & -\frac{s_{1}}{A(s_{1})} & -\frac{1}{A(s_{1})}\\
-\frac{s_{2}^{2}}{A(s_{2})} & -\frac{s_{2}}{A(s_{2})} & -\frac{1}{A(s_{2})}\\
-\frac{s_{2}^{2}}{A(s_{3})} & -\frac{s_{1}}{A(s_{3})} & -\frac{1}{A(s_{3})}
\end{pmatrix}.\label{eq:Stackel El S3-1}
\end{equation}
where $A(z)=\prod_{k=1}^{4}(z-e_{k})$. From (\ref{eq:sep constants-1}),
we compute $\Phi_{el}^{-1}\mathfrak{p}=(\eta_{0},-\eta_{1},\eta_{2})^{T}$
where 
\begin{equation}
\begin{aligned}\eta_{0} & =-4\sum_{i=1}^{3}\frac{A(s_{i})}{D(s_{i})}p_{i}^{2}, &  &  & \eta_{1} & =-4\sum_{i=1}^{3}\left(\frac{A(s_{i})}{D(s_{i})}p_{i}^{2}\sum_{k\ne i}s_{k}\right), &  &  & \eta_{2} & =-4\sum_{i=1}^{3}\left(\frac{A(s_{i})}{D(s_{i})}p_{i}^{2}\prod_{k\ne i}s_{k}\right),\end{aligned}
\label{eq:separation constants}
\end{equation}
and $D(s_{i})=\prod_{k\ne i}(s_{i}-s_{k}).$ To express (\ref{eq:separation constants})
in terms of the angular momenta $\ell_{ij}\coloneqq x_{i}y_{j}-x_{j}y_{i}$,
we note that 
\[
\ell_{ij}^{2}=\frac{x_{i}^{2}x_{j}^{2}}{4}\left(\sum_{k=1}^{3}\left(-\frac{1}{s_{k}-e_{i}}+\frac{1}{s_{k}-e_{j}}\right)p_{k}\right)^{2}
\]
with $x_{i}$ given by (\ref{eq:def of ellipsoidal coordinates}).
Let $\bm{L}\coloneqq(\ell_{12},\ell_{13},\ell_{14},\ell_{23},\ell_{24},\ell_{34})$, it can be verified that 
\begin{equation}
\begin{aligned}\eta_{0}(\bm{L}) & =\sum_{i<j}\ell_{ij}^{2}=2H, &  &  & \eta_{1}(\bm{L}) & =\sum_{i<j}\left(\ell_{ij}^{2}\sum_{k\ne i,j}e_{k}\right), &  &  & \eta_{2}(\bm{L}) & =\sum_{i<j}\left(\ell_{ij}^{2}\prod_{k\ne i,j}e_{k}\right).\end{aligned}
\label{eq:Separation constants ellipsoidal}
\end{equation}
The separated equations are obtained by multiplying both sides of
(\ref{eq:sep constants-1}) by $\Phi_{el}$. This gives 
\begin{equation}
p_{i}^{2}=\frac{-R(s_{i})}{4A(s_{i})}\label{eq:psq ellipsoidal-1}
\end{equation}
where $R(z)=2hz^{2}-\eta_{1}^{*}z+\eta_{2}^{*}$ and $(\eta_{1}^{*},\eta_{2}^{*})$
are the values of the integrals $(\eta_{1},\eta_{2})$. Thus,
the geodesic flow is separable on the hyperelliptic curve $w^{2}=-R(z)A(z)$
which has genus $3$. 

It is known \cite{Moser:2261095} that a set of global polynomial
integrals for the geodesic flow are given by 
\begin{equation}
F_{i}=\sum_{j=1,j\ne i}^{4}\frac{\ell_{ij}^{2}}{e_{i}-e_{j}},\label{eq:ulenbech integral def}
\end{equation}
 where $i\in\{1,2,3,4\}.$ The $F_{i}$ are known as the Uhlenbeck
integrals and satisfy $\sum_{i=1}^{4}F_{i}=0$. The separation constants
$\eta_{1}$ and $\eta_{2}$ are related to the $F_{i}$ via the identity
\begin{equation}
\begin{aligned}\sum_{i=1}^{4}\frac{F_{i}}{z-e_{i}} & =\frac{2Hz^{2}-\eta_{1}z+\eta_{2}}{\prod_{k=1}^{4}(z-e_{k})}.\end{aligned}
\label{eq:R on A def}
\end{equation}

Since all integrals are polynomial, wee can easily show that $\eta_{1},\eta_{2}$ and $H$ are functionally
independent almost everywhere and $\{\eta_{1},H\}=\{\eta_{2},H\}=\{\eta_{1},\eta_{2}\}=0$.
This establishes that
the triple $(H,\eta_{1},\eta_{2})$ is an integrable system on $T^{*}S^{3}$.
 We call this the ellipsoidal integrable system on $T^{*}S^{3}$. 
 This is a reformulation of the underlying St\"ackel system, whose commuting Killing tensors lead to quadratic (in momenta) integrals, which are also quadratic in angular momenta.

Since $H$ is a global $S^{1}$ action on the energy surface where $2h=1$ we can use it to perform symplectic reduction. Doing so, we obtain a reduced system on the symplectic manifold $S^{2}\times S^{2}$. The integrals $(\eta_{1},\eta_{2})$ descend
to form an integrable system on this quotient space.

\section{Reduction by Geodesic Flow }\label{sec:S3-4}

The orbits of $H$ are oriented great circles on $S^{3}$. Since each
great circle is the intersection of a two dimensional plane through
the origin with $S^{3}$, the orbit space of $H$ is given by \textcolor{black}{
\[
T^{*}S^{3}/S^{1}|_{H=h}=U^{*}S^{3}/S^{1}\cong\widetilde{Gr}(2,4)
\]
}where the oriented Grassmanian $\widetilde{Gr}(2,4)$ is the set
of oriented two dimensional planes in $\mathbb{R}^{4}$ \textcolor{black}{and
$U^{*}S^{3}$ is the unit cotangent bundle of $S^{3}$}. One way to see that $\widetilde{Gr}(2,4)\cong S^{2}\times S^{2}$ is using the
Pl\"{u}cker embedding (for more details on this, see Appendix \ref{appen-gras}).
For our purposes, the reduction will be performed with invariants
of the geodesic Hamiltonian. 

Invariants of $H$ are the six angular momenta $\bm{L}\coloneqq(\ell_{12},\ell_{13},\ell_{14},\ell_{23},\ell_{24},\ell_{34})$.
These form a closed set of invariants under the Dirac bracket $\{\cdot,\cdot\}$
from (\ref{eq:PB in QP}). The Poisson algebra of these invariants has the structure matrix
\begin{equation}
B_{\bm{L}}=\begin{pmatrix}
 0 & \ell_{23} & \ell_{24} & -\ell_{13} & -\ell_{14} & 0 \\
 -\ell_{23} & 0 & \ell_{34} & \ell_{12} & 0 & -\ell_{14} \\
 -\ell_{24} & -\ell_{34} & 0 & 0 & \ell_{12} & \ell_{13} \\
 \ell_{13} & -\ell_{12} & 0 & 0 & \ell_{34} & -\ell_{24} \\
 \ell_{14} & 0 & -\ell_{12} & -\ell_{34} & 0 & \ell_{23} \\
 0 & \ell_{14} & -\ell_{13} & \ell_{24} & -\ell_{23} & 0 \\
\end{pmatrix}
\end{equation}
with 2 Casimirs: $\mathcal{C}_{1}=2H=\sum_{j>i}\ell_{ij}^{2}$ and
$\mathcal{C}_{2}=\ell_{12}\ell_{34}-\ell_{13}\ell_{24}+\ell_{14}\ell_{23}$.
The first is the energy of the geodesic flow which we have the freedom
to set to an arbitrary value $2h$. The second Casimir is the Pl\"{u}cker
relation and must be zero since the angular
momenta $\bm{L}=\bm{x}\wedge\bm{y}$ where $\wedge$ is the wedge
operator. This means that $\bm{L}$ is a totally
decomposable bivector and so must satisfy $C_{2}=\left|\bm{L}\wedge\bm{L}\right|=0$. The Lie Poisson algebra of the $\ell_{ij}$'s is isomorphic to the Lie algebra $\mathfrak{so}(4)$.

Using the $\ell_{ij}$ as new coordinates, we obtain an explicit realisation
of $S^{2}\times S^{2}$ as 
\begin{align}
\mathscr{C}_{1} & =\mathfrak{\mathcal{C}}_{1}+2\mathcal{C}_{2}=(\ell_{12}+\ell_{34})^{2}+(\ell_{13}-\ell_{24})^{2}+(\ell_{14}+\ell_{23})^{2}=2h,\label{eq:Cas in ls}\\
\mathscr{C}_{2} & =\mathcal{C}_{1}-2\mathcal{C}_{2}=(\ell_{12}-\ell_{34})^{2}+(\ell_{13}+\ell_{24})^{2}+(\ell_{14}-\ell_{23})^{2}=2h.\nonumber 
\end{align}
The Poisson bracket of functions on $S^2\times S^2$, denoted by $\{\cdot,\cdot\}_{\bm L}$
is \begin{equation}
\{f,g\}_{\bm L}=(\nabla f)^{T}B_{\bm L}\nabla g\label{eq:XY pb}
\end{equation}
where $\nabla$ denotes the gradient with respect to $\bm L$.

A sometimes more convenient set of coordinates on $S^{2}\times S^{2}$ is obtained by applying the linear transformation \linebreak
$T:\bm{L}~\mapsto(\bm{X},\bm{Y})=~(X_1,X_2,X_3,Y_1,Y_2,Y_3)$ with
\begin{equation}
\begin{aligned}X_{1} & =\frac{1}{2}(\ell_{12}+\ell_{34}), &  &  & Y_{1} & =\frac{1}{2}(\ell_{12}-\ell_{34}),\\
X_{2} & =\frac{1}{2}(\ell_{13}-\ell_{24}), &  &  & Y_{2} & =-\frac{1}{2}(\ell_{13}+\ell_{24}),\\
X_{3} & =\frac{1}{2}(\ell_{14}+\ell_{23}), &  &  & Y_{3} & =\frac{1}{2}(\ell_{14}-\ell_{23}).
\end{aligned}
\label{eq:ls to Xy}
\end{equation}
In these variables we can rewrite (\ref{eq:Cas in ls}) as $\mathscr{C}_{1}=4\left|\bm{X}\right|^{2}$
and $\mathscr{C}_{2}=4\left|\bm{Y}\right|^{2}$ which both have functional
value $2h$. The Poisson structure \eqref{eq:BPQ-1-1} becomes block diagonal
\begin{equation}
B_{\bm{X},\bm{Y}}=\begin{pmatrix}\bm{\hat{X}} & \bm{0}\\
\bm{0} & \bm{\hat{Y}}
\end{pmatrix}\label{eq:Bxy}
\end{equation}
and is isomorphic to $\mathfrak{so}(3)\times\mathfrak{so}(3)$. The
notation in (\ref{eq:Bxy}) is such that for a vector $\bm{v}\in\mathbb{R}^{3}$
the corresponding antisymmetric hat matrix $\hat{\bm{v}}$ is defined
by 
\[
\begin{aligned}\hat{\bm{v}}\bm{u}=\bm{v}\times\bm{u} &  & \forall\bm{u}\in\mathbb{R}^{3}.\end{aligned}
\]
This reduction gives an integrable system on the reduced manifold
$S^{2}\times S^{2}$. While the integrals $(\eta_{1},\eta_{2})$ can be easily rewritten in terms of the $(\bm{X},\bm{Y})$ variables using \eqref{eq:ls to Xy}, they are simplest and most symmetric as functions of the $\ell_{ij}$'s as in \eqref{eq:Separation constants ellipsoidal}.

Under (\ref{eq:XY pb}), $(\eta_{1},\eta_{2})$ are commuting quadratic
functions on $S^{2}\times S^{2}$ and so we arrive at the following
result.
\begin{thm}
The integrable sytem $(H,\eta_{1},\eta_{2})$ on $T^{*}S^{3}$
descends to an integrable system $(\eta_{1}(\bm{L}),\eta_{2}(\bm{L}),\{\cdot,\cdot\}_{\bm L})$
on $S^{2}\times S^{2}$ with two degrees of freedom and integrals quadratic in $\ell_{ij}$. 
\end{thm}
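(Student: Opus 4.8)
The plan is to realise the passage to angular momenta as a Poisson reduction, so that involutivity and functional independence of $(\eta_1,\eta_2)$ transfer from $T^*S^3$ to $S^2\times S^2$ almost for free, and then to count dimensions. Most of the structural input is already available. By \eqref{eq:Separation constants ellipsoidal} the functions $\eta_1,\eta_2$ are quadratic polynomials in the components of $\bm L=\bm x\wedge\bm y$, hence invariants of the geodesic flow, so they descend to the orbit space. The paragraph introducing the structure matrix $B_{\bm L}$ asserts exactly that the map $\bm L\colon T^*S^3\to\mathbb R^6$ is Poisson onto $(\mathbb R^6,\{\cdot,\cdot\}_{\bm L})$ with the bracket \eqref{eq:XY pb}, that its symplectic leaves are the joint level sets of the Casimirs $\mathcal C_1=2H$ and $\mathcal C_2$, and that the choice $\mathcal C_1=2h$, $\mathcal C_2=0$ produces the leaf $S^2\times S^2$ of \eqref{eq:Cas in ls}, which is the reduced phase space.

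First I would establish involutivity on the reduced space. Since $\bm L$ is a Poisson map, $\{\eta_1(\bm L),\eta_2(\bm L)\}_{\bm L}\circ\bm L=\{\eta_1,\eta_2\}=0$ on $T^*S^3$; as the restriction of $\bm L$ to the energy surface $\{H=h\}$ is the $S^1$-quotient map, a surjective submersion onto the leaf, the function $\{\eta_1(\bm L),\eta_2(\bm L)\}_{\bm L}$ vanishes identically on $S^2\times S^2$. (Equivalently, being a polynomial that vanishes on the irreducible Pl\"ucker quadric $\{\mathcal C_2=0\}$ it is a multiple of $\mathcal C_2$, hence zero on the leaf; I would avoid the direct computation with $B_{\bm L}$.)

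Next I would transfer functional independence. The three functions $\eta_1,\eta_2,H$ all factor through $\bm L$, which is a submersion onto the $5$-dimensional variety $\{\mathcal C_2=0\}$ (smooth away from the origin, hence along the leaf), so their almost-everywhere functional independence on $T^*S^3$ is equivalent to that of $\eta_1(\bm L),\eta_2(\bm L),\mathcal C_1(\bm L)$ on $\{\mathcal C_2=0\}$. Since $\mathcal C_1$ is a Casimir it is constant on the leaves, whose tangent spaces are the kernels of $d\mathcal C_1$ inside $T\{\mathcal C_2=0\}$; the usual covector argument then shows that if $d\eta_1(\bm L),d\eta_2(\bm L)$ were linearly dependent on an open subset of the generic leaf, then $d\eta_1(\bm L),d\eta_2(\bm L),d\mathcal C_1(\bm L)$ would be dependent on an open subset of $\{\mathcal C_2=0\}$ --- impossible. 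Hence $\eta_1(\bm L),\eta_2(\bm L)$ are functionally independent on a dense open subset of $S^2\times S^2$; if one prefers, this can instead be confirmed by evaluating the $2\times 6$ Jacobian of $(\eta_1(\bm L),\eta_2(\bm L))$ at one convenient point of the leaf and appealing to polynomiality.

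To finish: $S^2\times S^2$ carries the $4$-dimensional symplectic structure induced by \eqref{eq:XY pb}, and on it $\eta_1(\bm L),\eta_2(\bm L)$ are two Poisson-commuting, functionally independent functions, quadratic in the $\ell_{ij}$ by \eqref{eq:Separation constants ellipsoidal} --- which is precisely a two-degree-of-freedom integrable system. I expect the only step requiring genuine care to be this last transfer of functional independence, since involutivity and the identification of the reduced phase space are already handed to us by the preceding development; what makes the ``almost everywhere'' arguments go through cleanly is that every object involved is polynomial.
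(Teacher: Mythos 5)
Your argument is correct and follows essentially the same route as the paper, which records the theorem as an immediate consequence of the preceding development (the $\ell_{ij}$ give a Poisson map onto the $\mathfrak{so}^{*}(4)$ leaf $S^{2}\times S^{2}$, and $\eta_{1},\eta_{2}$ are quadratic invariants that commute and stay independent there); your contribution is simply to make the transfer of involutivity and independence explicit. One small caution: the fallback check via the $2\times 6$ Jacobian of $(\eta_{1}(\bm{L}),\eta_{2}(\bm{L}))$ is not by itself sufficient for independence \emph{on the leaf} --- you must verify that the differentials remain independent after restriction to the tangent space of the leaf, i.e.\ that $d\eta_{1},d\eta_{2},d\mathcal{C}_{1},d\mathcal{C}_{2}$ are jointly independent at some point --- and the covector argument needs the homogeneity of $\eta_{1},\eta_{2},\mathcal{C}_{1}$ in $\bm{L}$ to promote dependence on an open subset of a single leaf to dependence on an open cone in $\{\mathcal{C}_{2}=0\}$, since an open piece of one leaf is not open in that five-dimensional variety.
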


We call this integrable system the reduced ellipsoidal integrable system.
 The symplectic reduction performed in this section also applies to integrable systems obtained from separating the geodesic flow in the degenerate coordinate systems shown in Figure~\ref{fig:Stasheff S3}. 

Separating coordinate systems on $S^3$ are invariant under 
affine transformations $e_i\mapsto \alpha e_i + \beta $ for $\alpha\neq0$. This allows us to normalise the ordered distinct parameters $(e_1, e_2, e_3, e_4)$ to $(0, 1, a, b)$ by a shift by $\beta = -e_1$ and a scaling by $\alpha = e_2 - e_1$. Thus the inside of 
Figure~\ref{fig:Stasheff S3} can be thought of as the region $1 < a < b$. 
 The affine transformations when applied to the family of corresponding integrable systems on $T^*S^3$ gives topologically equivalent integrable systems. This property can be observed directly in the reduced systems on $S^2\times S^2$.
 \begin{lem} \label{eta lemma}
     Affine transformation of the parameters $e_i\mapsto \alpha e_i + \beta $ for $\alpha\neq0$ when applied to the reduced system $(\eta_1(\bm{L}),\eta_2(\bm{L}))$ gives a topologically equivalent integrable system.
 \end{lem}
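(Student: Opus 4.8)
The key idea is that the reduced phase space itself does not see the parameters $e_i$ at all: by \eqref{eq:Cas in ls} and \eqref{eq:XY pb}, the symplectic manifold $S^{2}\times S^{2}$ and its Poisson bracket $\{\cdot,\cdot\}_{\bm L}$ are built solely from the angular momenta $\ell_{ij}$ and the fixed energy value $h$, with no reference to the $e_i$. In particular the function $\eta_{0}(\bm L)=\sum_{i<j}\ell_{ij}^{2}=2H$ is constant and equal to $2h$ on the reduced manifold. So an affine change $e_{i}\mapsto\alpha e_{i}+\beta$ produces \emph{new} functions $\tilde\eta_{1},\tilde\eta_{2}$ on the \emph{same} manifold $S^{2}\times S^{2}$, and the plan is to show that, as a momentum map, $(\tilde\eta_{1},\tilde\eta_{2})$ differs from $(\eta_{1},\eta_{2})$ only by post-composition with an affine isomorphism of $\mathbb{R}^{2}$; the two Liouville foliations then literally coincide and the identity map of $S^2\times S^2$ is the required equivalence.

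To carry this out I would read off the transformation law directly from \eqref{eq:Separation constants ellipsoidal}. For $n=3$ the complement $\{1,2,3,4\}\setminus\{i,j\}$ of each index pair is again a pair $\{k,l\}$, so the coefficients occurring there are $e_{k}+e_{l}$ and $e_{k}e_{l}$. Under $e\mapsto\alpha e+\beta$ these become $\alpha(e_{k}+e_{l})+2\beta$ and $\alpha^{2}e_{k}e_{l}+\alpha\beta(e_{k}+e_{l})+\beta^{2}$; substituting and re-collecting the sums over pairs gives
\[
\tilde\eta_{1}=\alpha\,\eta_{1}+2\beta\,\eta_{0},\qquad
\tilde\eta_{2}=\alpha^{2}\,\eta_{2}+\alpha\beta\,\eta_{1}+\beta^{2}\,\eta_{0}.
\]
Restricting to $S^{2}\times S^{2}$, where $\eta_{0}\equiv 2h$, this reads
\[
\begin{pmatrix}\tilde\eta_{1}\\ \tilde\eta_{2}\end{pmatrix}
=\begin{pmatrix}\alpha & 0\\ \alpha\beta & \alpha^{2}\end{pmatrix}
\begin{pmatrix}\eta_{1}\\ \eta_{2}\end{pmatrix}
+\begin{pmatrix}4\beta h\\ 2\beta^{2}h\end{pmatrix},
\]
whose linear part has determinant $\alpha^{3}\neq0$. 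Hence $(\tilde\eta_{1},\tilde\eta_{2})=\Psi\circ(\eta_{1},\eta_{2})$ for an affine isomorphism $\Psi$ of $\mathbb{R}^{2}$; note that the argument is insensitive to whether $\alpha$ preserves or reverses the ordering of the $e_i$.

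Since post-composing a momentum map with a diffeomorphism of the base changes neither its fibres nor the commutation and independence of the components ($\Psi$ is invertible and $\eta_0$ is a Casimir), the functions $\tilde\eta_1,\tilde\eta_2$ again form an integrable system on $S^{2}\times S^{2}$ with the \emph{same} Liouville foliation as $(\eta_{1},\eta_{2})$: the identity of $S^{2}\times S^{2}$ is a fibrewise homeomorphism intertwining the two foliations, which is exactly topological (Liouville) equivalence, and the critical points coincide while the critical values are carried onto one another by $\Psi$. I do not expect a genuine obstacle here; the points requiring care are the bookkeeping in the expansion of the second step (making sure the $\alpha\beta$ and $\beta^{2}$ terms collect correctly into $\eta_{1}$ and $\eta_{0}$), and the observation that the translation part of $\Psi$, though nonzero because $h\neq0$, is harmless since translations of $\mathbb{R}^{2}$ are diffeomorphisms. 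It is also worth stating explicitly at the outset which notion of topological equivalence is intended.
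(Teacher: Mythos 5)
Your proposal is correct and follows essentially the same route as the paper: you derive the identical transformation law $(\eta_1,\eta_2)\mapsto(\alpha\eta_1+4\beta h,\ \alpha^2\eta_2+\alpha\beta\eta_1+2\beta^2 h)$ and conclude equivalence because it is an invertible linear map of the integrals plus affine terms involving only the Casimir. Your version simply spells out the bookkeeping and the invertibility (determinant $\alpha^3\neq 0$) more explicitly than the paper does.
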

 \begin{proof}
     Applying $e_i\mapsto \alpha e_i + \beta $ to $(\eta_1(\bm{L}),\eta_2(\bm{L}))$ induces the map
     \[(\eta_1,\eta_2)\mapsto (\alpha \eta_1+4\beta h,\alpha^2\eta_2+\alpha\beta\eta_1+2\beta^2h)\]
     which gives a topologically equivalent system, because it is a linear map of the original integrals, plus affine terms that add the Casimir $h$.
 \end{proof}

This result illustrates nicely how the equivalence of separating coordinates leads to an equivalence of reduced integrable systems. It highlights the fact that the reduced system does not have a Hamiltonian (since we reduced by the flow of $H$) and hence it is natural to consider quadratic integrals up to linear transformations.

\begin{figure}
\begin{centering}
\includegraphics[width=7cm]{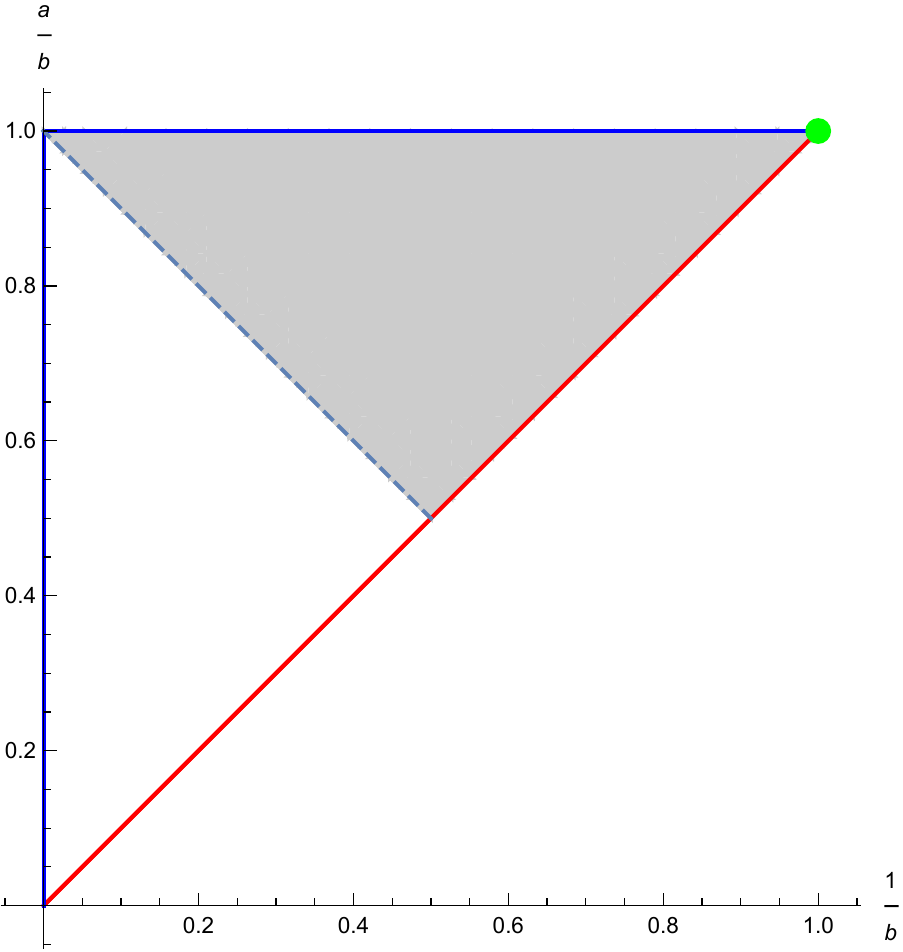}$\quad$
\includegraphics[width=7cm]{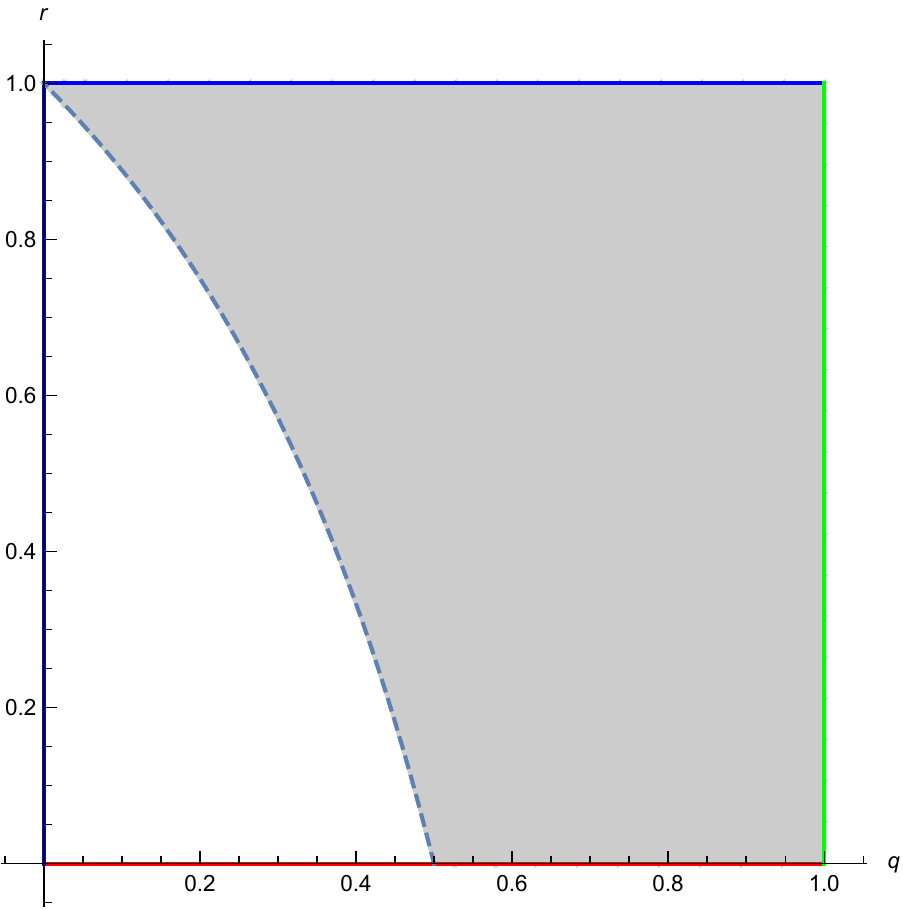}
\end{centering}
\caption{a) Parameter space of ellipsoidal coordinates $1 < a < b$ (red: prolate, blue: oblate, dashed: fixed set).
b) Blown up parameter space giving half the Stasheff polytope (green: Lam\'e). 
\label{fig:iab}}
\end{figure}

There is another equivalence between separating coordinates which maps an ordered quadruple $(e_1, e_2, e_3, e_4)$ to an ordered quadruple $(-e_4, -e_3, -e_2, -e_1)$ . After normalisation this maps $(0, 1, a, b)$ to $(0, 1, a', b')$ 
where $a' = (b-1)/(b-a)$ and $b' = b/(b-a)$. This map is an involution
that can be written as 
\begin{equation} \label{eq:involution}
\frac{1}{b'} = 1 - \frac{a}{b}, \quad
\frac{a'}{b'} = 1 - \frac{1}{b}\,.
\end{equation}
The line of fixed points of the involution is $1/b + a/b = 1$.
This suggests to map the parameter region $1 < a < b$ to the triangle
$0 < a/b < 1/b < 1$ which is cut in half by the line of fixed points, see Figure~\ref{fig:iab}~a). The prolate case $a=1$ and the oblate case $a=b$ correspond to two edges of this triangle. To see the whole parameter space the $e_i$ need to be considered projectively. In particular the point $1=a=b$ representing the Lam\'e family needs to be blown up. 

\begin{lem}
 Define $q=1/b, r = (a-1)/(b-1)$. There is a one-to-one correspondence between equivalence classes of reduced integrable system on $S^2\times S^2$ and points in the region $0 \le q \le 1$, $0 \le r \le 1$, $q \ge (1-r)/(2-r)$.
\end{lem}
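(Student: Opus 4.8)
The plan is to produce the claimed region as a double quotient of the raw parameter space: first by affine transformations $e_i\mapsto\alpha e_i+\beta$, which by Sch\"obel's theorem quoted in Section~\ref{sec:S3-2} collapses the four parameters onto the Stasheff polytope $K^4$, and then by the one remaining discrete equivalence, the order-reversing sign flip $(e_1,e_2,e_3,e_4)\mapsto(-e_4,-e_3,-e_2,-e_1)$. The affine quotient is already built into $K^4$ and acts on the reduced integrals as in Lemma~\ref{eta lemma}. For the sign flip I would first check that it descends to an equivalence of reduced systems: it is the composition of the affine map $e\mapsto -e$ with the axis relabelling $x_i\mapsto x_{5-i}\in SO(4)$, which on the angular momenta is the orthogonal transformation $g\colon \ell_{ij}\mapsto\ell_{5-i,\,5-j}$ (so $\ell_{12}\mapsto-\ell_{34}$, $\ell_{13}\mapsto-\ell_{24}$, etc.). A short computation shows $g$ preserves $B_{\bm L}$ and both Casimirs $\mathcal C_1,\mathcal C_2$ of \eqref{eq:Cas in ls}, hence is a symplectomorphism of $S^2\times S^2$, and that applying $g$ turns the reduced integrals of the flipped parameters back into $-\eta_1$ and $\eta_2$ of the original parameters. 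Thus the sign flip changes the reduced system only by a symplectomorphism and a sign on one integral, and equivalence classes of reduced systems correspond to points of $K^4/\mathbb{Z}_2$.

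Next I would coordinatise $K^4$. Using the affine freedom, normalise to $(0,1,a,b)$ with $1\le a\le b$ and set $q=1/b=(e_2-e_1)/(e_4-e_1)$, $r=(a-1)/(b-1)=(e_3-e_2)/(e_4-e_2)$; both are affine invariants valued in $[0,1]$. On the interior $1<a<b$ this is a bijection onto $0<q,r<1$, and the boundary strata are exactly those of Figure~\ref{fig:iab}: $r=0$ is the prolate edge $a=1$, $r=1$ the oblate edge $a=b$, the edge $q=1$ is the exceptional divisor resolving the Lam\'e point $a=b=1$ (parametrised by the blow-up direction $r$), and the remaining degenerations, where a Cartesian coordinate decouples ($b\to\infty$), sit on $q=0$. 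I would then run through the inventory that matches each edge and vertex of the $(q,r)$-square with the corresponding (possibly multiply degenerate) coordinate system of Figure~\ref{fig:Stasheff S3}.

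Then I would transport the sign-flip involution into $(q,r)$. Writing $a/b=q+r(1-q)$ and substituting into \eqref{eq:involution} gives $q\mapsto q^{*}\coloneqq(1-q)(1-r)$ together with the induced $r\mapsto r^{*}=(1-q)r\big/\bigl(1-(1-q)(1-r)\bigr)$. A direct computation confirms that $(q,r)\mapsto(q^{*},r^{*})$ is an involution, that its fixed-point set is $q=(1-q)(1-r)$, i.e.\ $q=(1-r)/(2-r)$ — equivalently the line $1/b+a/b=1$, with $r=r^{*}$ holding automatically there — and that this arc, running from $(q,r)=(\tfrac12,0)$ to $(0,1)$, separates the parameter square into the two regions $q\gtrless(1-r)/(2-r)$, which the involution interchanges. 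Hence $\{0\le q\le1,\ 0\le r\le1,\ q\ge(1-r)/(2-r)\}$ is a fundamental domain for $\mathbb{Z}_2$, and combined with the first paragraph this gives the asserted one-to-one correspondence.

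The routine parts are the algebra of the third paragraph and the sign/Casimir check of the first. The real obstacle is two-fold and lives at the boundary. First, one must verify that the $(q,r)$-chart (with the Lam\'e blow-up built in) faithfully reproduces all of $K^4/\mathbb{Z}_2$ with nothing lost or doubled; in particular the degenerate strata where a coordinate decouples are contracted by $(q,r)$ and must be matched correctly — they lie on $q=0$, which touches the fundamental domain only at the vertex $(0,1)$, so the contraction does no harm there. Second, for the correspondence to be genuinely one-to-one one needs that distinct points of the fundamental domain give \emph{inequivalent} reduced systems; this requires an honest invariant, supplied by the compatible-Poisson / bi-Hamiltonian structure of Section~\ref{sec:S3-5}, whose Poisson-pencil eigenvalues recover $e_1,\dots,e_4$ up to the affine group and the sign flip and therefore separate the equivalence classes.
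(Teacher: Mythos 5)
Your proposal is correct and follows essentially the same route as the paper: normalise by the affine action to $(0,1,a,b)$, pass to the blow-up coordinates $q=1/b$, $r=(a-1)/(b-1)$, and quotient the resulting square by the involution \eqref{eq:involution}, whose fixed locus $q=(1-r)/(2-r)$ cuts out the claimed fundamental domain. You go further than the paper's own (rather terse) argument in three useful places --- verifying that the order-reversing sign flip is realised by a symplectomorphism of $S^2\times S^2$ sending $(\eta_1,\eta_2)$ to $(-\eta_1,\eta_2)$, writing out the involution explicitly in $(q,r)$ and checking it is an involution with the stated fixed set, and flagging that genuine injectivity (inequivalent parameters giving inequivalent reduced systems) needs a separating invariant --- none of which invalidates the approach; they are refinements the paper leaves implicit.
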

\begin{proof}
 The map $r = (a-1)/(b-1)$ blows up the point $1=a=b$ to a line.
The parameter $r$ is the inverse of the slope of a straight line in $ab$-space through the point $a=b=1$. Due to $1 \le a \le b$ we have $0 \le r \le 1$. Consider the images of the edges of the triangle
$0 < a/b < 1/b < 1$.
The prolate line segment is mapped to $r=0$, $1/2 < q < 1$.
The oblate line segment is mapped to $r=1$, $0 < q < 1$.
The line of fixed points is mapped to $q = (1-r)/(2-r)$.
This establishes the claimed boundaries of the region.
The corners of the region are: 
\begin{itemize}
    \item $(q,r) = (0,1/2)$: symmetric prolate coordinates.
    \item $(q,r) = (0, 1)$: spherical coordinates.
    \item $(q,r) = (1,1)$: spherical coordinates.
    \item $(q,r) = (0,1)$: cylindrical coordinates.
\end{itemize}
\end{proof}
The region described in the Lemma in $(r,q)$ space is shown in Figure~\ref{fig:iab}~b).
It represents half of the Stasheff polytope \figone.
 
 In the next section, we study the reduced ellipsoidal system in depth. We find the momentum map, compute the critical points and critical values. The integrable systems arising from the degenerate coordinate systems will be covered in detail in section \ref{sec:degen}.

\section{The Reduced Ellipsoidal Integrable System}\label{sec:S3-5}

To construct and study the bifurcation diagram for the ellipsoidal
integrable system we will employ techniques from \cite{Bolsinov-Borisov} and \cite{Bolsinov-Oshemkov} using
compatible Poisson structures.

\subsection{Compatible Poisson Structures}

In this section, we will be closely following Example B in \cite{Bolsinov-Oshemkov}. Let us consider the reduced system on a symplectic leaf of $\mathfrak{so}^{*}(4)$
defined by $\mathcal{C}_{1}=\bm{L}\cdot\bm{L}=2h$ and
$\mathcal{C}_{2}=\ell_{12}\ell_{34}-\ell_{13}\ell_{24}+\ell_{14}\ell_{23}=0$.
On $\mathfrak{so}(4)$ we have the standard bracket $[X,Y]=XY-YX$ and we can
identify elements $X\in \mathfrak{so}(4)$ with elements $X^{*}\in \mathfrak{so}^{*}(4)$
via 
\begin{align*}
K(X,\cdot) & =X^{*}.
\end{align*}

Here $K$ is the Killing form defined as 
\begin{align*}
K(X,Y) & =\text{Tr}(\text{ad}_{X}\circ\text{ad}_{Y})\in\mathbb{R}
\end{align*}
where $\text{ad}_{X}=[X,\cdot]$. Explicitly, let us define $X_{ij}$ to be the $4\times4$ matrix with $1$ in the $ij^\text{th}$ position, $-1$ in the $ji^\text{th}$ position and $0$ everywhere else. This gives us a basis of $\mathfrak{so}(4)$. An element $X\in \mathfrak{so}(4)$ of the form
\[X=\begin{pmatrix} 0 & \ell_{12}&\ell_{13}&\ell_{14}\\
-\ell_{12}&0&\ell_{23}&\ell_{24}\\
-\ell_{13}&-\ell_{23}&0&\ell_{34}\\
-\ell_{14}&-\ell_{24}&\ell_{34}&0
\end{pmatrix}\] 
can be written as $X=\sum_{j>i}\ell_{ij}X_{ij}$. This allows for the further identification of $T(\mathfrak{so}^*(4))\equiv \mathfrak{so}(4)$ with $\nabla_{\bm{L}}f\leftrightarrow \sum_{i<j}\frac{\partial_f}{\partial\ell_{ij}} X_{ij} $. We can now express the integrals $\eta_1$ and $\eta_2$ as functions on $\mathfrak{so}(4)$ as 
\begin{align*}
\begin{aligned}\eta_1 & =\text{Tr} (X^t\nabla_{\bm{L}} \eta_1) &  &  & \eta_2 & =\text{Tr} (X^t\nabla_{\bm{L}} \eta_2)
\end{aligned}
\end{align*}
with $\nabla_{\bm{L}} \eta_1\coloneqq A_1=\sum_{i<j}(e_m+e_n)\ell_{ij}X_{ij}$ and $\nabla_{\bm{L}} \eta_2\coloneqq A_2=\sum_{i<j}e_me_n\ell_{ij}X_{ij}$ where $i,j,m,n$ are all distinct. Dynamics on $\mathfrak{so}^*(4)$ with Hamiltonian $\eta_1$ or $\eta_2$ can be rewritten in Lax form as 
$\dot{X}=[A_1,X]$ or $\dot{X}=[A_2,X]$. The trace of $X^2$ and $X^4$ recover the Casimirs $-2\mathcal{C}_1$ and $-4\mathcal{C}_2^2+2\mathcal{C}_1^2$, respectively.

Let $C$ be a symmetric matrix and define the Lie bracket $[X,Y]_{C}\coloneqq XCY-YCX$. This lifts to the Poisson bracket $\{\cdot,\cdot\}_{C}$ on $\mathfrak{so}^*(4)$. We can
WOLG assume that $C$ is diagonal of the form $C=\text{diag}(c_{1},c_{2},c_{3},c_{4})$.
If $C$ is invertible, then there exists an isomorphism $\alpha$
from $\mathfrak{so}(4,\mathbb{C})$ to $\mathfrak{so}(4,[\cdot,\cdot]_{C})$ defined by
$\alpha:X\mapsto C^{1/2}XC^{1/2}$. This lifts to a linear map 
\begin{align}
\gamma_C:\mathfrak{so}^{*}(4,\mathbb{C}) & \to \mathfrak{so}^{*}(4,\{\cdot,\cdot\}_{C})\label{eq:isomorphism}\\
\bm{L} & \mapsto M_C\bm{L}\nonumber 
\end{align}
 where $M_C=\text{diag}(\sqrt{c_{1}c_{2}},\sqrt{c_{1}c_{3}},\sqrt{c_{1}c_{4}},\sqrt{c_{2}c_{3}},\sqrt{c_{2}c_{4}},\sqrt{c_{3}c_{4}}).$

The Poisson matrix for $\mathfrak{so}^{*}(4,\{\cdot,\cdot\}_{C})$ in the basis
of $(\ell_{12},\ell_{13},\ell_{14},\ell_{23},\ell_{24},\ell_{34})$
is given by 
\begin{align*}
B_{C} & =\left(\begin{array}{cccccc}
0 & -c_{1}\ell_{23} & -c_{1}\ell_{24} & c_{2}\ell_{13} & c_{2}\ell_{14} & 0\\
c_{1}\ell_{23} & 0 & -c_{1}\ell_{34} & -c_{3}\ell_{12} & 0 & c_{3}\ell_{14}\\
c_{1}\ell_{24} & c_{1}\ell_{34} & 0 & 0 & -c_{4}\ell_{12} & -c_{4}\ell_{13}\\
-c_{2}\ell_{13} & c_{3}\ell_{12} & 0 & 0 & -c_{2}\ell_{34} & c_{3}\ell_{24}\\
-c_{2}\ell_{14} & 0 & c_{4}\ell_{12} & c_{2}\ell_{34} & 0 & -c_{4}\ell_{23}\\
0 & -c_{3}\ell_{14} & c_{4}\ell_{13} & -c_{3}\ell_{24} & c_{4}\ell_{23} & 0
\end{array}\right).
\end{align*}
 
In \cite{Bolsinov-Oshemkov} it is shown that the Lie bundle $\lambda[\cdot,\cdot]-[\cdot,\cdot]_{C}=[\cdot,\cdot]_{\lambda I-C}$
where $\lambda\in\mathbb{R}\cup\infty$ is still a Lie bracket on
$\mathfrak{so}(4)$. Similarly, $\lambda\{\cdot,\cdot\}-\{\cdot,\cdot\}_{C}=\{\cdot,\cdot\}_{\lambda I-C}$
is also a Poisson bracket on $\mathfrak{so}^{*}(4)$ giving us a set of compatible
Poisson structures. Following Example B in \cite{Bolsinov-Oshemkov}, we can
now study our system from the perspective of the compatible Poisson
structures $\{\cdot,\cdot\}_{\lambda I-C}$ on $\mathfrak{so}^{*}(4)$. Expanding the Casimirs in terms of the parameter $\lambda$ gives commuting integrals \cite{Bolsinov-Oshemkov}.
\begin{prop}[\cite{Bolsinov-Oshemkov}]
\label{fact1}The integrals $(I_{0},I_{1},I_{2})$ for the Poisson structure 
$(\mathfrak{so}^{*}(4),\{\cdot,\cdot\}_{\lambda I-C})$ with $C=\text{diag}(c_{1},c_{2},c_{3},c_{4})$
where $c_{i}$ are real distinct constants can be obtained from the coefficients
of the numerator of the rational function given by 
\begin{align}
\psi(\lambda) & =\text{Tr}((X(\lambda I - C)^{-1})^{2})=2\frac{I_{0}\lambda^{2}+I_{1}\lambda+I_{2}}{(\lambda-c_{1})(\lambda-c_{2})(\lambda-c_{3})(\lambda-c_{4})}\label{eq:Trace formula-1}
\end{align}
where $X\in \mathfrak{so}(4)$. They are $I_{0}=-\sum_{i,j}\ell_{ij}^{2}=-2h,\ I_{1}=\sum_{i<j}(c_{n}+c_{m})\ell_{ij}^{2},\ I_{2}=-\sum_{i<j}c_{n}c_{m}\ell_{ij}^{2}$
where the indices $m,n,i,j$ are all distinct. 
\end{prop}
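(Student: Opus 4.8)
The plan is to follow Example~B of \cite{Bolsinov-Oshemkov}, deriving everything from the general machinery of pencils of compatible Poisson structures; its central fact is that the Casimir functions of the pencil $\{\cdot,\cdot\}_{\lambda I-C}$, expanded as polynomials in $\lambda$, Poisson--commute with respect to \emph{every} bracket of the pencil. Accordingly the argument splits into three tasks: (i) identify the Casimirs of $\{\cdot,\cdot\}_{\lambda I-C}$; (ii) expand them in $\lambda$ and read off $I_0,I_1,I_2$; (iii) invoke the Lenard--Magri involutivity lemma to conclude that $(I_0,I_1,I_2)$ commute.

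For task (i), fix $\lambda\notin\{c_1,c_2,c_3,c_4\}$ so that $B\coloneqq\lambda I-C$ is invertible, and apply the isomorphism $\alpha\colon X\mapsto B^{1/2}XB^{1/2}$ (the one introduced above, now with $C$ replaced by $B$) together with its dual $\gamma_{\lambda I-C}$. Transporting the standard quadratic Casimir $Y\mapsto\operatorname{Tr}(Y^2)$ and using that $B$ is diagonal, a short computation gives that its pullback is $\operatorname{Tr}\big((X(\lambda I-C)^{-1})^2\big)=-2\sum_{i<j}\frac{\ell_{ij}^2}{(\lambda-c_i)(\lambda-c_j)}$, so this function is a Casimir of $\{\cdot,\cdot\}_{\lambda I-C}$; alternatively one checks $\{\psi(\lambda),\ell_{ij}\}_{\lambda I-C}=0$ directly from $B_C$ with each $c_k$ replaced by $c_k-\lambda$. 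Multiplying through by $\prod_{k=1}^{4}(\lambda-c_k)$ turns this into the polynomial identity $\psi(\lambda)\prod_k(\lambda-c_k)=-2\sum_{i<j}\ell_{ij}^2(\lambda-c_m)(\lambda-c_n)$, valid for all $\lambda$, where $\{m,n\}=\{1,2,3,4\}\setminus\{i,j\}$; the right-hand side is manifestly quadratic in $\lambda$.

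For task (ii), expand $(\lambda-c_m)(\lambda-c_n)=\lambda^2-(c_m+c_n)\lambda+c_mc_n$ and match coefficients against the normalisation $2(I_0\lambda^2+I_1\lambda+I_2)$. This yields at once $I_0=-\sum_{i<j}\ell_{ij}^2=-2h$, $I_1=\sum_{i<j}(c_m+c_n)\ell_{ij}^2$ and $I_2=-\sum_{i<j}c_mc_n\ell_{ij}^2$, as claimed. Note $I_0$ equals $-\mathcal{C}_1$ for the untwisted $\mathfrak{so}^*(4)$, hence is constant on every symplectic leaf; comparing with \eqref{eq:Separation constants ellipsoidal} one sees that after the substitution $c_i\mapsto e_i$ the pair $(I_1,I_2)$ reproduces the separation constants $(\eta_1,\eta_2)$ up to sign, so $\psi$ genuinely encodes the reduced ellipsoidal system.

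For task (iii), the brackets $\{\cdot,\cdot\}_{\lambda I-C}$, $\lambda\in\mathbb{R}\cup\{\infty\}$, form a pencil of pairwise compatible Poisson structures (any combination of two of them is again of the form $\{\cdot,\cdot\}_{\mu I-C}$), so the standard bi-Hamiltonian argument of \cite{Bolsinov-Oshemkov,Bolsinov-Borisov} shows the coefficients of the numerator of $\psi$ are in involution with respect to each bracket of the pencil, in particular with respect to the original one $\{\cdot,\cdot\}=\{\cdot,\cdot\}_I$ (the rescaled $\lambda\to\infty$ limit). Restricting to the leaf $\mathcal{C}_1=2h$, $\mathcal{C}_2=0$ freezes $I_0$ and leaves $I_1,I_2$ as the two commuting integrals. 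I expect task (i) to be the only genuinely non-formal point --- establishing that $\operatorname{Tr}((X(\lambda I-C)^{-1})^2)$ is exactly a Casimir (matching the dual of $\alpha$ with $\gamma_{\lambda I-C}$, and passing through the apparent poles at $\lambda=c_k$ via the polynomial identity above); tasks (ii) and (iii) are then respectively a bookkeeping expansion and a citation of the pencil/Lenard--Magri involutivity lemma.
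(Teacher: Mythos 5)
The paper offers no proof of this proposition --- it is stated as a citation to Bolsinov--Oshemkov (Example B), so there is nothing internal to compare against. Your reconstruction is correct and is exactly the argument the cited reference supplies: the identity $\operatorname{Tr}\big((X(\lambda I-C)^{-1})^2\big)=-2\sum_{i<j}\ell_{ij}^2/\big((\lambda-c_i)(\lambda-c_j)\big)$ checks out, clearing denominators and matching coefficients of $\lambda^2,\lambda,1$ against $2(I_0\lambda^2+I_1\lambda+I_2)$ gives precisely the stated $I_0,I_1,I_2$ (consistent with the paper's subsequent identification $(I_0,I_1,I_2)=(-2h,\eta_1,-\eta_2)$ for $C=E$), and the involutivity is the standard Lenard--Magri consequence of the pencil $[\cdot,\cdot]_{\lambda I-C}$ containing the original bracket $[\cdot,\cdot]_I$ as its rescaled $\lambda\to\infty$ member. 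The only cosmetic wrinkle, inherited from the paper's own text, is the direction of the isomorphism $\alpha$: for the bracket $[X,Y]_B=XBY-YBX$ it is $X\mapsto B^{-1/2}XB^{-1/2}$ that intertwines the standard bracket with $[\cdot,\cdot]_B$, and the transported quadratic Casimir is then $\operatorname{Tr}\big((XB^{-1})^2\big)$ as you use; this does not affect your conclusion.
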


Define $E=\text{diag}(e_1,e_2,e_3,e_4)$ and using $C=E$
gives us the integrals $(I_{0},I_{1},I_{2})=(-2h,\eta_{1},-\eta_{2})$
obtained from separation of variables in (\ref{eq:Separation constants ellipsoidal}).
This allows us to study the ellipsoidal integrable system on the reduced
space $\mathfrak{so}^{*}(4)$ as a system of compatible Poisson structures
and (\ref{eq:Trace formula-1}) becomes 
\begin{align}
\psi(\lambda) & =2\frac{-2h\lambda^{2}+\eta_{1}\lambda-\eta_{2}}{(\lambda-e_{1})(\lambda-e_{2})(\lambda-e_{3})(\lambda-e_{4})}.\label{eq:Trace formula-1-1}
\end{align}
This is precisely the equation for the separated momenta in (\ref{eq:psq ellipsoidal-1})
with $\psi(s_{i})=4p_{i}^{2}$.
\subsection{Critical Points}

To find critical points, consider the lift of the standard endomorphism
from $\mathfrak{so}(4,\mathbb{C})$ to $\mathfrak{so}(3)\oplus \mathfrak{so}(3)$ defined by $T:\text{\ensuremath{\bm{L}\mapsto(\bm{X},\bm{Y})}}$ given in \eqref{eq:ls to Xy}, where here $\bm{X}$ and $\bm{Y}$ are complex vectors.
We will be using the map $\gamma_{\lambda I-E}$ in (\ref{eq:isomorphism}) to construct
the map $T_{2}=\gamma_{\lambda I - E} T^{-1}$  from $[\mathfrak{so}(3)\oplus \mathfrak{so}(3)]^{*}$
to $\mathfrak{so}^{*}(4,\{\cdot,\cdot\}_{C})$. It is known from \cite{Bolsinov-Oshemkov} that the set of singular
points in $\mathfrak{so}(4,\mathbb{C})\equiv \mathfrak{so}(3)\oplus \mathfrak{so}(3)$ under the standard
bracket is given by $\{(\bm{X},0)\}\cup\{(0,\bm{Y})\}$. When the
matrix $\lambda I-E$ is invertible (that is $\lambda\neq e_{i}$), the map
$\gamma_{\lambda I - E}$ is an Poisson isomorphism between $\mathfrak{so}^{*}(4,\mathbb{C})$
and $\mathfrak{so}^{*}(4,\{\cdot,\cdot\}_{\lambda I -E})$ and so $T_{2}$ is also a Poisson
isomorphism. The set of singular points of $\mathfrak{so}^{*}(4,\{\cdot,\cdot\}_{\lambda I -E})$
is the image of the singular points of $\mathfrak{so}(4,\mathbb{C})$ under $T_{2}$,
that is $T_{2}(\{(\bm{X},0)\}\cup\{(0,\bm{Y})\})$. The set of critical
points of the ellipsoidal integrable system is precisely the set of
singular points of $\mathfrak{so}^{*}(4,\{\cdot,\cdot\}_{\lambda I -E})$ by Theorem 2 in \cite{Bolsinov-Oshemkov}. We have an analogous result:
\begin{prop}
\label{Bolsinov Theorem}An element of $\bm{L}\in \mathfrak{so}^{*}(4)$ is critical
if 

1. $\lambda\ne e_{i}$ and $\bm{L}\in\Re e(T_{2}(0,\bm{X})\cup T_{2}(\bm{\bm{Y}},0))$. 

2. $\lambda=e_{i}$ and $\bm{L}$ is such that the Poisson bracket
$\{\cdot,\cdot\}_{C}$ drops rank.
\end{prop}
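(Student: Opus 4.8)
The plan is to transport the description of singular points of a bi-Hamiltonian pencil given in Theorem~2 of \cite{Bolsinov-Oshemkov} through the explicit maps set up above. We work on the symplectic leaf $\{\mathcal{C}_1 = 2h,\ \mathcal{C}_2 = 0\}\cong S^2\times S^2$, on which $(\eta_1,\eta_2)$ is the momentum map whose critical points we seek. By the bi-Hamiltonian theory of \cite{Bolsinov-Oshemkov} (applied to the compatible pencil $\{\cdot,\cdot\}_{\lambda I - E}=\lambda\{\cdot,\cdot\}-\{\cdot,\cdot\}_{E}$ constructed in Section~\ref{sec:S3-5}), a point is critical for this momentum map precisely when it is a \emph{singular point of the pencil}, i.e.\ a point where the rank of $B_{\lambda I - E}$ drops below its generic value $4$ for some value $\lambda\in\mathbb{R}\cup\{\infty\}$. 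Hence the proof reduces to enumerating these rank-dropping points, and the enumeration splits naturally according to whether $\lambda I - E$ is invertible.

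First I would treat the generic values $\lambda\neq e_i$. Here $\lambda I - E=\mathrm{diag}(\lambda-e_1,\dots,\lambda-e_4)$ is invertible, so by the discussion following \eqref{eq:isomorphism} the lifted map $\gamma_{\lambda I - E}$ is a Poisson isomorphism from $(\mathfrak{so}^*(4,\mathbb{C}),\{\cdot,\cdot\})$ onto $(\mathfrak{so}^*(4,\mathbb{C}),\{\cdot,\cdot\}_{\lambda I - E})$, and composing with the splitting $T^{-1}$ gives the Poisson isomorphism $T_2=\gamma_{\lambda I - E}\,T^{-1}$ from $[\mathfrak{so}(3)\oplus\mathfrak{so}(3)]^*$ onto $\mathfrak{so}^*(4,\{\cdot,\cdot\}_{\lambda I - E})$. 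A Poisson isomorphism carries rank-dropping points to rank-dropping points, and the singular set of $\mathfrak{so}(3)\oplus\mathfrak{so}(3)$ (each $\mathfrak{so}(3)^*$ factor has generic rank $2$ and is singular only at the origin) is $\{(\bm{X},0)\}\cup\{(0,\bm{Y})\}$, as quoted from \cite{Bolsinov-Oshemkov}. Therefore the singular set of $\{\cdot,\cdot\}_{\lambda I - E}$ is $T_2\big(\{(\bm{X},0)\}\cup\{(0,\bm{Y})\}\big)$, and intersecting with the real form $\mathfrak{so}^*(4)\subset\mathfrak{so}^*(4,\mathbb{C})$ — i.e.\ retaining those complex preimages whose $T_2$-image is real — yields exactly condition~1. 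The remaining value $\lambda=\infty$ corresponds to the original bracket $\{\cdot,\cdot\}$ and contributes no singular points beyond the generic leaf itself, so it can be dispensed with quickly.

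Next I would handle the exceptional values $\lambda = e_i$. There $\lambda I - E$ has a zero diagonal entry, $\gamma_{\lambda I - E}$ fails to be an isomorphism, and the Poisson tensor $\{\cdot,\cdot\}_{e_i I - E}$ is degenerate along a locus strictly larger than the generic symplectic leaves; these are by definition the points at which $B_{e_i I - E}$ drops rank, which is precisely condition~2. Taking the union of the loci found in the two cases over all admissible $\lambda$ then exhausts the critical set of the ellipsoidal integrable system, by Theorem~2 of \cite{Bolsinov-Oshemkov}.

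The step I expect to be the main obstacle is the reality bookkeeping in case~1. The map $\gamma_{\lambda I - E}$ is built from $(\lambda I - E)^{1/2}$, whose entries are real or purely imaginary according to the sign of $\lambda-e_i$, so for $\lambda$ in each interval $(e_i,e_{i+1})$ a different subset of the coordinates of $M_{\lambda I - E}\bm{L}$ acquires a factor of $i$. One must therefore determine carefully, interval by interval, for which complex $(\bm{X},0)$ (respectively $(0,\bm{Y})$) the point $T_2(\bm{X},0)$ actually lies in the real leaf, and check that the resulting real critical loci, as $\lambda$ ranges over all admissible values, sweep out the whole critical set without omission. By contrast, the structural inputs — that the singular set of $\mathfrak{so}(3)\oplus\mathfrak{so}(3)$ is $\{(\bm{X},0)\}\cup\{(0,\bm{Y})\}$, and that ``singular point of the pencil'' coincides with ``critical point of $(\eta_1,\eta_2)$'' — are imported verbatim from \cite{Bolsinov-Oshemkov}.
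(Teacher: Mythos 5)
Your proposal is correct and follows essentially the same route as the paper: critical points of $(\eta_1,\eta_2)$ are identified with singular points of the pencil $\{\cdot,\cdot\}_{\lambda I-E}$ via Theorem~2 of \cite{Bolsinov-Oshemkov}, and for $\lambda\neq e_i$ these are obtained by transporting the singular set $\{(\bm{X},0)\}\cup\{(0,\bm{Y})\}$ of $\mathfrak{so}(3)\oplus\mathfrak{so}(3)$ through the Poisson isomorphism $T_2=\gamma_{\lambda I-E}T^{-1}$ and taking real parts, while for $\lambda=e_i$ one takes the rank-drop locus of the degenerate bracket directly. The reality bookkeeping you flag is indeed where the paper spends its subsequent effort (the explicit parametrisation \eqref{eq:lcurcrit} restricted to $\lambda\in[e_2,e_3]$), but it belongs to the application of the proposition rather than to its proof.
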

Using Proposition~\ref{Bolsinov Theorem} we start by finding the general
solutions for case 1 with $\lambda\ne e_{i}$. The critical points are given
by $\bm{L}\in\Re e(T_{2}(0,\bm{Y})\cup T_{2}(\bm{\bm{X}},0))$.
Let $\bm{z}=(z_1,z_2,z_3) \in \mathbb{C}^3$, define $\text{Sing}_{+}=\{(\bm{\bm{z}},0)\},\ \text{Sing}_{-}=\{(0,\bm{\bm{z}})$\}
and $\text{Sing}=\text{Sing}_{+}\cup\text{Sing}_{-}$. 
We have 
\begin{align}
T_{2}(\lambda)(\text{Sing}_{\pm})= & \left(\frac{z_{1}\sqrt{\lambda-e_{1}}\sqrt{\lambda-e_{2}}}{\sqrt{2}},\pm\frac{z_{2}\sqrt{\lambda-e_{1}}\sqrt{\lambda-e_{3}}}{\sqrt{2}},\frac{z_{3}\sqrt{\lambda-e_{1}}\sqrt{\lambda-e_{4}}}{\sqrt{2}},\right.\label{eq:t2sing}\\
 & \left.\pm\frac{z_{3}\sqrt{\lambda-e_{2}}\sqrt{\lambda-e_{3}}}{\sqrt{2}},-\frac{z_{2}\sqrt{\lambda-e_{2}}\sqrt{\lambda-e_{4}}}{\sqrt{2}},\pm\frac{z_{1}\sqrt{\lambda-e_{3}}\sqrt{\lambda-e_{4}}}{\sqrt{2}}\right),\nonumber 
\end{align}
where $z_{j}=a_{j}+ib_{j}$ with $a_i ,b_i \in \mathbb{R}$. Substituting these into the integrals gives $(I_{1},I_{2})=(4h\lambda,2h\lambda^{2})$
which is the curve $I_{2}=\frac{I_{1}^{2}}{8h}$ . This is precisely when $\lambda$ is a double root of \eqref{eq:Trace formula-1-1}. It is easily seen that the values of $\lambda$ that permit a double root
while keeping $\psi>0$ are in the interval $\lambda\in[e_{2},e_{3}]$.
After taking the real part this gives the critical points
\begin{equation}
\begin{aligned}\bm{L}_{\pm}= & \left(\frac{a_{1}\sqrt{\lambda-e_{1}}\sqrt{\lambda-e_{2}}}{\sqrt{2}},\mp\frac{b_{2}\sqrt{\lambda-e_{1}}\sqrt{e_{3}-\lambda}}{\sqrt{2}},-\frac{b_{3}\sqrt{\lambda-e_{1}}\sqrt{e_{4}-\lambda}}{\sqrt{2}},\right.\\
 & \left.\mp\frac{b_{3}\sqrt{\lambda-e_{2}}\sqrt{e_{3}-\lambda}}{\sqrt{2}},\frac{b_{2}\sqrt{\lambda-e_{2}}\sqrt{e_{4}-\lambda}}{\sqrt{2}},\mp\frac{a_{1}\sqrt{e_{3}-\lambda}\sqrt{e_{4}-\lambda}}{\sqrt{2}}\right).
\end{aligned}
\label{eq:lcurcrit}
\end{equation}
We can verify that these are critical points of the system
$(\eta_{1},\eta_{2})$ by noting that $B(\nabla\eta_{0}-\lambda\nabla\eta_{1}-\nabla\eta_{2})|_{\bm{L=L_{\pm}}}=0$.
Substituting (\ref{eq:lcurcrit}) into the Pl\"ucker relation forces
\begin{equation}
    a_{1}^{2}=b_{2}^{2}+b_{3}^{2}.\label{plu-el}
\end{equation}
Using $\bm{L\cdot L}=2h$ gives
the conic
\begin{equation}
 (e_{1}e_{2}-e_{1}e_{3}-e_{2}e_{4}+e_{3}e_{4})b_{2}^{2}+(e_{1}e_{2}-e_{2}e_{3}-e_{1}e_{4}+e_{3}e_{4})b_{3}^{2}=4h.\label{ham-el}   
\end{equation}
 The conditions \eqref{plu-el} and \eqref{ham-el}
when combined with (\ref{eq:lcurcrit}) gives the explicit parametrisation
for the 4 topological $S^{1}$ of critical points for case 1 with $\lambda\neq e_i$.

In case 2 where $\lambda=e_{i}$, the map $\gamma_{\lambda I - E}$ still exists
but it is not invertible and so $T_{2}(e_{i})(\text{Sing})$ is still a subset of
the critical points of the bracket $\{\cdot,\cdot\}_{C(e_{i})}$.
Let us consider $\lambda =e_1$, then  we have
\begin{align*}
T_{2}(e_{1})(\text{Sing}_{\pm}) & =\left(0,0,0,\pm\frac{z_{3}\sqrt{e_{1}-e_{2}}\sqrt{e_{1}-e_{3}}}{\sqrt{2}},-\frac{z_{2}\sqrt{e_{1}-e_{2}}\sqrt{e_{1}-e_{4}}}{\sqrt{2}},\pm\frac{z_{1}\sqrt{e_{1}-e_{3}}\sqrt{e_{1}-e_{4}}}{\sqrt{2}}\right).
\end{align*}

These naturally satisfies the Pl\"{u}cker relations giving us solutions
of the form $\bm{L}=(0,0,0,\ell_{23},\ell_{24},\ell_{34})$ after taking the real part with
constraint $\ell_{23}^{2}+\ell_{24}^{2}+\ell_{34}^{2}=2h$. This means
that the set of all critical points corresponding to case 2 with $\lambda=e_1$
is the sphere $\ell_{23}^{2}+\ell_{24}^{2}+\ell_{34}^{2}=2h$.

In order to show these are all the critical points for $\lambda=e_1$, recall that the singular points of $\{\cdot , \cdot\}_{C(e_i)}$ occurs when
$B_{C(e_{i})}$ drops rank. We perform the change of variables $(\bm{U},\bm{V})$
where $\bm{U}=(u_{12},u_{13,},u_{14})$ and $\bm{V}=(v_{34},v_{24},v_{23})$
with $u_{ij}=\sqrt{(e_{i}-e_{k})(e_{i}-e_{m})}\ell_{ij}$ , $v_{km}=\frac{1}{\sqrt{(e_{i}-e_{k})(e_{i}-e_{m})}}\ell_{km}$
and $i,j,k,m$ are all distinct. This transforms $B_{e_1 I - E}$ into
the standard $\mathfrak{e}^{*}(3)$ algebra given by 
\begin{align*}
B_{3} & =\begin{pmatrix}\bm{0} & \bm{\hat{U}}\\
\bm{\hat{U}} & \bm{\hat{V}}
\end{pmatrix}.
\end{align*}
Singular orbits of $B_{3}$ are given by $\bm{U}=\bm{0}$, giving
$\ell_{12}=\ell_{13}=\ell_{14}=0$ for all $j\ne i$. These are the critical points described
above.

For $\lambda=e_{2},e_{3},e_{4}$ we have an isomorphism between
$B_{\lambda I - E}$ and $\mathfrak{e}^{*}(1,2),\ \mathfrak{e}^{*}(2,1),\ \mathfrak{e}^{*}(0,3)$ respectively,
all of which have singular orbits iff $\bm{U}=\bm{0}$ giving $\ell_{ik}=0$
for all $k\neq i$ if $\lambda=e_{i}$.

\subsection{Bifurcation diagram}
Using the critical points described in the previous section we have the following result for the critical values.
\begin{cor}
\label{biham-critical-values}The critical values of the integrals
$(\eta_1,\eta_2)$ occur when $\lambda$ 
 is a  real root of
$\psi(\lambda)$ so that $\psi(\lambda)\geq0$ and 
\begin{enumerate}
\item $\lambda=e_{i}$ or
\item $\lambda$ is a double root of the numerator $\psi(\lambda)\coloneqq 2h\lambda^{2}-\eta_1\lambda+\eta_{2}$ 
\end{enumerate}
\end{cor}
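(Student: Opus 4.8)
The plan is to assemble Corollary~\ref{biham-critical-values} directly from the three cases of critical points computed in the previous subsection, translating each into a statement about the roots of the numerator $\psi(\lambda) = 2h\lambda^2 - \eta_1\lambda + \eta_2$. Recall from \eqref{eq:Trace formula-1-1} and the identification $\psi(s_i) = 4p_i^2$ that the function $\psi(\lambda)$ governs the separated momenta, so the constraint ``$\psi(\lambda) \ge 0$'' is simply the requirement that the separating coordinate value $\lambda$ is physically realised on the real phase space; this is why it appears in both cases of the statement.

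First I would handle case 1 (the $\lambda \ne e_i$ critical points). By Proposition~\ref{Bolsinov Theorem} these are the points $\bm L \in \Re e(T_2(0,\bm z) \cup T_2(\bm z, 0))$, and the computation following \eqref{eq:t2sing} showed that substituting these into the integrals gives $(I_1, I_2) = (4h\lambda, 2h\lambda^2)$, i.e. exactly $\eta_1 = 4h\lambda$, $\eta_2 = 2h\lambda^2$. Plugging these values into the numerator gives $2h\lambda'^2 - 4h\lambda\,\lambda' + 2h\lambda^2 = 2h(\lambda' - \lambda)^2$, which vanishes to second order at $\lambda' = \lambda$; hence $\lambda$ is a double root of $\psi$. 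Conversely, if $\lambda$ is a double root of the numerator then matching coefficients forces $\eta_1 = 4h\lambda$, $\eta_2 = 2h\lambda^2$, which is exactly the image of the critical set. This establishes item (2), and the interval restriction $\lambda \in [e_2, e_3]$ observed earlier is precisely the sign condition $\psi \ge 0$ (a double root outside $[e_2,e_3]$ would make $\psi$ negative between consecutive $e_i$'s).

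Next I would handle case 2 ($\lambda = e_i$). The previous subsection showed that for each $i$ the critical set is the sphere $\{\ell_{ij} = 0 \ \forall j \ne i\} \cap \{\bm L \cdot \bm L = 2h\}$, obtained as the singular orbit of $B_{e_i I - E}$. On such a point, one reads off from \eqref{eq:Separation constants ellipsoidal} that $\eta_1 = \sum_{k<l}(\sum_{p \ne k,l} e_p)\ell_{kl}^2$ and $\eta_2 = \sum_{k<l}(\prod_{p\ne k,l}e_p)\ell_{kl}^2$ with the sum restricted to pairs not containing $i$; a short computation shows the resulting $(\eta_1, \eta_2)$ lie on the line obtained by evaluating $\psi(e_i) = 0$, i.e. $2h e_i^2 - \eta_1 e_i + \eta_2 = 0$. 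So $\lambda = e_i$ is a root (generically simple) of the numerator, giving item (1). The condition $\psi(e_i) \ge 0$ near the root is automatic here since $\psi$ vanishes at $e_i$, but it must be coupled with the requirement that the point actually lies on the relevant leaf, which is what selects which $e_i$ contribute to the boundary.

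Finally I would note that these are \emph{all} critical values: Proposition~\ref{Bolsinov Theorem} exhausts the critical points (cases 1 and 2 are the full singular set of the pencil of brackets by Theorem 2 of \cite{Bolsinov-Oshemkov}), and we have shown each maps to a value where $\psi$ has the stated root structure, so the image of the critical set is contained in the union described; the converse containment follows by reversing the coefficient-matching in each case. The main obstacle I anticipate is the bookkeeping in case 2: verifying cleanly that the restricted sums in \eqref{eq:Separation constants ellipsoidal} on the sphere $\ell_{ij}=0$ collapse to exactly the linear relation $\psi(e_i)=0$, and checking that the $\psi \ge 0$ sign condition correctly picks out which roots $e_i$ (and which double roots $\lambda \in [e_2,e_3]$) correspond to genuinely attained critical values rather than spurious solutions of the algebraic equations. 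Everything else is a direct translation of the already-established critical point computations into the language of the roots of $\psi$.
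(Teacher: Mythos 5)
Your proposal is correct and follows essentially the same route as the paper: both arguments take the critical-point classification of Proposition~\ref{Bolsinov Theorem} as given, and translate the two cases ($\lambda=e_i$ versus $(I_1,I_2)=(4h\lambda,2h\lambda^2)$) into the statement that the numerator of $\psi$ vanishes at $e_i$ or has a double root, with $\psi\ge 0$ interpreted as the reality condition $\psi(s_i)=4p_i^2\ge 0$. The only difference is cosmetic: where the paper invokes the trace--determinant identity $\operatorname{Tr}(M^2)-(\operatorname{Tr}M)^2+2\det M=0$ together with $\det M=0$ (from the Pl\"ucker Casimir) to argue that $\lambda$ must be a root of $\psi$, you obtain the same conclusion by substituting the explicit critical points into \eqref{eq:Separation constants ellipsoidal} and matching coefficients, which is if anything more explicit.
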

\begin{proof}
By the Cayley Hamilton theorem, it is known that 
\begin{align*}
\text{Tr}(M^{2})-(\text{Tr}(M))^{2}+2\det(M) & =0.
\end{align*}
We observe that if $M=XC^{-1}$, $\text{Tr}(M)=0$ and $\det(M)=\frac{(X_{12}X_{23}-X_{13}X_{24}+X_{12}X_{34})^{2}}{(\lambda-c_{1})(\lambda-c_{2})(\lambda-c_{3})(\lambda-c_{4})}=\frac{\mathcal{C}_{2}^{2}}{(\lambda-c_{1})(\lambda-c_{2})(\lambda-c_{3})(\lambda-c_{4})}=0$
due to constraint on the Casimir. This implies that $\lambda$ has
to be a root of $\psi(\lambda)$ for valid motion. For critical points, we either have $\lambda=e_i$ or $\lambda$ such that $(I_{1},I_{2})=(4h\lambda,2h\lambda^{2})$, that is $\lambda$ is a double root of $\psi(\lambda)$.
\end{proof}
\begin{prop}
\label{The-bifurcation-diagram Lemma}The set of critical values for
the reduced ellipsoidal integrable system $(\eta_1,\eta_2):S^2\times S^2 \to \mathbb{R}^2$ is composed of 4 straight lines and a quadratic curve. The lines
are $\mathcal{L}_{i}:\eta_{2}-e_{i}(\eta_{1}-e_{i})=0$ for $i\in\{1,2,3,4\}$
and part of the parabola $\eta_{2}=\frac{\eta_{1}^{2}}{4}$ given
by $\mathcal{C}:(\eta_{1},\eta_{2})=\left(2t,t^2\right)$ for $e_{2}\le t\le e_{3}$.
There are $6$ transverse intersections of the lines $\mathcal{L}_{i}\cap\mathcal{L}_{j}$
which occur at $(\eta_{1},\eta_{2})=d_{ij}\coloneqq(e_{i}+e_{j},e_{i}e_{j})$
where $i\ne j$ and $i,j\in\{1,2,3,4\}$. The points $d_{i}\coloneqq (2e_i,e_i^2)$
where $i\in\{2,3\}$ correspond to the two tangential intersections
of $\mathcal{L}_{2}$ and $\mathcal{L}_{3}$ with $\mathcal{C}$. The bifurcation diagram with $2h=1$
is shown in Figure \ref{fig:root diagram and mm ellipsoidal} b). 
\end{prop}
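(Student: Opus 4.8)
The strategy is to combine the explicit critical-point computations already carried out (cases $\lambda\neq e_i$ and $\lambda=e_i$) with the characterization of critical values in Corollary~\ref{biham-critical-values}, simply substituting each family of critical points into $(\eta_1,\eta_2)$ and reading off the resulting loci. First I would handle the lines $\mathcal{L}_i$. For $\lambda=e_i$ the critical set is the sphere $\sum_{j,k\neq i}\ell_{jk}^2=2h$ (worked out above for $e_1$, and by the $\mathfrak{e}^*$-algebra argument for $e_2,e_3,e_4$). On such a point, $\psi(\lambda)$ has $\lambda=e_i$ as a root, i.e.\ the numerator $2h\lambda^2-\eta_1\lambda+\eta_2$ vanishes at $\lambda=e_i$; with $2h=1$ this is exactly $e_i^2-e_i\eta_1+\eta_2=0$, i.e.\ $\eta_2-e_i(\eta_1-e_i)=0$, which is $\mathcal{L}_i$. (One should also note the constraint $\psi\geq 0$ is automatically met since the other three roots govern the remaining $p_j^2$, but on these critical fibres the relevant positivity is inherited from the residue structure of \eqref{eq:R on A def}.) For the parabola, substituting the case-1 critical points into the integrals was already shown to give $(\eta_1,\eta_2)=(4h\lambda,2h\lambda^2)=(2\lambda,\lambda^2)$, a parametrization of $\eta_2=\eta_1^2/4$; and the admissibility interval for a positive double root was identified as $\lambda\in[e_2,e_3]$, so the curve $\mathcal{C}$ is exactly the arc $(2t,t^2)$, $e_2\le t\le e_3$.

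Next I would compute the intersection points. Two lines $\mathcal{L}_i$ and $\mathcal{L}_j$ meet where $e_i^2-e_i\eta_1+\eta_2=0=e_j^2-e_j\eta_1+\eta_2$; subtracting gives $(e_i-e_j)\eta_1=e_i^2-e_j^2$, so $\eta_1=e_i+e_j$ and then $\eta_2=e_ie_j$, i.e.\ $d_{ij}=(e_i+e_j,e_ie_j)$. Since the $e_i$ are distinct and there are $\binom{4}{2}=6$ pairs, these are $6$ distinct points, and transversality follows because the lines $\mathcal{L}_i$ have pairwise distinct slopes $e_i$. For the tangencies, note that $\mathcal{L}_i$ is tangent to the parabola $\eta_2=\eta_1^2/4$ precisely where the quadratic $2h\lambda^2-\eta_1\lambda+\eta_2$ has $\lambda=e_i$ as a \emph{double} root, which forces $\eta_1=4he_i=2e_i$ and $\eta_2=2he_i^2=e_i^2$, i.e.\ the point $d_i=(2e_i,e_i^2)$; this lies on the arc $\mathcal{C}$ iff $e_i\in[e_2,e_3]$, i.e.\ iff $i\in\{2,3\}$, giving exactly the two tangential intersections claimed. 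That $\mathcal{L}_2$ and $\mathcal{L}_3$ (and no others) touch $\mathcal{C}$ is consistent with the interval endpoints of the double-root locus.

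The remaining point is to argue completeness: that these loci, and nothing else, comprise the critical value set. This is where I expect the only real subtlety. One direction is automatic — every critical point falls, by the classification in the previous subsection, into case 1 or case 2, and we have just computed the image of each. For the converse (every point of $\mathcal{L}_i$ restricted appropriately, and every point of the arc $\mathcal{C}$, is actually attained as a critical value on $S^2\times S^2$, not merely on the complexified model) one must check that the real critical points constructed in \eqref{eq:lcurcrit}–\eqref{ham-el} and the spheres $\sum\ell_{jk}^2=2h$ are nonempty and sweep out the full claimed parameter ranges. The parabolic arc requires verifying that the conic \eqref{ham-el} has real solutions $(b_2,b_3)$ together with \eqref{plu-el} for every $\lambda\in[e_2,e_3]$ — a sign analysis of the two coefficients $(e_1e_2-e_1e_3-e_2e_4+e_3e_4)$ and $(e_1e_2-e_2e_3-e_1e_4+e_3e_4)$ under the ordering $e_1<e_2<e_3<e_4$, which one checks factors favorably (e.g.\ the first equals $(e_3-e_1)(e_4-e_2)>0$ up to sign bookkeeping). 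For the lines $\mathcal{L}_i$, the critical sphere is visibly nonempty, and the image is the single point? — no: one must observe that on the $\lambda=e_i$ critical sphere the \emph{other} two roots of $\psi$ vary, so $(\eta_1,\eta_2)$ traces out a whole segment of $\mathcal{L}_i$, not a point; its extent is cut out by the requirement that the remaining roots stay in the allowed intervals $[e_k,e_{k+1}]$, which is what makes the $\mathcal{L}_i$ appear as line \emph{segments} bounded by the $d_{ij}$ in Figure~\ref{fig:root diagram and mm ellipsoidal}~b). Assembling these pieces, together with Corollary~\ref{biham-critical-values} for the ``only if'' direction, yields the stated description of the bifurcation diagram.
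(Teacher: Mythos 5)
Your proposal is correct and follows essentially the same route as the paper: Corollary~\ref{biham-critical-values} plus the explicit critical-point families give the loci $\mathcal{L}_i$ and $\mathcal{C}$, elementary algebra gives the intersection points $d_{ij}$ and the tangencies $d_2,d_3$, and a positivity analysis fixes the admissible segments. The only cosmetic difference is that the paper packages the ``extent/completeness'' step as the root diagram (requiring $p_i^2\ge 0$ on each interval $[e_i,e_{i+1}]$, which also yields the four chambers), whereas you argue it by checking realness of the parametrized critical families directly, e.g.\ via the sign of the coefficients in \eqref{ham-el}.
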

\begin{proof}
Using Proposition \ref{fact1} and Corollary \ref{biham-critical-values}, when $\lambda=e_{i}$
in (\ref{eq:Trace formula-1-1}) we must have $\eta_{2}=e_{i}(\eta_{1}-2he_{i})$
for the numerator of $\psi(\lambda)$ to be identically $0$ . If
$\lambda$ is a double root, then taking the discriminant of the numerator
gives the curve $\eta_{2}=\frac{\eta_{1}^{2}}{8h}$. With $2h=1$, we obtain
the formulae for the lines $\mathcal{L}_{i}$ and the curve $\mathcal{C}$
which make up the boundary of the image of the momentum map. Since the bifurcation
diagram is necessarily compact we must also determine the regions for which the momenta are real. To do this, recall that $\psi(s_{i})=4p_{i}^{2}$ can be
factored as follows 
\begin{equation}
\psi(s_{i})=4p_{i}^{2}=-2\frac{(s_{i}-r_{1})(s_{i}-r_{2})}{(z-e_{1})(z-e_{2})(z-e_{3})(z-e_{4})},\label{eq:psq new}
\end{equation}
 where $e_{1}\le r_{1}\le r_{2}\le e_{4}$, $\eta_{1}=r_{1}+r_{2}$
and $\eta_{2}=r_{1}r_{2}.$ The denominator of (\ref{eq:psq new})
defines $4$ poles at $e_{j}$ and so divides the interval $[e_{1},e_{4}]$
into three intervals $[e_{i},e_{i+1}]$ where $i\in\{1,2,3\}$. To
distribute the roots $r_{k},$ we require that $p_{i}^{2}$ takes
on non negative values in each interval $[e_{i},e_{i+1}]$ for valid
motion. This gives $4$ regions of motion which we represent in Figure
\ref{fig:root diagram and mm ellipsoidal} a). We call this the
root diagram for the ellipsoidal system. The mapping from the root
diagram to the bifurcation diagram is smooth on the interior and all edges
of the root diagrams except on the diagonal cyan segment where $r_1=r_2$. The image of
the momentum map is the region enclosed by the lines $\mathcal{L}_{i}$
and the curve $\mathcal{C}$ presented in Figure~\ref{fig:root diagram and mm ellipsoidal}~b).

From the root diagram, we find that each of the lines $\mathcal{L}_{i}$
is defined over $\eta_{1}\in[e_{1}+e_{j},e_{4}+e_{k}],\eta_{2}\in[e_{1}e_{j},e_{4}e_{k}]$
where $j=\max(2,i)$ and $k=\min(3,i)$. Hence the end points of $\mathcal{L}_i$ are $d_{ij}$ and $d_{k4}=d_{4k}$. The critical points on each line $\lambda=e_{i}$ described in the previous section represents the geodeosic subflow on the great $2$-sphere $x_{i}=0$ under
elliptical coordinates on $S^2$ with axes given by the remaining $e_{k}$ with
$k\neq i$. Consider the case with $\lambda=e_1$, for each critical value $(\eta_{1},\eta_{2})$ on the line $\mathcal{L}_1$, its set of
critical points is the intersection of the sphere $\ell_{23}^{2}+\ell_{24}^{2}+\ell_{34}^{2}=2h$ with the ellipsoids
$\eta_{2}(\bm{L})=e_{1}(e_{4}\ell_{23}^{2}+e_{3}\ell_{24}^{2}+e_{2}\ell_{34}^{2})=\eta_{2}$.
These are precisely the fibres of the geodesic flow on
$S^{2}$ when separation of variables is performed in the elliptical
coordinates on $S^2$ with semi-axes $(e_{2},e_{3},e_{4})$ (see Appendix \ref{s2ellip}). Indeed, when $\ell_{12}=\ell_{13}=\ell_{14}=0$,
we have $x_{1}=y_{1}=0$ and we have geodesic motion restricted on
the great 2-sphere $x_{1}=0$.

In the case where there is a double root in the numerator, i.e.
$t=r_{1}=r_{2}$, we obtain the curve $\mathcal{C}:(\eta_{1},\eta_{2})=(2t,t^{2})$
where $e_{2}\le t\le e_{3}$.

It is clear that the intersections between $\mathcal{L}_{i}$ and
$\mathcal{L}_{j}$ are transverse and are located at $(\eta_{1},\eta_{2})=(e_{i}+e_{j},e_{i}e_{j})=d_{ij}$
where $i\ne j$ and $i,j\in\{1,2,3,4\}$. Similarly, it is easy to
see by computing the tangents that only $\mathcal{L}_{2}$ and $\mathcal{L}_{3}$
intersect $\mathcal{C}$ tangentially at $d_2=(2e_{2},e_{2}^{2})$ and
$d_3=(2e_{3},e_{3}^{2})$ respectively. 
\end{proof}
\begin{cor}
The Uhlenbeck integral $F_{i}=0$ if and only if $\eta_{2}-e_{i}(\eta_{1}-e_{i})=0$,
i.e. $F_{i}$ vanishes along $\mathcal{L}_{i}$. 
\end{cor}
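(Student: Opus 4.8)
The plan is to read off $F_i$ as a residue of the rational function appearing in the identity \eqref{eq:R on A def}. First I would multiply both sides of \eqref{eq:R on A def} by $(z-e_i)$. Since the $e_k$ are distinct, every summand on the left except the $i$-th one vanishes in the limit $z\to e_i$, while the right-hand side stays finite, so evaluating the limit gives the pointwise identity
\[
F_i=\frac{2He_i^2-\eta_1 e_i+\eta_2}{\prod_{k\neq i}(e_i-e_k)},
\]
where $H$ is the geodesic Hamiltonian (equal to the Casimir value $h$ after reduction). One could equally well obtain this by directly substituting the definitions \eqref{eq:ulenbech integral def} and \eqref{eq:Separation constants ellipsoidal} and clearing denominators, but the residue argument is shorter and makes the link with the bifurcation lines transparent.

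Second, I would observe that the denominator $\prod_{k\neq i}(e_i-e_k)$ is a fixed nonzero constant, so $F_i=0$ at a point of the reduced phase space if and only if the numerator $2He_i^2-\eta_1 e_i+\eta_2$ vanishes there. Using the normalisation $2h=1$ adopted in Proposition~\ref{The-bifurcation-diagram Lemma}, the numerator becomes $e_i^2-\eta_1 e_i+\eta_2$, and its vanishing is precisely the equation $\eta_2-e_i(\eta_1-e_i)=0$ defining the line $\mathcal{L}_i$. This is the same computation that appears in the proof of Proposition~\ref{The-bifurcation-diagram Lemma}: $\mathcal{L}_i$ is exactly the locus where the numerator of $\psi(\lambda)$ vanishes at $\lambda=e_i$, so $F_i\equiv 0$ on the fibre over any point of $\mathcal{L}_i$, and conversely.

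There is essentially no obstacle here — the result is a one-line consequence of \eqref{eq:R on A def}. The only point requiring a little care is bookkeeping on the reduced manifold: $F_i$, being built from the $\ell_{ij}$ alone, descends to a well-defined function on $S^2\times S^2$ (the leaf $\mathcal{C}_1=2h$, $\mathcal{C}_2=0$), and the statement ``$F_i=0$ if and only if $\eta_2-e_i(\eta_1-e_i)=0$'' is to be read as an identity of functions on that leaf, i.e.\ the zero set of the reduced $F_i$ coincides with the preimage under $(\eta_1,\eta_2)$ of the line $\mathcal{L}_i$.
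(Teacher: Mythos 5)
Your proof is correct and follows essentially the same route as the paper: both take the residue of \eqref{eq:R on A def} at $z=e_i$ to get $F_i$ as the numerator $2He_i^2-\eta_1 e_i+\eta_2$ divided by the nonzero constant $\prod_{k\neq i}(e_i-e_k)$, and conclude that $F_i=0$ exactly on $\mathcal{L}_i$. (Your sign is in fact the consistent one; the paper's displayed formula carries a spurious minus sign, which is immaterial to the vanishing statement.)
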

\begin{proof}
Taking the residue at $e_{i}$ of both sides of (\ref{eq:R on A def})
gives 
\[
F_{i}=-\frac{e_{i}^{2}-e_{i}\eta_{1}+\eta_{2}}{(e_{i}-e_{k})(e_{i}-e_{l})(e_{i}-e_{m})}
\]
 where indicies $i,k,l,m$ are all distinct. Since we have assumed
all semi major axes are distinct, $F_{i}=0$ if and only if $(\eta_{1},\eta_{2})$
lie on $\mathcal{L}_{i}$.
\end{proof}

\begin{figure}
\begin{centering}
\includegraphics[width=14cm]{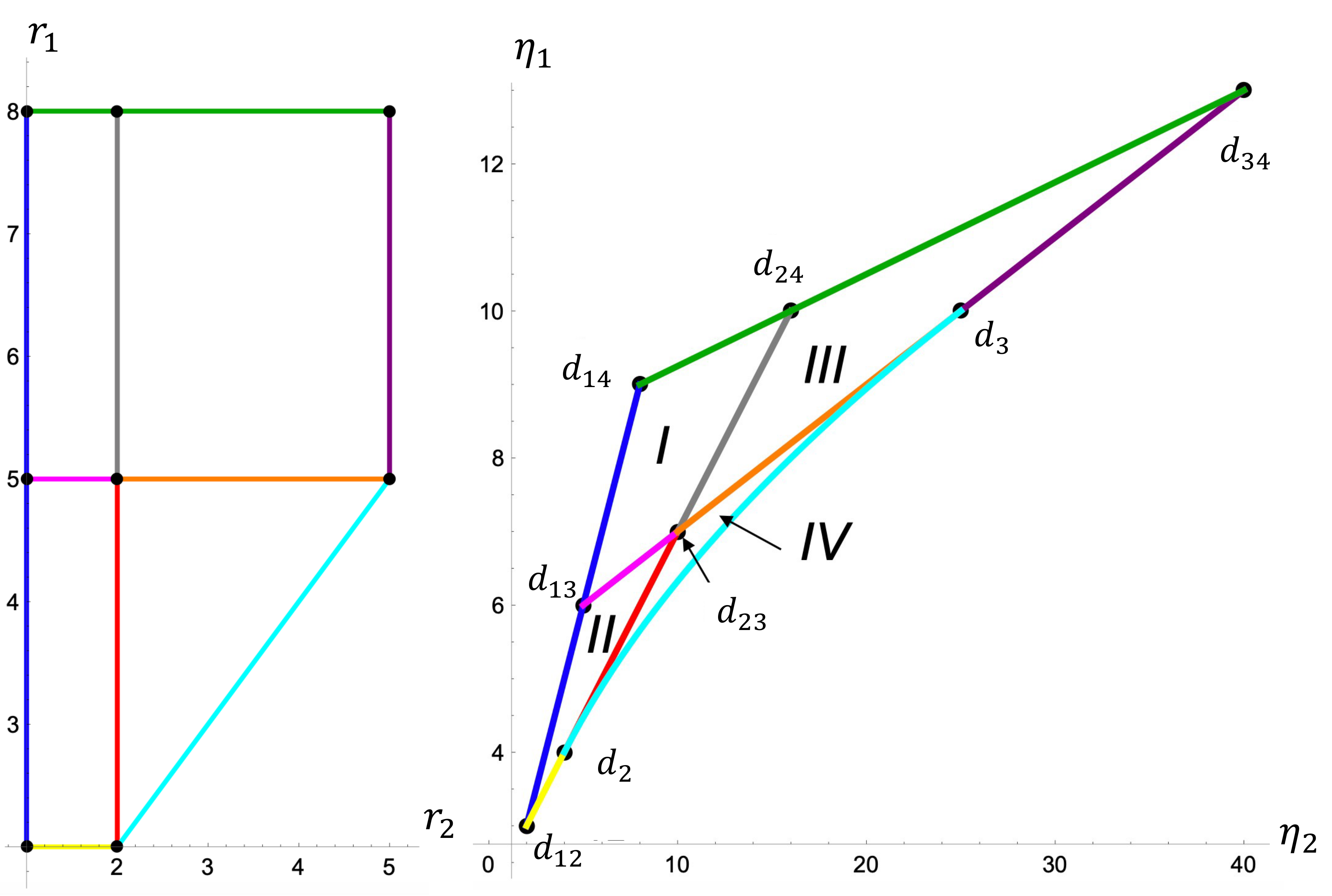}
\par\end{centering}
\caption{a) Root diagram of the reduced ellipsoidal system with $(e_{1},e_{2},e_{3},e_{4})=(1,2,5,8)$.
b) Corresponding bifurcation diagram. The four chambers of the image of the
momentum map are labelled $I-IV$. The colours of the various lines will be kept in further figures. \label{fig:root diagram and mm ellipsoidal}}
\end{figure}
To classify the nature of the critical points, we compute the eigenvalues
of the linearisation $\nabla[B(\lambda\nabla\eta_{1}+\nabla\eta_{2})]$.
The intersections $d_{12},d_{14}$ and $d_{34}$ are all elliptic-elliptic
critical values, $d_{13}$ and $d_{24}$ are of elliptic-hyperbolic
type and $d_{23}$ is hyperbolic-hyperbolic. The tangential intersections
$d_{2}$ and $d_{3}$ are degenerate. The lines $\mathcal{L}_{1},\mathcal{L}_{4}$,
the curve $\mathcal{C}$, as well as the yellow and purple parts of
$\mathcal{L}_{2}$ and $\mathcal{L}_{3}$ respectively have one pair
of imaginary eigenvalues and so are codimension one elliptic. The
magenta, orange and grey, red segments of $\mathcal{L}_{3}$ and $\mathcal{L}_{2}$
give one pair of real eigenvalues and so are codimension one hyperbolic.

\subsection{Critical Fibres}

Unlike the critical points, the parametrisation of the critical fibre
cannot be computed algebraically. We will instead provide informal description
and topological classification of the fibres on $S^2\times S^2$ instead.

Firstly, by Louvile-Arnold theorem, the preimage of regular values
in regions $I,II,III,IV$ are $\mathbb{T}^{2}$ in $S^2\times S^2$.

Since $d_{12},d_{14},d_{34}$ are elliptic-elliptic, their preimage
on $S^2\times S^2$ are 2 points each, the critical points found earlier. 

Next, consider the lines immediately connected to $d_{12},d_{14},d_{34}$
These are the lines $\mathcal{L}_{1},\mathcal{L}_{4}$, as well as
the yellow and purple parts of $\mathcal{L}_{2}$ and $\mathcal{L}_{3}$
respectively. They are codimension one elliptic and so their fibres
are circles $S^1$ and only contain critical points.
The multiplicity of these circles is two as a result of extending the
multiplicities of $d_{12},d_{14}$ and $d_{34}$.

Similarly, $d_{13}$ and $d_{24}$ are elliptic-hyperbolic critical
values. There are 2 intersecting circles of critical points in their fibres. To obtain
the full fibre, we observe that as we move along $\mathcal{L}_{1}$
(resp. $\mathcal{L}_{4})$ and pass by $d_{13}$ (resp. $d_{24})$,
two $S^1$ bifurcate into two $S^1$. Such a bifurcation is represented
by the Fomenko atom $C_{2}$.

The magenta and grey segments of $\mathcal{L}_{3}$ and $\mathcal{L}_{2}$
are extensions of $d_{13}$ and $d_{24}$ respectively, the fibres
of these segments are $S^1\times C_{2}$.

Since the grey and magneta lines both have a $C_{2}$ type singularity,
we know that the fibre of $d_{23}$ is of type $(C_{2},C_{2})$ $l$-type
of complexity $2$ with loop molecule number 17 in \cite{book}. The critical fibre is simpler, there are only 4 types after symmetry reduction \cite{DullinVuNgoc07}.

Since the $17$ saddle saddle singularity contains $4$ $B$ atoms, it follows that the fibres of the red
and orange lines are $2B\times S^1$. 

The cyan curve is codimension one elliptic and so its fibre is a circle
$S^{1}$. To find the multiplicity of these $S^1$, we note that the
fibre of $d_{23}$ contains $4S^1$. Extending $d_{23}$ into chamber
$IV$, we multiply by $S^{1}$. Hence, the fibre of a regular value
in chamber $IV$ is $4T^{2}$. Continuing onto $\mathcal{C}$, we
see that the fibre along the curve must be $4S^1$.

The fibre type does not change at the degenerate points $d_{2}$ and $d_{3}$, hence are simply $2S^1$. Approaching $d_{2}$
and $d_{3}$ along the yellow and purple lines respectively, we see
$2S^1$ bifurcate into $4S^1$ (along the curve $\mathcal{C}$)
as well as a hyperbolic fibre ($2B\times S^1$). This means $d_{2}$
and $d_{3}$ are pitchfork bifurcations as described in \cite{book}.

Extending the codimension $1$ lines $\mathcal{L}_{1}$ and $\mathcal{L}_{4}$
into chambers $I-III$ gives the following corollary. 
\begin{cor}
The fibre of a regular point on the momentum map is a torus $T^{2}$.
The multiplicity of the tori in chambers $I-III$ is $2$, while tori
in chamber $IV$ have multiplicity $4$. 
\end{cor}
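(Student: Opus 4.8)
The plan is to combine the Liouville--Arnold theorem with the identification of the critical fibres carried out above. First I would note that $S^2\times S^2$ is compact, so every fibre of the momentum map $(\eta_1,\eta_2)$ is compact; over a regular value the fibre is a smooth closed surface carrying the commuting complete Hamiltonian flows of $\eta_1$ and $\eta_2$, hence by Liouville--Arnold each of its connected components is a $2$-torus $T^2$. The only thing left to establish is the number of components (the \emph{multiplicity}) over each of the four chambers $I$--$IV$ of Figure~\ref{fig:root diagram and mm ellipsoidal}~b).

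The mechanism I would invoke is the behaviour of the Liouville foliation along a codimension-one \emph{elliptic} critical line. Near such a line the foliation is, up to a finite covering, the product of a circle with the standard elliptic singular fibration of a disc, in which a single $T^2$ degenerates to an $S^1$ as one moves onto the line. Therefore each $S^1$ in the critical fibre over an elliptic codimension-one segment is the limit of exactly one regular $T^2$ on the adjacent side, so the multiplicity of the regular tori in a chamber equals the number of circles in the critical fibre over any elliptic codimension-one segment on its boundary. As already observed above, the lines $\mathcal{L}_1$ and $\mathcal{L}_4$ (together with the yellow and purple parts of $\mathcal{L}_2$ and $\mathcal{L}_3$) can be continued into each of the chambers $I$, $II$, $III$; their critical fibres were shown above to be $2S^1$, the multiplicity two being inherited from the elliptic-elliptic values $d_{12},d_{14},d_{34}$. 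Hence the multiplicity in $I$, $II$, $III$ is $2$.

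For chamber $IV$ I would instead use the part of $\mathcal{C}$ on its boundary: above we showed that the saddle--saddle value $d_{23}$ is of $(C_2,C_2)$-type, whose fibre contains $4S^1$, that extending $d_{23}$ into chamber $IV$ multiplies by $S^1$ to give $4T^2$, and that $\mathcal{C}$ carries the elliptic codimension-one fibre $4S^1$; by the same principle the multiplicity over chamber $IV$ is $4$. As a consistency check, chamber $III$ (multiplicity $2$) and chamber $IV$ (multiplicity $4$) are separated by the red and orange hyperbolic segments of $\mathcal{L}_2$ and $\mathcal{L}_3$, whose fibres $2B\times S^1$ were computed above; since a single $B$-atom glues one torus on one side to two tori on the other, the atom $2B$ joins $2T^2$ to $4T^2$, in agreement with the counts.

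The step I expect to be the main obstacle is making the ``one circle corresponds to one torus'' principle fully rigorous — that is, establishing the local product/covering structure of the Liouville foliation along the elliptic codimension-one strata (equivalently, excluding any monodromy or extra identification of components as one crosses such a line) — together with the bookkeeping of reading off from Figure~\ref{fig:root diagram and mm ellipsoidal}~b) that every chamber among $I$--$III$ is indeed bounded by a segment of $\mathcal{L}_1$ or $\mathcal{L}_4$, so that no chamber is reached only across a stratum whose multiplicity has not already been determined.
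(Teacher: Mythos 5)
Your argument is correct and follows essentially the same route as the paper: Liouville--Arnold gives the torus components, the multiplicity $2$ in chambers $I$--$III$ is read off from the $2S^1$ fibres of the elliptic codimension-one segments of $\mathcal{L}_1$, $\mathcal{L}_4$ (inherited from the elliptic-elliptic points $d_{12},d_{14},d_{34}$), and the multiplicity $4$ in chamber $IV$ comes from the $(C_2,C_2)$ fibre of $d_{23}$ and the $4S^1$ fibre over $\mathcal{C}$. Your added consistency check via the $2B\times S^1$ fibres on the hyperbolic segments is a nice confirmation but does not change the substance of the argument.
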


\subsection{The Action Map}\label{ell-act}

Recall that the Liouville tori of the ellipsoidal integrable system
are certain coverings of the real parts of the Jacobi variety of the genus
3 hyperelliptic curve defined by $w^{2}(z)=-R(z)A(z)$. The actions of
this system are the periods of the Abelian integral
\[
I_{j}=\frac{1}{2\pi}\oint_{\gamma_{j}}p_{j}(s)ds=\frac{1}{2\pi}\oint_{\gamma_{j}}\sqrt{\frac{-R(s)}{4A(s)}}ds=\frac{1}{2\pi}\oint_{\gamma_{j}}\frac{-R(s)}{2w(s)}ds.
\]

The actions $I_{j}$ are discontinuous on phase space
as we cross boundaries of chambers of the bifurcation diagram. To construct
a set of continuous actions, we perform discrete symmetry reduction by
the $2^{4}$ discrete symmetries generated by the reflections 
\[
\sigma_{i}:(x_{i},p_{i})\to(-x_{i},-p_{i}).
\]

Following \cite{GURNeumanQuantum}, we can construct symmetry reduced actions
that are continuous accross all chambers of the momentum map. A detailed
explanation can be found in \cite{DullinNeuman}.
\begin{lem}
\label{def:action}The continuous actions of the ellipsoidal integrable
system $(J_{1},J_{2},J_{3})$ are 
\begin{equation}
\begin{aligned}J_{1}=\frac{2}{\pi}\int_{e_{1}}^{\min(r_{1},e_{2})}p(s)ds, &  & J_{2}=\frac{2}{\pi}\int_{\max(r_{1},e_{2})}^{\min(r_{2},e_{3})}p(s)ds, &  & J_{3}=\frac{2}{\pi}\int_{\max(r_{2},e_{3})}^{e_{4}}p(s)ds\end{aligned}
.\label{eq:ACtions ell dfef}
\end{equation}
This discrete symmetry reduction reduces the multiplicities of $T^2$ in all chambers to $1$ and this is the reason why the discrete symmetry reduced system has a globally continuous action map. The actions $J_{i}$ are independent on $T^*S^3$ but since $H$ is a superintegrable Hamiltonian, they are related on an energy surface, and hence become dependent for the reduced system on $S^2\times S^2$. 
\end{lem}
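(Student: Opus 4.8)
The plan is to verify the formula \eqref{eq:ACtions ell dfef} in two stages: first establish that $(J_1,J_2,J_3)$ as written are honest, smooth, globally defined functions on (an open dense set of) the reduced $S^2\times S^2$, and second check the dependence relation that must hold because $H$ was reduced away. Start from the Abelian integrals $I_j = \tfrac1{2\pi}\oint_{\gamma_j} p(s)\,ds$ over the three ovals of the curve $w^2 = -R(z)A(z)$; these are the actions on $T^*S^3$ before any symmetry reduction. The three ovals live respectively over the intervals $[e_1,e_2]$, $[e_2,e_3]$, $[e_3,e_4]$ of the $s$-axis (this is exactly the interval structure used in the proof of Proposition~\ref{The-bifurcation-diagram Lemma}), with $p(s)^2 = -R(s)/(4A(s))$ nonnegative only on the subintervals cut out by the roots $r_1\le r_2$ of $R$. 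I would make precise the claim from \cite{GURNeumanQuantum,DullinNeuman} that quotienting by the group generated by the reflections $\sigma_i:(x_i,p_i)\mapsto(-x_i,-p_i)$ replaces each full period $\oint_{\gamma_j}$ by the ``half-period'' $2\int$ over the relevant segment with endpoints determined by $\max/\min$ of $\{r_k, e_\ell\}$ — precisely the integrals appearing in \eqref{eq:ACtions ell dfef}. The $\min$/$\max$ clipping is exactly what makes the integrand's support vary continuously as $(\eta_1,\eta_2)=(r_1+r_2,\,r_1r_2)$ crosses the chamber walls $\mathcal L_i$ and the curve $\mathcal C$: when $r_1$ moves from below $e_2$ to above it, the upper limit of $J_1$ freezes at $e_2$ while the lower limit of $J_2$ unfreezes from $e_2$, so no jump occurs. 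I would state this continuity as the content of the first sentence of the lemma and note that the multiplicity-$2$ (chambers $I$--$III$) and multiplicity-$4$ (chamber $IV$) tori found in the previous corollary collapse to a single torus under the $2^4$ reflections, which is why the symmetry-reduced action map is single-valued and globally continuous.

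For the independence claim on $T^*S^3$: the three actions $J_1,J_2,J_3$ together with $H$ are the action variables of a genuine $3$-degree-of-freedom integrable system (the St\"ackel/ellipsoidal system), so $dJ_1\wedge dJ_2\wedge dJ_3 \neq 0$ generically follows from the nondegeneracy of the action–angle construction — equivalently, the period matrix of the hyperelliptic curve is nonsingular for generic $(h,\eta_1,\eta_2)$, which is standard for genus-$3$ curves with distinct branch points. For the dependence on $S^2\times S^2$: since $H$ is superintegrable, restricting to the energy level $2h=1$ and then reducing by the $S^1$-flow of $H$ removes one degree of freedom, so on the reduced space only two of the three quantities $(\eta_1,\eta_2)$ are functionally independent and hence only two independent actions survive; concretely, one exhibits an explicit functional relation $\Phi(J_1,J_2,J_3;h)=0$. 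The cleanest way to produce it is to note that $\sum_j I_j$ (or an appropriate integer combination) equals the period of $p(s)$ around a cycle encircling \emph{all four} branch points $e_1,\dots,e_4$, which by a residue computation at $s=\infty$ — using $R(z)=2hz^2-\eta_1 z+\eta_2$ and $A(z)=\prod(z-e_k)$ — evaluates to something depending only on $h$ (indeed on $\sqrt{2h}$), giving the linear relation among the $J_i$ modulo a constant. I would carry this out by expanding $\sqrt{-R(s)/(4A(s))}$ in $1/s$ and reading off the residue.

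I expect the main obstacle to be the careful bookkeeping in the symmetry-reduction step: identifying precisely which quotient of the $2^4$ reflection group acts on each chamber's torus, and checking that the resulting half-period integrals \eqref{eq:ACtions ell dfef} are not merely continuous but match the \emph{correct} action normalisation (factor $2/\pi$ rather than $1/(2\pi)$) and glue smoothly across the cyan segment where $r_1=r_2$ and the local model degenerates. This is essentially the content of \cite{GURNeumanQuantum} and \cite{DullinNeuman} for the Neumann system, and I would lean on those references for the combinatorial structure, adapting their argument to the present curve $w^2=-R(z)A(z)$; the only genuinely new input is that here the ambient system is superintegrable, so after reduction the three actions are constrained — and that constraint is exactly the residue-at-infinity identity described above.
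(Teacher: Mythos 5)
Your proposal is correct and follows essentially the same route as the paper: the paper states this lemma without a detailed proof, deferring the half-period/symmetry-reduction construction to \cite{GURNeumanQuantum,DullinNeuman} exactly as you do, and establishing the dependence relation $J_1+J_2+J_3=\sqrt{2h}$ in the immediately following lemma by the same deformation-to-infinity and residue computation you describe. Your observation that the $\min/\max$ clipping makes the limits of integration freeze/unfreeze at $e_2$, $e_3$ so that no jump occurs across the walls $\mathcal{L}_i$ and $\mathcal{C}$ is precisely the mechanism behind the claimed global continuity.
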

\begin{lem}
\label{Action lemma} The continuous actions satisfy the relation
\begin{equation}
J_{1}+J_{2}+J_{3}=\sqrt{2h}.\label{eq:action cond el}
\end{equation}
\end{lem}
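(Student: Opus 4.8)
The plan is to compute the sum $J_1+J_2+J_3$ directly from the integral representation \eqref{eq:ACtions ell dfef} and recognise it as a single period integral to which the residue calculus applies. First I would observe that the three intervals of integration in \eqref{eq:ACtions ell dfef}, namely $[e_1,\min(r_1,e_2)]$, $[\max(r_1,e_2),\min(r_2,e_3)]$ and $[\max(r_2,e_3),e_4]$, are precisely the three connected components into which $[e_1,e_4]$ is cut by the two points $r_1\le r_2$ together with the intermediate branch points $e_2,e_3$ — and exactly over these components one has $p(s)^2 = \psi(s)/4 = -R(s)/(4A(s)) \ge 0$ by the root-diagram analysis in Proposition~\ref{The-bifurcation-diagram Lemma}. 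Hence $J_1+J_2+J_3$ equals $\tfrac{2}{\pi}$ times the integral of $p(s)=\sqrt{-R(s)/(4A(s))}$ over the full real locus where the integrand is real, i.e. over a single homology cycle $\gamma$ on the hyperelliptic curve $w^2=-R(z)A(z)$ that encircles all the relevant branch cuts in $[e_1,e_4]$.

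Next I would deform $\gamma$ to a large circle at infinity and evaluate by residues. Writing $p(s) = \tfrac12\sqrt{-R(s)/A(s)}$ with $R(z) = 2hz^2 - \eta_1 z + \eta_2$ of degree $2$ and $A(z)=\prod_{k=1}^4(z-e_k)$ of degree $4$, the ratio $-R/A$ behaves like $-2h/z^2$ as $z\to\infty$, so $\sqrt{-R(z)/A(z)} \sim \sqrt{-2h}\,/z$ (a suitable branch), and the contour integral around infinity picks up the residue at $z=\infty$, giving a contribution proportional to $\sqrt{2h}$. Concretely, $\oint_\gamma p(s)\,ds = -\oint_{|s|=\rho}p(s)\,ds = -2\pi i\,\mathrm{Res}_{s=\infty}\,p(s)$, and expanding $p(s) = \tfrac12 z^{-1}\sqrt{-2h}\,(1+O(1/z))$ yields $\mathrm{Res}_{s=\infty}p = -\tfrac12\sqrt{-2h}$ up to sign/branch bookkeeping; combining with the $\tfrac{2}{\pi}$ prefactor and the factor $\tfrac12$ from $\tfrac{1}{2\pi}\to\tfrac{1}{\pi}$ normalisation gives exactly $\sqrt{2h}$. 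One must be careful that the orientation of $\gamma$ and the branch of the square root are chosen consistently with the sign conventions that make each $J_i\ge 0$, which fixes the sign of the answer to $+\sqrt{2h}$.

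An alternative, perhaps cleaner, route is to avoid branch-of-square-root subtleties entirely: since $H$ is superintegrable and on the energy surface $2h=1$ the Hamiltonian itself generates the reducing $S^1$-action, the total action associated with the reduced circle must equal the action of that $S^1$, which is $\sqrt{2h}$ by the remark in Lemma~\ref{def:action} that the $J_i$ become dependent precisely because of this superintegrable structure; one then only needs to check that $J_1+J_2+J_3$ is the action conjugate to the angle along the $H$-orbit, e.g. by noting that the period of the geodesic flow on $S^3$ at energy $h$ with $\lvert \bm x\rvert=1$ is $2\pi/\sqrt{2h}$, so the corresponding action is $\sqrt{2h}$. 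I expect the main obstacle to be the careful bookkeeping of orientations, branch cuts and the factor-of-two normalisations in passing from the three real integrals $J_i$ to the single residue at infinity; the residue computation itself is elementary once the contour is correctly assembled.
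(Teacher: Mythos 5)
Your main argument—summing the three integrals into a single cycle on the hyperelliptic curve, deforming it to a contour around infinity, and evaluating via $\mathrm{Res}(p,\infty)\sim\sqrt{-2h}$—is exactly the proof given in the paper, down to the same residue computation. The proposal is correct, modulo the sign/branch bookkeeping you already flag, so no further comment is needed.
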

\begin{proof}
Let $\beta_{i}$ be cycles that enclose the bounds of $J_{i}$ respectively.
I.e. $\beta_{1}$ encloses the interval $[e_{1},\min(r_{1},e_{2})]$ and similarly
for $\beta_{2},\beta_{3}$. By deforming the cycles on the hyperelliptic
curve $w^{2}=-R(z)A(z)$ we have 
\begin{equation}
J_{1}+J_{2}+J_{3}=-\frac{1}{2\pi}\oint_{\gamma}pdz\label{eq:sum of actions}
\end{equation}
 where $\gamma$ is a cycle that encircles the point at infinity.
It is easily shown that 
\begin{equation}
\text{Res}(p,\infty)=-\sqrt{-2h}.\label{eq:Residue at infinity}
\end{equation}
Combining (\ref{eq:Residue at infinity}) with (\ref{eq:sum of actions})
gives the desired result.
\end{proof}
\begin{thm}
The image of the action map \eqref{eq:ACtions ell dfef} is an equilateral triangle $\mathcal{T}$
(see Figure \ref{fig:Action map}).\label{action thm}
\end{thm}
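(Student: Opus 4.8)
The plan is to show two things: that the image is contained in the equilateral triangle $\mathcal{T}:=\{(J_1,J_2,J_3)\in\mathbb{R}^3_{\ge 0}:J_1+J_2+J_3=\sqrt{2h}\}$, and that it exhausts it. The first inclusion is almost immediate. In \eqref{eq:ACtions ell dfef} the integrand $p(s)=\sqrt{-R(s)/(4A(s))}$ is, by construction, the nonnegative square root and is real precisely on the intervals of motion, which are exactly the intervals of integration; moreover one checks that the limits are always correctly ordered, $e_1\le\min(r_1,e_2)$, $\max(r_1,e_2)\le\min(r_2,e_3)$, $\max(r_2,e_3)\le e_4$. Hence $J_1,J_2,J_3\ge 0$, and together with Lemma~\ref{Action lemma} this gives $\mathrm{image}\subseteq\mathcal{T}$. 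That $\mathcal{T}$ is equilateral is elementary: it is the $2$-simplex with vertices $\sqrt{2h}\,\mathbf{e}_i$ inside the affine plane $J_1+J_2+J_3=\sqrt{2h}$, and $|\sqrt{2h}\,\mathbf{e}_i-\sqrt{2h}\,\mathbf{e}_j|=2\sqrt{h}$ for all $i\neq j$.

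For surjectivity I would proceed as follows. The actions depend only on $(r_1,r_2)$ (equivalently on $(\eta_1,\eta_2)$), so the action map factors through the root diagram $\mathcal{R}\subset\{e_1\le r_1\le r_2\le e_4\}$ of Proposition~\ref{The-bifurcation-diagram Lemma}, a compact connected region; its image is therefore a compact connected subset of $\mathcal{T}$. On the interior: differentiating \eqref{eq:ACtions ell dfef} under the integral sign (the Leibniz boundary terms vanish, the moving limits being turning points where $p=0$), $\partial J_i/\partial r_k$ is a convergent integral of $(s-r_{k'})\,ds/(A(s)p(s))\propto (s-r_{k'})\,ds/w(s)$, so the Jacobian $\det\partial(J_1,J_2)/\partial(r_1,r_2)$ is, up to a nonzero factor, a $2\times 2$ period determinant of the holomorphic differentials $ds/w$ and $s\,ds/w$ on the curve $w^2=-R(s)A(s)$ over the cycles $\beta_i$; this is nonzero (a classical property of the period matrix for separable systems, from nondegeneracy of the Riemann period matrix), so the action map is open from $\operatorname{int}\mathcal{R}$ onto an open subset of $\operatorname{int}\mathcal{T}$. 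On the boundary: the three edges $\{r_1=e_1\}$, $\{r_2=e_4\}$, $\{r_1=r_2\in[e_2,e_3]\}$ of $\mathcal{R}$ each collapse one integration interval in \eqref{eq:ACtions ell dfef}, forcing $J_1=0$, $J_3=0$, $J_2=0$ respectively, hence map into the three sides of $\mathcal{T}$; the three corners where two such conditions meet (e.g.\ $r_1=e_3$, $r_2=e_4$) map to the three vertices, as the elementary evaluation $\tfrac{2}{\pi}\int_{e_1}^{e_2}\tfrac12\sqrt{\tfrac{2h}{(s-e_1)(e_2-s)}}\,ds=\sqrt{2h}$ (and its symmetric counterparts) shows; and monotonicity of the surviving action along each edge, again by differentiation under the integral, shows each edge of $\mathcal{R}$ is mapped onto a full side of $\mathcal{T}$. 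Thus $\partial\mathcal{T}\subseteq\mathrm{image}$.

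To finish, combine the two observations. The image is closed (continuous image of the compact $S^2\times S^2$), contains all of $\partial\mathcal{T}$, and contains a nonempty subset open in $\mathcal{T}$; since the preimages not over $\operatorname{int}\mathcal{R}$ that could land in $\operatorname{int}\mathcal{T}$ only sit over the three collapsing edges and those already map into $\partial\mathcal{T}$, the image meets the connected set $\operatorname{int}\mathcal{T}$ in a nonempty relatively open and relatively closed set, so it contains $\operatorname{int}\mathcal{T}$ and hence equals $\mathcal{T}$. Equivalently, one may run a degree argument: $\mathcal{R}$ is a topological disk whose boundary is swept, via the three collapsing edges, once around $\partial\mathcal{T}$, so no interior point of $\mathcal{T}$ can be omitted. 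The main obstacle is exactly this surjectivity step, and within it the delicate point is the behaviour of the action map on $\partial\mathcal{R}$: the root diagram also has boundary arcs such as $\{r_1=e_2\}$ that map to \emph{interior} critical-value curves of $\mathcal{T}$, across which the action map is only continuous, not smooth, so some care is needed to see that the three collapsing edges alone control the boundary of the image; the cleanest rigorous route is the winding-number argument, whose only nontrivial input is the monotonicity of the surviving action along each collapsing edge.
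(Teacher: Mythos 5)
Your proposal is correct and follows the same skeleton as the paper's proof: the inclusion $\mathrm{image}\subseteq\mathcal{T}$ via Lemma~\ref{Action lemma} and $J_i\ge 0$, the identification of the three sides of $\mathcal{T}$ with the critical-value curves ($J_1=0$ with $\mathcal{L}_1$, $J_3=0$ with $\mathcal{L}_4$, $J_2=0$ with $\mathcal{C}$ and the adjacent segments of $\mathcal{L}_2,\mathcal{L}_3$), and then filling the interior. Where you genuinely add value is the surjectivity step: the paper disposes of it with the single sentence that the $J_i$ are continuous, which by itself does not force the interior of $\mathcal{T}$ to be covered (a connected compact image containing $\partial\mathcal{T}$ need not contain $\operatorname{int}\mathcal{T}$). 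Your two supplementary ingredients --- openness of the action map on the interior of the root diagram via the nonvanishing of the $2\times 2$ period determinant of the holomorphic differentials $ds/w$, $s\,ds/w$ over the cycles $\beta_i$, and the winding/degree argument using that $\partial\mathcal{R}$ sweeps $\partial\mathcal{T}$ once --- are exactly what is needed to make the paper's claim rigorous, and both are standard for separable systems of this type. One bookkeeping caveat: your list of ``collapsing edges'' of the root diagram assigns $J_2=0$ only to the diagonal $r_1=r_2\in[e_2,e_3]$, whereas (as your own vertex example $(r_1,r_2)=(e_3,e_4)$ shows, and as the paper states) the side $J_2=0$ is also reached along the segments $r_2=e_2$ and $r_1=e_3$ (the yellow and purple parts of $\mathcal{L}_2$ and $\mathcal{L}_3$); you flag this at the end, but the winding-number argument should be run with the full boundary decomposition of $\mathcal{R}$, distinguishing the outer arcs that land on $\partial\mathcal{T}$ from the interior chamber walls such as $r_1=e_2$ that map to the curves $\gamma_j$ inside $\mathcal{T}$.
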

\begin{proof}
From Lemma 5, we know that the image of the action map is constrained
to the plane $J_{1}+J_{2}+J_{3}=\sqrt{2h}$. This is bounded by $J_{i}\geq0$
and hence the image is contained in the intersection of the plane
$J_{1}+J_{2}+J_{3}=\sqrt{2h}$ with the positive quadrant. Since the
maps $J_{i}$ are continuous on the reduced phase space, every point
in the interior of $\mathcal{T}$ must be a point in the image of
the action map. On the boundary of $\mathcal{T},$ the lines $J_{1}=0$ and $J_3=0$ are the image of the lines $\mathcal{L}_1$ and $\mathcal{L}_4$ respectively while $J_2=0$ is the image of the cyan curve $\mathcal{C}$ together with the yellow and purple segments of $\mathcal{L}_2$ and $\mathcal{L}_3$. Thus, the triangle $\mathcal{T}$ formed
by intersecting the plane $J_{1}+J_{2}+J_{3}=\sqrt{2h}$ with the
positive quadrant is the image of the symmetry reduced phase space
under the action map.
\end{proof}

The action map calculated using (\ref{eq:ACtions ell dfef}) is shown
in Figure \ref{fig:Action map} for $h=\frac{1}{2}$. Let lines
$J_{i}=0$ be $\mathfrak{J}_{i}$ and call the interior
lines $\gamma_{1}$ (red and grey) and $\gamma_{2}$ (magenta and
orange). We denote by $A_{ij}$ the intersection of $\mathfrak{J}_{i}$
and $\gamma_{j}$. Similar polytopes are known to classify toric systems
(two dimensional integrable systems where both integrals are global
$S^{1}$ actions). For more details on this, see \cite{BSMF_1988__116_3_315_0}.
In our case, the $\gamma_{i}$ in the interior of the action map reflect
the non-toric nature of the ellipsoidal integrable system. Along these lines the hyperelliptic action integrals \eqref{def:action} become elliptic. Let 
\[
\mathcal{T}(u,v,\alpha)=\frac{(u-v)K(k)+(e_{4}-e_{1})\Pi(\alpha,k)}{\pi\sqrt{(e_{1}-e_{3})(e_{2}-e_{4})}}
\]
where $k^{2}=\frac{(e_{4}-e_{3})(e_{2}-e_{1})}{(e_{4}-e_{2})(e_{3}-e_{1})}$
and $K(k),\ \Pi(\alpha,k)$ are the complete elliptic integrals of the
first and third kind respectively. The tangential intersections of
$\mathfrak{J}_{2}$ with $\gamma_{1},\gamma_{2}$ occur at $A_{21}$
and $A_{22}$. These are given by 
\[
\begin{aligned}A_{21}=(\mathcal{T}(e_{2},e_{4},\alpha_{1}),0,\mathcal{T}(e_{1},e_{2},\alpha_{2})), &  & A_{22}=(\mathcal{T}( & e_{3},e_{4},\alpha_{1}),0,\mathcal{T}(e_{1},e_{3},\alpha_{2}))\end{aligned}
\]
 where $\alpha_{1}=\frac{e_{2}-e_{1}}{e_{2}-e_{4}}$ and $\alpha_{3}=\frac{e_{4}-e_{3}}{e_{1}-e_{3}}$.
Transverse intersections of $\mathfrak{J}_{i}$ with $\gamma_{j}$
occur at $A_{31}$ and $A_{12}$ given by
\[
\begin{aligned}A_{31}=\frac{2}{\pi}(\sin^{-1}(u_{1}),\cos^{-1}(u_{1}),0) &  & A_{12} & =\frac{2}{\pi}(0,\sin^{-1}(u_{2}),\cos^{-1}(u_{2}))\end{aligned}
\]
where $u_{1}=\sqrt{\frac{e_{1}-e_{2}}{e_{1}-e_{3}}}$ and $u_{2}=\sqrt{\frac{e_{2}-e_{3}}{e_{2}-e_{4}}}$.
The intersection of $\gamma_{1}$ and $\gamma_{2}$ is located at
\begin{equation} \label{eq:HHptaction}
(J_{1},J_{2},J_{3})=\frac{2}{\pi}(\sin^{-1}\left(v_{1}\right),\sin^{-1}\left(v_{2}\right)-\sin^{-1}\left(v_{1}\right),\cos^{-1}\left(v_{2}\right))
\end{equation}
 where $v_{1}=\sqrt{\frac{e_{1}-e_{2}}{e_{1}-e_{4}}}$ and $v_{2}=\sqrt{\frac{e_{1}-e_{3}}{e_{1}-e_{4}}}$. Notice that these intersection points are invariant under affine transformations of $(e_1,e_2,e_3,e_4)$ as expected.

\begin{figure}
\begin{centering}
\includegraphics[width=8cm]{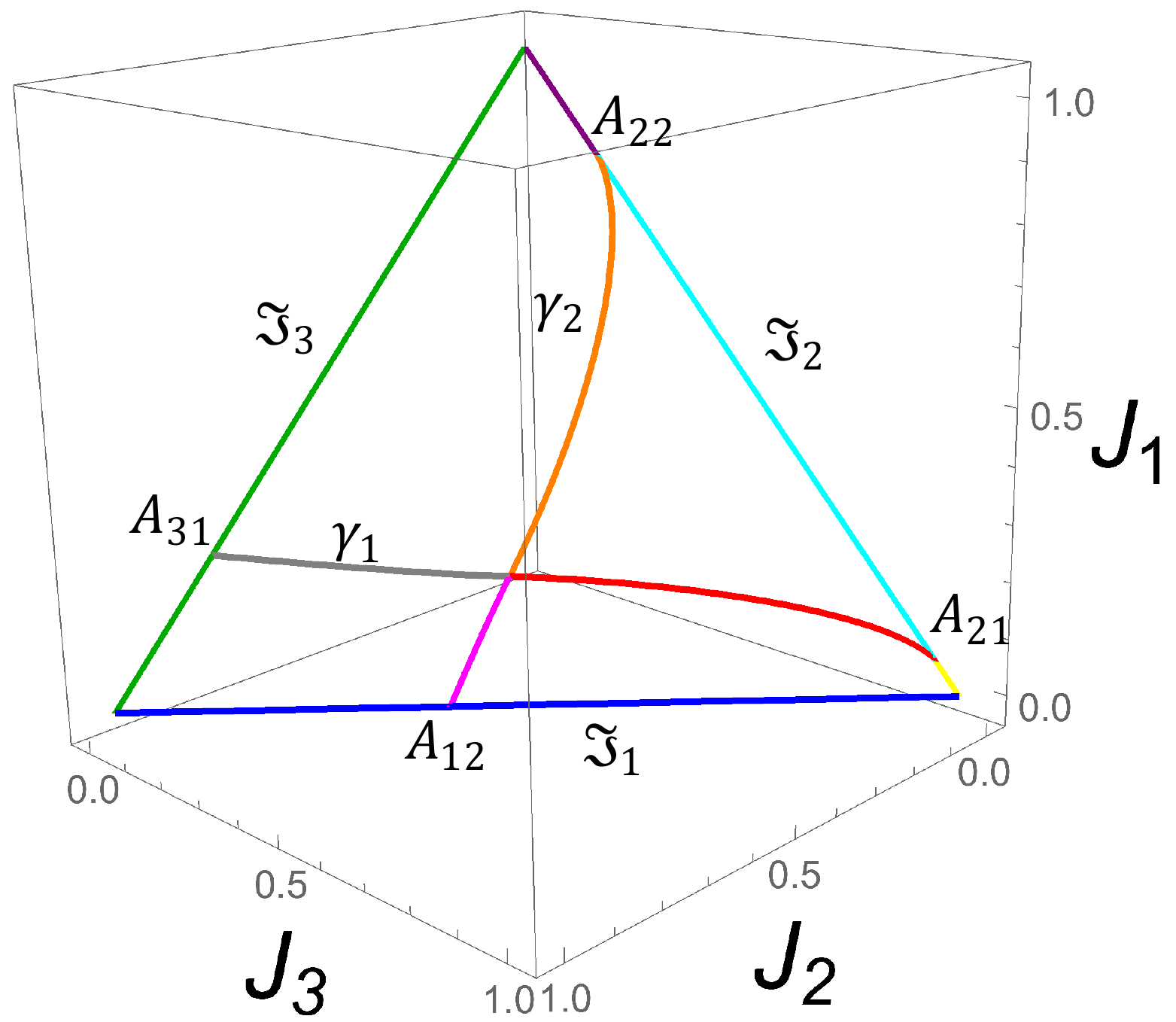} $\quad$
\includegraphics[width=7cm]{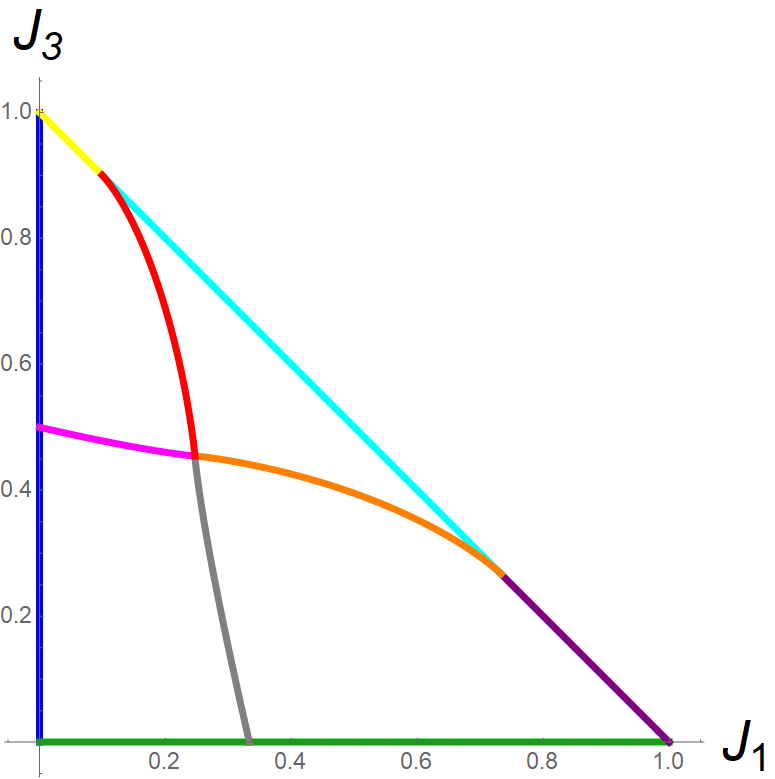}
\par\end{centering}
\caption{a) Action map of the ellipsoidal integrable system with $(e_{1},e_{2},e_{3},e_{4})=(1,2,5,8)$. b) Corresponding reduced action map $(J_1,J_3)$ \label{fig:Action map}}
\end{figure}
The action map $(J_1,J_2,J_3)$ is equivalent to the map $(J_1,J_1+J_2+J_3,J_3)$ by a uni-modular transformation. Since $J_1+J_2+J_3=1$ for the reduced system on $S^2\times S^2$, we have 2 actions only, ie. $(J_1,J_3)$. By using similar transformations, any pair $(J_i,J_j)$ can be chosen as the actions for the reduced system. The resulting image of the corresponding action map is the projection of the action map in Figure \ref{fig:Action map} onto the $J_i J_j$ plane.

\begin{cor}
    A possible set of actions for the reduced ellipsoidal integrable system is $(J_1,J_3)$. The image of the reduced space $S^2\times S^2$ under this map is a right-angled isosceles triangle obtained from projecting Figure \ref{fig:Action map} onto the $J_2=0$ plane.
\end{cor}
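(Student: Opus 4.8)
The plan is to obtain this directly from Theorem~\ref{action thm} together with Lemma~\ref{Action lemma} and the remarks immediately preceding the corollary. First I would record that on the reduced space we have $2h=1$, so by Lemma~\ref{Action lemma} the three continuous actions satisfy the affine constraint $J_1+J_2+J_3=1$. Hence, as already noted, the unimodular change of coordinates $(J_1,J_2,J_3)\mapsto(J_1,\,J_1+J_2+J_3,\,J_3)$ turns the middle component into the constant $1$, so the genuinely non-trivial data of the reduced two-degree-of-freedom system is carried by the remaining pair $(J_1,J_3)$. Their functional independence on $S^2\times S^2$ follows because $(J_1,J_2,J_3)$ has rank two almost everywhere (Lemma~\ref{def:action}) and discarding the dependent coordinate $J_2=1-J_1-J_3$ does not drop the rank; thus $(J_1,J_3)$ is a legitimate complete set of actions for the reduced system, and of course the same argument works for any pair $(J_i,J_j)$.

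Next I would identify the composite map ``action map, then forget $J_2$'' with the linear projection $\mathbb{R}^3\to\{J_2=0\}$ along the $J_2$-direction. Restricted to the plane $\Pi:=\{J_1+J_2+J_3=1\}$ this projection is a bijection onto its image, since $\Pi$ is the graph $J_2=1-J_1-J_3$ over the $(J_1,J_3)$-plane. By Theorem~\ref{action thm} the image of $S^2\times S^2$ under $(J_1,J_2,J_3)$ is the equilateral triangle $\mathcal{T}=\Pi\cap\{J_1\ge0,\,J_2\ge0,\,J_3\ge0\}$; applying the projection, the image of $S^2\times S^2$ under $(J_1,J_3)$ is exactly the shadow of $\mathcal{T}$ in the $(J_1,J_3)$-plane, which is precisely the projection of Figure~\ref{fig:Action map} onto the $J_2=0$ plane.

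Finally I would compute this shadow explicitly. The vertices of $\mathcal{T}$ are the intersections of $\Pi$ with the coordinate axes, namely $(1,0,0)$, $(0,1,0)$ and $(0,0,1)$; under the affine projection these map to $(1,0)$, $(0,0)$ and $(0,1)$ respectively, and since the projection is affine it carries the filled triangle $\mathcal{T}$ onto the filled triangle with these three vertices. That triangle has its right angle at the origin, with two legs of equal length $1$ lying along the $J_1$- and $J_3$-axes and hypotenuse of length $\sqrt 2$; it is therefore a right-angled isosceles triangle, as claimed. Surjectivity onto the whole filled triangle (not merely its boundary) follows from continuity of the $J_i$ on the connected space $S^2\times S^2$, exactly as in the proof of Theorem~\ref{action thm}.

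I do not expect a serious obstacle here: the statement is essentially a coordinate computation downstream of Theorem~\ref{action thm}. The only points needing a little care are (i) confirming that the two retained actions stay functionally independent after the dependent one is dropped, so that $(J_1,J_3)$ is genuinely an action map for a two-degree-of-freedom system, and (ii) checking that the projection is injective on $\mathcal{T}$ so that no geometric information about the image is lost — both of which are immediate once one uses the constraint $J_1+J_2+J_3=1$.
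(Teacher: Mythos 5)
Your proposal is correct and follows essentially the same route as the paper: both arguments rest on the constraint $J_1+J_2+J_3=\sqrt{2h}=1$ from Lemma~\ref{Action lemma}, discard the dependent action, and identify the image of $(J_1,J_3)$ with the projection of the equilateral triangle $\mathcal{T}$ of Theorem~\ref{action thm} onto the $J_2=0$ plane. The only difference is presentational: the paper phrases the step as symplectic reduction by the flow of the global action $J_1+J_2+J_3$ (fix the action, collapse the conjugate angle), while you make the projection and the vertex computation $(1,0,0),(0,1,0),(0,0,1)\mapsto(1,0),(0,0),(0,1)$ explicit, which is a harmless and arguably clearer verification of the right-angled isosceles claim.
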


\begin{proof}
    The reduction can be done in appropriate action variables directly. Since $J_1 + J_2 + J_3$ is a global action variable that is equal to the square root of twice the Hamiltonian, reduction means the following two things. Fix the action (i.e.~fix the energy), and quotient by its flow. The flow of this action only changes it's conjugate angle, and so the quotient identifies this angle to a point. The remaining system with two degrees of freedom has action variables $J_1, J_3$, and the image of the action map of the reduced system is the projection of the ``spatial'' fixed energy triangle in Figure~\ref{fig:Action map}~a) onto the appropriate coordinate plane in Figure~\ref{fig:Action map}~b).
\end{proof}

The choice of which action variable to present the reduced system in is somewhat arbitrary, and we prefer not to make any choice and hence keep showing the ``spatial'' picture of the triangle in $J_1J_2J_3$-space in the following section, even when discussing the reduced system on $S^2\times S^2$. In general an integrable system can be represented by its energy surface in action space, which means the surface $H(J_1, J_2, J_3) = h$, which will depend on $h$, and may not even be continuous. However, in our setting we have simplest possible case of a maximally superintegrable system for which $H(J_1+J_2+J_3) = h$, and so the triangle in action space occurs for every superintegrable system for which globally continuous actions can be defined.

The position of the hyperbolic-hyperbolic point in the image of the action map (the intersections of the lines $\gamma_1$ and $\gamma_2$) is uniquely determined by the reduced parameters $1 < a < b$ of the system. In fact, the map $(1/b, a/b) \mapsto \frac{2}{\pi}(\sin^{-1}\sqrt{1/b}, \cos^{-1}\sqrt{a/b})$ maps the triangle in parameter space Figure~\ref{fig:iab}~a) to the triangle in action space Figure~\ref{fig:Action map}~b). The involution in parameter space \eqref{eq:involution} becomes the reflection across the diagonal $(J_1, J_3) \mapsto (J_3, J_1)$.

Performing the affine transformation of $e_i$ directly in the action integral and applying the same transformation to the integration variable $s$ does change the integral. However, then also transforming $(\eta_1, \eta_2)$ according to Lemma~\ref{eta lemma} recovers the original integral, as expected. 

\section{Degenerate Systems on $S^{2}\times S^{2}$}\label{sec:degen}

In this section, we study all systems arising from separating the
geodesic flow in degenerate coordinates on $S^{3}$. We begin by focusing
on the following $3$ systems: prolate $(1\ (2\ 3)\ 4)$, oblate $(1\ 2\ (3\ 4))$
and Lam\'{e} $(1\ (2\ 3\ 4))$. These correspond to the edges of
the Stasheff polytope. Further degenerations of these coordinates,
cylindrical $((1\ 2)(3\ 4))$ and two forms of spherical $(1\ (2\ (3\ 4)),(1\ ((2\ 3)\ 4))$
form the corners of the polytope.

It will be shown that the integrable systems (reduced and un-reduced) corresponding to these degenerate coordinate systems
can also be obtained by smoothly deforming their ellipsoidal counterparts. Thus, we will  establish the analogue of the result by  Sch\"{o}bel and Veselov \cite{Schoebel2014}, namely that the correct moduli space for this family of integrable systems is the Stasheff polytope.

The main feature of the degenerate systems corresponding to the lower-dimensional faces of the Stasheff polytope is the appearance of global symmetries, in the case of $S^3$ either $SO(2)$ and $SO(3)$.
The results of this section described in detail below can be sumarised in the following theorem.
Consider the designation of a separable coordinate system on $S^3$ by pairs of nested brackets inserted into 4 objects as shown in \figone.

\begin{thm}
    For each pair of brackets that enclose two adjacent members, the corresponding (reduced) integrable system has an $SO(2)$ symmetry. For each pair of brackets that enclose three adjacent members the corresponding (reduced) integrable system has a global $SO(3)$ symmetry.
The generic ellipsoidal integrable system with quadratic integrals degenerates to an integrable system with quadratic integrals. If there is an $SO(k)$ symmetry the corresponding quadratic integral
is replaced by its square root. For $SO(2)$ this gives a global $S^1$ action, while for $SO(3)$ this gives an almost global $S^1$ action. 
\end{thm}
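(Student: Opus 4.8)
The plan is to prove the theorem by a systematic case analysis over the faces of the Stasheff polytope $K^4$, treating each degeneration type (prolate, oblate, Lam\'e, spherical, cylindrical) separately but exploiting the unified mechanism that a coincidence of semi-major axes $e_i$ forces a coordinate degeneration accompanied by an enhanced linear symmetry of the angular momentum vector $\bm L$. First I would establish the correspondence between bracketing and coinciding parameters: a pair of brackets enclosing adjacent members $x_m,\dots,x_{m+j}$ corresponds in the ellipsoidal picture \eqref{eq:def of ellipsoidal coordinates} to the confluent limit $e_m = e_{m+1} = \dots = e_{m+j}$, so that the attached sphere $S^{j}$ becomes a round sphere with the rotation group $SO(j+1)$ acting on the corresponding block of Cartesian coordinates. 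This $SO(j+1)$ action on $\mathbb{R}^4$ lifts to $T^*S^3$ preserving $H = \tfrac12\bm y\cdot\bm y$, hence descends through the symplectic reduction of Section~\ref{sec:S3-4} to an action on $S^2\times S^2$; for $j=1$ this is the $SO(2)$ claimed, and for $j=2$ the $SO(3)$.

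Next I would verify that these symmetries are actually symmetries of the reduced integrable system, i.e.\ that they preserve the limiting integrals obtained from $(\eta_1,\eta_2)$. Using the explicit formulas \eqref{eq:Separation constants ellipsoidal}, when $e_m = \dots = e_{m+j} =: c$ the integrals $\eta_1,\eta_2$ become symmetric functions of the angular momenta $\ell_{pq}$ with $p,q$ in the degenerate block, so they are manifestly $SO(j+1)$-invariant when expressed via the Casimir $\sum_{p<q \text{ in block}} \ell_{pq}^2$ of that $\mathfrak{so}(j+1)$ subalgebra. Here is the key point producing the square root: in the confluent limit the quadratic integral that "should" be there acquires the structure $(\text{linear combination of } \eta_i) = \mu^2 + (\text{rest})$ where $\mu$ is itself linear in the $\ell_{pq}$ for an $SO(2)$ block (namely $\mu = \ell_{m,m+1}$, the generator of the $SO(2)$), while for an $SO(3)$ block the relevant object is the length $|\bm J|$ of the $\mathfrak{so}(3)$ momentum $\bm J = (\ell_{m,m+1}, \ell_{m,m+2}, \ell_{m+1,m+2})$, whose square $|\bm J|^2$ appears as a quadratic integral but whose square root $|\bm J|$ generates the (almost global, since it fails to be smooth on $\bm J = 0$) $S^1$-action by simultaneous rotation. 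I would make this precise by writing out, for each degeneration, the analogue of the factorization \eqref{eq:psq new} / the trace formula \eqref{eq:Trace formula-1-1}: a repeated factor $(\lambda - c)^{k}$ in the denominator forces the numerator to share that factor, and dividing it out leaves a lower-degree spectral problem whose "missing" integral is precisely the extracted $\sqrt{\cdot}$.

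To show the degenerate systems arise by smooth deformation of the ellipsoidal one—and that the whole family is organized by $K^4$—I would take the limit $e_i \to e_j$ directly in the compatible Poisson pencil $\{\cdot,\cdot\}_{\lambda I - E}$ of Section~\ref{sec:S3-5}: the matrix $E$ degenerates but the pencil construction of Proposition~\ref{fact1} survives (one simply has a multiple eigenvalue), and Proposition~\ref{Bolsinov Theorem} continues to locate the critical points, now with the caveat that at $\lambda = c$ the drop-in-rank locus is larger, which is exactly the geometric signature of the $SO(k)$-reduced subsystem. Finally, that a quadratic integrable system degenerates to a quadratic integrable system is immediate from the fact that $\eta_1,\eta_2$ are polynomial of degree $2$ in $\bm L$ and remain so under $e_i\mapsto$ limit; the only subtlety, which I expect to be the main obstacle, is a clean invariant statement of "replaced by its square root"—one must show that after the degeneration the $S^1$-momentum $\mu$ (or $|\bm J|$) is genuinely functionally independent from the remaining quadratic integral and together they still form an integrable system on $S^2\times S^2$, and that the $S^1$-action it generates is global for $SO(2)$ but only almost global (singular on a codimension-$\geq 2$ fixed locus) for $SO(3)$. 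I would handle this by checking independence generically via the explicit reduced formulas and by identifying the fixed locus of the $SO(3)$-action with $\bm J = 0$, which has the right codimension and is where $|\bm J|$ fails to be smooth, exactly as in the Neumann and Kepler degenerations of \cite{Dullin2012,Dawson2022}.
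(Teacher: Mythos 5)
Your overall strategy matches the paper's: the theorem is proved there by the case-by-case analysis of Section~\ref{sec:degen}, where each bracket is identified with a confluence of the parameters $e_i$, the round sub-sphere carries an $SO(j+1)$ action on the corresponding block of Cartesian coordinates, the limiting integrals are computed from \eqref{eq:Separation constants ellipsoidal}, and the square root of the block's quadratic momentum is extracted to obtain the (almost) global $S^1$ action. For the two-parameter confluences (prolate, oblate, cylindrical, spherical) your outline is a faithful blueprint of that argument. Two points, however, would fail as written.

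First, the triple confluence cannot be taken naively as $e_m=e_{m+1}=e_{m+2}=c$. Substituting this directly into \eqref{eq:Separation constants ellipsoidal} makes \emph{both} $\eta_1$ and $\eta_2$ affine functions of the single quantity $F_L=\ell_{12}^2+\ell_{13}^2+\ell_{14}^2$, so the second integral (the Euler-top Hamiltonian $G_L$) is lost and the limit is no longer an integrable system of rank two. One must take a \emph{scaled} limit — the paper works projectively, sending $e_1\to-\infty$ via $(e_1,e_2,e_3,e_4)=(-1/\epsilon,f_1,f_2,f_3)$ with the rescalings of $(s_1,p_1)$ and of $(\eta_1,\eta_2)$ by $1/\epsilon$ in \eqref{eq:sub lame} — to recover $G_L$ from the subleading term. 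This is the blow-up of the Lam\'e corner of parameter space; your remark about dividing the repeated factor out of $\psi(\lambda)$ gestures at it but does not supply the rescaling, without which the "lower-degree spectral problem" you extract has only one independent integral.

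Second, your claim that in the confluent limit the integrals "are manifestly $SO(j+1)$-invariant" is false for $j=2$: only $F_L=2h-|\bm{J}|^2$ with $\bm{J}=(\ell_{23},\ell_{24},\ell_{34})$ is invariant, while $G_L=f_1\ell_{34}^2+f_2\ell_{24}^2+f_3\ell_{23}^2$ is not rotation-invariant. What actually holds, and what the theorem's loose wording is meant to capture, is that $G_L$ is a \emph{collective} Hamiltonian, a function of the $\mathfrak{so}(3)$ momentum $\bm{J}$ alone, hence Poisson-commutes with the Casimir $|\bm{J}|^2$ and with $|\bm{J}|=\sqrt{2h-F_L}$; this is what makes the almost-global $S^1$ a symmetry of the reduced system. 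Relatedly, the locus where that $S^1$ fails is the zero level $\bm{J}=0$ of the $\mathfrak{so}(3)$ momentum map (an antidiagonal $S^2$ inside $S^2\times S^2$, the fibre over the degenerate point $T_{123}$), not the fixed-point set of the $SO(3)$ action, which is empty on the Casimir level $2h=1$.
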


In particular, the oblate, prolate, and spherical systems have one global $S^1$ action each, the cylindrical system has two global $S^1$ actions, and the spherical and the Lam\'e system have an almost-global $S^1$ action each. In addition, we find that the prolate system is generalised semi-toric and the cylindrical system is toric but the $S^1 \times S^1$ action is not effective. By almost global $S^1$ action we mean that the action fails in the preimage of an isolated point of the image of the momentum map. The corresponding spherical singularity 
in the spherical system \cite{RonanThesis} also appears in the Lam\'e system.

\subsection{Prolate Coordinates }

We begin by considering prolate coordinates. It will
be shown that the corresponding integrable system is generalised semi-toric and has non-trivial monodromy. In addition, the global action triangle is half of the semi-toric polygon invariant.

\subsubsection{Separation of Variables}

Prolate coordinates on $S^{3}$, denoted by $(1\ (2\ 3)\ 4)$, are
a degeneration of ellipsoidal coordinates arising from setting the
middle two semi major axes equal, i.e. $e_{2}=e_{3}$. We normalise the $e_{j}$ according to $(e_{1},e_{2}=e_{3},e_{4})=(0,1=a,b)$.
From \cite{Kalnins1986}, an explicit representation of prolate coordinates
is 
\[
\begin{aligned}x_{1}^{2} & =\frac{s_{1}s_{3}}{b}, &  & x_{2}^{2}=-\frac{\left(s_{1}-1\right)s_{2}\left(s_{3}-1\right)}{b-1},\\
x_{3}^{2} & =\frac{\left(s_{1}-1\right)\left(s_{2}-1\right)\left(s_{3}-1\right)}{b-1}, &  & x_{4}^{2}=\frac{\left(b-s_{1}\right)\left(b-s_{3}\right)}{(b-1)b},
\end{aligned}
\]
where $0\le s_{1},s_{2}\le1\le s_{3}\le b$. Since $s_2$ is an ignorable coordinates, the Hamilton Jacobi equation can be separated easily to give integrals
$(2H,G_{pro},\ell_{23})$ where $G_{pro}=b\ell_{12}^{2}+b\ell_{13}^{2}+\ell_{14}^{2}$. 
The corresponding momenta are

\begin{equation}
\begin{aligned}p_{i}^{2} & =\frac{-2hs_{i}^{2}+(g+2h+(b-1)l^{2})s_{i}-g}{4s_{i}(s_{i}-b)(s_{i}-1)^{2}}, &  &  & p_{2}^{2} & =\frac{l^{2}}{4s_{2}(1-s_{2})}\end{aligned}
\label{eq:sep momenta prolate}
\end{equation}
where $i\in\{1,3\}$ and $(l,g)$ are the
values of $\ell_{23}$ and $G_{pro}$, respectively. We call the triple $(2H,\ell_{23},G_{pro})$ on
$T^{*}S^{3}$ the $1$-parameter family of prolate integrable systems. Similarly, $(\ell_{23},G_{pro})$ gives a $1$-parameter family of reduced prolate integrable systems on $S^2\times S^2$.

Note that
we have chosen $\ell_{23}$ as an integral since it is naturally a
global $S^{1}$ action, unlike its square.
The quadratic integrals can be obtained as a limit of the ellipsoidal integrable system.

\begin{lem}
\label{Prolate degen proof }The integrals $\left(\ell_{23}^2,G_{pro}\right)$
as well as the separated momenta \eqref{eq:sep momenta prolate} can
be obtained by smoothly degenerating their ellipsoidal counterparts
\eqref{eq:Separation constants ellipsoidal} and \eqref{eq:psq ellipsoidal-1}.
\end{lem}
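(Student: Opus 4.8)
The plan is to take the degeneration limit $e_2 \to e_3$ directly inside the formulas \eqref{eq:Separation constants ellipsoidal} and \eqref{eq:psq ellipsoidal-1} and verify that the limiting objects are precisely the prolate integrals and momenta in \eqref{eq:sep momenta prolate}. Concretely, I would fix the normalisation $(e_1,e_2,e_3,e_4)=(0,1-\epsilon,1+\epsilon,b)$ with $\epsilon \to 0$ (this is an affine-equivalent choice, justified by Lemma~\ref{eta lemma}, but keeps $e_2 e_3 \to 1$ clean), and examine $\eta_1(\bm L), \eta_2(\bm L)$ from \eqref{eq:Separation constants ellipsoidal} as $\epsilon \to 0$. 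The term $\eta_2 = \sum_{i<j}\ell_{ij}^2 \prod_{k\ne i,j} e_k$ contains the summand $\ell_{23}^2\, e_1 e_4 = 0$ because $e_1=0$; so a naive limit loses the information in $\ell_{23}$. The key observation is that the correct degenerate integrals are obtained not from $\eta_1,\eta_2$ themselves but from a suitable linear combination whose coefficients blow up as $\epsilon\to 0$ — exactly the mechanism already used in Lemma~\ref{eta lemma}, since we are allowed to take linear combinations of the integrals (the reduced system has no distinguished Hamiltonian).

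First I would write $e_2 = 1-\epsilon$, $e_3 = 1+\epsilon$ and expand $\eta_1, \eta_2$ in $\epsilon$. Since $e_2 e_3 = 1-\epsilon^2$ and $e_2 + e_3 = 2$, the $\epsilon$-dependence in the symmetric functions of the $e_k$ appearing in \eqref{eq:Separation constants ellipsoidal} is through $\epsilon^2$ only, except in the single combination that distinguishes $e_2$ from $e_3$. I expect that $\lim_{\epsilon\to 0}\eta_1 = 2H + (b-1)\ell_{23}^2 + \dots$ reproduces, up to the Casimir $2H$, the coefficient $g + 2h + (b-1)l^2$ appearing in the numerator of $p_i^2$ in \eqref{eq:sep momenta prolate}, while $\lim_{\epsilon\to 0}\eta_2$ reproduces $g$ (recall $G_{pro} = b\ell_{12}^2 + b\ell_{13}^2 + \ell_{14}^2$, and one checks that $\eta_2 \to b(\ell_{12}^2 + \ell_{13}^2) + \ell_{14}^2$ using $e_1 = 0$, $e_4 = b$). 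To recover $\ell_{23}^2$ itself one extracts the $O(\epsilon^2)$ coefficient: $\eta_2 = \eta_2|_{\epsilon=0} - \epsilon^2 \ell_{23}^2 + O(\epsilon^4)$, so $\ell_{23}^2 = \lim_{\epsilon\to 0}\epsilon^{-2}(\eta_2|_{\epsilon=0} - \eta_2)$; since $\eta_2|_{\epsilon=0}$ is $\epsilon$-independent this is again a (degenerating) linear combination of the ellipsoidal integrals, hence the prolate system is a smooth degeneration of the ellipsoidal one in the sense required.

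For the separated momenta, I would substitute $e_1 = 0$, $e_2 = e_3 = 1$, $e_4 = b$ into $A(z) = \prod_k(z-e_k) = z(z-1)^2(z-b)$ and into $R(z) = 2hz^2 - \eta_1^* z + \eta_2^*$ in \eqref{eq:psq ellipsoidal-1}, and check that, after the identification $\eta_1^* \to g + 2h + (b-1)l^2$ and $\eta_2^* \to g$ established above, $p_i^2 = -R(s_i)/(4A(s_i))$ becomes exactly the first formula in \eqref{eq:sep momenta prolate} for $i\in\{1,3\}$. The third momentum $p_2^2 = l^2/(4 s_2(1-s_2))$ must be handled separately: in the ellipsoidal picture $s_2$ ranged over $[e_2,e_3]$, which collapses to the point $1$ as $\epsilon\to 0$, so the ``missing'' separated equation for the ignorable coordinate emerges from rescaling $s_2 = 1 + \epsilon\,\tau$ and taking the limit of $p_2^2 = -R(s_2)/(4A(s_2))$, where the double pole of $A$ at $e_2=e_3$ collides with the two simple roots; the finite part yields $l^2/(4s_2(1-s_2))$. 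I expect this rescaling of the collapsing coordinate to be the main technical obstacle — one must show the limit exists and gives precisely the stated Hamilton--Jacobi equation for the cyclic variable, rather than merely ``$0 = 0$'', and track how the factor $\ell_{23}^2$ reappears with the right normalisation. Everything else is a direct substitution and comparison of rational functions of $z$, which I would verify by matching numerators after clearing denominators.
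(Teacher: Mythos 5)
Your overall strategy is the same as the paper's: substitute the degeneration $e_3\to e_2$ into \eqref{eq:Separation constants ellipsoidal} and \eqref{eq:psq ellipsoidal-1}, expand in $\epsilon$, and rescale the collapsing coordinate $s_2$. However, the specific mechanism you propose for recovering $\ell_{23}^2$ is wrong, and it is also unnecessary. With $(e_1,e_2,e_3,e_4)=(0,1-\epsilon,1+\epsilon,b)$ one has
\[
\eta_2=(1+\epsilon)b\,\ell_{12}^2+(1-\epsilon)b\,\ell_{13}^2+(1-\epsilon^2)\ell_{14}^2,
\]
so $\eta_2|_{\epsilon=0}-\eta_2=\epsilon b(\ell_{13}^2-\ell_{12}^2)+\epsilon^2\ell_{14}^2$; your claimed expansion $\eta_2=\eta_2|_{\epsilon=0}-\epsilon^2\ell_{23}^2+O(\epsilon^4)$ is false, and $\lim_{\epsilon\to0}\epsilon^{-2}(\eta_2|_{\epsilon=0}-\eta_2)$ diverges rather than giving $\ell_{23}^2$. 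No degenerating (blowing-up) combination is needed here: your own expansion of $\eta_1$ already contains the answer, since in the limit $\eta_1=2H+G_{pro}+(b-1)\ell_{23}^2$ and $\eta_2=G_{pro}$, whence $\ell_{23}^2=\frac{1}{b-1}(\eta_1-\eta_2-2H)$ is a \emph{finite} linear combination of the limiting integrals and the Casimir. This is exactly what the paper's proof uses. (Coefficients that blow up are needed for the Lam\'e degeneration, not the prolate one.)

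On the separated momenta, you correctly identify the collapsing interval $s_2\in[e_2,e_3]$ as the technical point, but rescaling the coordinate alone is not enough: with $s_2=e_2+\epsilon\tilde s_2$ one finds $A(s_2)=O(\epsilon^2)$ while $R(s_2)\to -(b-1)l^2\neq0$, so $p_2^2=-R(s_2)/(4A(s_2))$ diverges like $\epsilon^{-2}$ and has no finite part. One must also rescale the momentum, $p_2=\tilde p_2/\epsilon$, so that $(s_2,p_2)\mapsto(\tilde s_2,\tilde p_2)$ is canonical; then $\tilde p_2^{\,2}=\epsilon^2p_2^2\to l^2/(4\tilde s_2(1-\tilde s_2))$ as required. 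The check for $p_1^2,p_3^2$ by direct substitution of $A(z)=z(z-1)^2(z-b)$ and the identified values $\eta_1^*=g+2h+(b-1)l^2$, $\eta_2^*=g$ is correct and matches \eqref{eq:sep momenta prolate}.
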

\begin{proof}
The transformation from ellipsoidal to prolate coordinates
is given by 
\begin{equation}
\begin{aligned}e_{3} & =e_{2}+\epsilon &  &  & s_{2} & =e_{2}+\epsilon\tilde{s}_{2} & & &p_2=\frac{\tilde{p}_2}{\epsilon}
\end{aligned}
\label{eq:legit pro trans}
\end{equation}

in the limit $\epsilon\to0$ where $\tilde{s}_{2}\in[0,1]$. The transformation from $(s_2,p_2)$ to $(\tilde{s_2},\tilde{p_2})$ is canonical.

Let $(\tilde{\eta}_{1},\tilde{\eta}_{2})=\left.(\eta_{1},\eta_{2})\right|_{e_{3}=e_{2}+\epsilon}.$ Substituting (\ref{eq:legit pro trans}) into (\ref{eq:Separation constants ellipsoidal})
and taking the limit gives 
\[
\begin{aligned}G_{pro} & =\tilde{\eta}_{2} &  &  & \ell_{23}^{2} & =\frac{1}{b-1}(\tilde{\eta}_{1}-\tilde{\eta}_{2}-2H)\end{aligned}
\]
where we have normalised the $e_{j}$ by setting $(e_{1},e_{2}=e_{3},e_{4})=(0,1,b)$.

For the separated equations, we insert (\ref{eq:legit pro trans})
into (\ref{eq:psq ellipsoidal-1}) and expand about $\epsilon=0$
to obtain
\begin{equation}
\begin{aligned}\tilde{p_{i}}^{2} & =\frac{\tilde{\eta}_{1}+s_{i}\left(s_{i}-\tilde{\eta}_{2}\right)}{4\left(e_{1}-s_{i}\right)\left(s_{i}-e_{2}\right){}^{2}\left(s_{i}-e_{4}\right)}+O(\epsilon) &  &  & \tilde{p}_{2}^{2} & =\frac{e_{2}(e_{2}-\tilde{\eta}_{2})+\tilde{\eta}_{1}}{4\tilde{s}_{2}(\tilde{s}_{2}-1)(e_{1}-e_{2})(e_{2}-e_{4})}+O(\epsilon).\end{aligned}
\label{eq:V pro}
\end{equation}
Taking the limit of (\ref{eq:V pro}) as $\epsilon\to0$ and dropping
the tildes gives (\ref{eq:sep momenta prolate}).
\end{proof}

In the prolate limit, $F_{2}$ and $F_{3}$ become singular. However,
multiplying both integrals by $(e_{3}-e_{2})$ gives 
\[
\lim_{\epsilon\to0}(e_{3}-e_{2})F_{2}=-\lim_{\epsilon\to0}(e_{3}-e_{2})F_{3}=\ell_{23}^{2}.
\]
The other two integrals $F_{1}$ and $F_{4}$ degenerate smoothly
to 
\[
\begin{aligned}F_{1,pro}=\frac{-G_{pro}}{b}, &  & F_{4,pro} & =\end{aligned}
\frac{2Hb-G_{pro}-b\ell_{23}^{2}}{(b-1)b}.
\]

\subsubsection{Critical Points and Momentum Map}
Since the integrals of the prolate system are significantly simpler than those of the ellipsoidal system, we can easily compute the critical points and values directly. However, it is interesting to note that we can also use the method of compatible Poisson structures with the matrix
$C=\text{diag}(0,1,1+\epsilon,b)$ for $0<\epsilon<b-1$ for this computation.
\\
Using Proposition \ref{fact1} with this $C$,
we get 
\[
\psi_{pro}(\lambda)=2\frac{-2h\lambda^{2}+I_{1}\lambda+I_{2}}{\lambda(\lambda-1)(\lambda-1-\epsilon)(\lambda-b)}
\]
with $(I_{1},I_{2})=(G_{pro}+2h+(b-1)\ell_{23}^{2}+\epsilon(\ell_{12}^{2}+\ell_{14}^{2}+\ell_{24}^{2}),G_{pro}+\epsilon(b\ell_{12}^{2}+\ell_{14}^{2}))$. While the integrals $I_1$ and $I_2$ may appear complicated, the system $(I_1,I_2)$ is equivalent to the system $(G_{pro},\ell_{23}^2)$ in the limit as $\epsilon\to 0$. We can find the critical points and values of the system $(I_1,I_2)$ then taking the
limit as $\epsilon\to0$ at the end of calculations to recover the correct results for the system $(G_{pro},\ell_{23}^2)$.

\begin{figure}
\begin{centering}
\includegraphics[width=7cm,height=6cm]{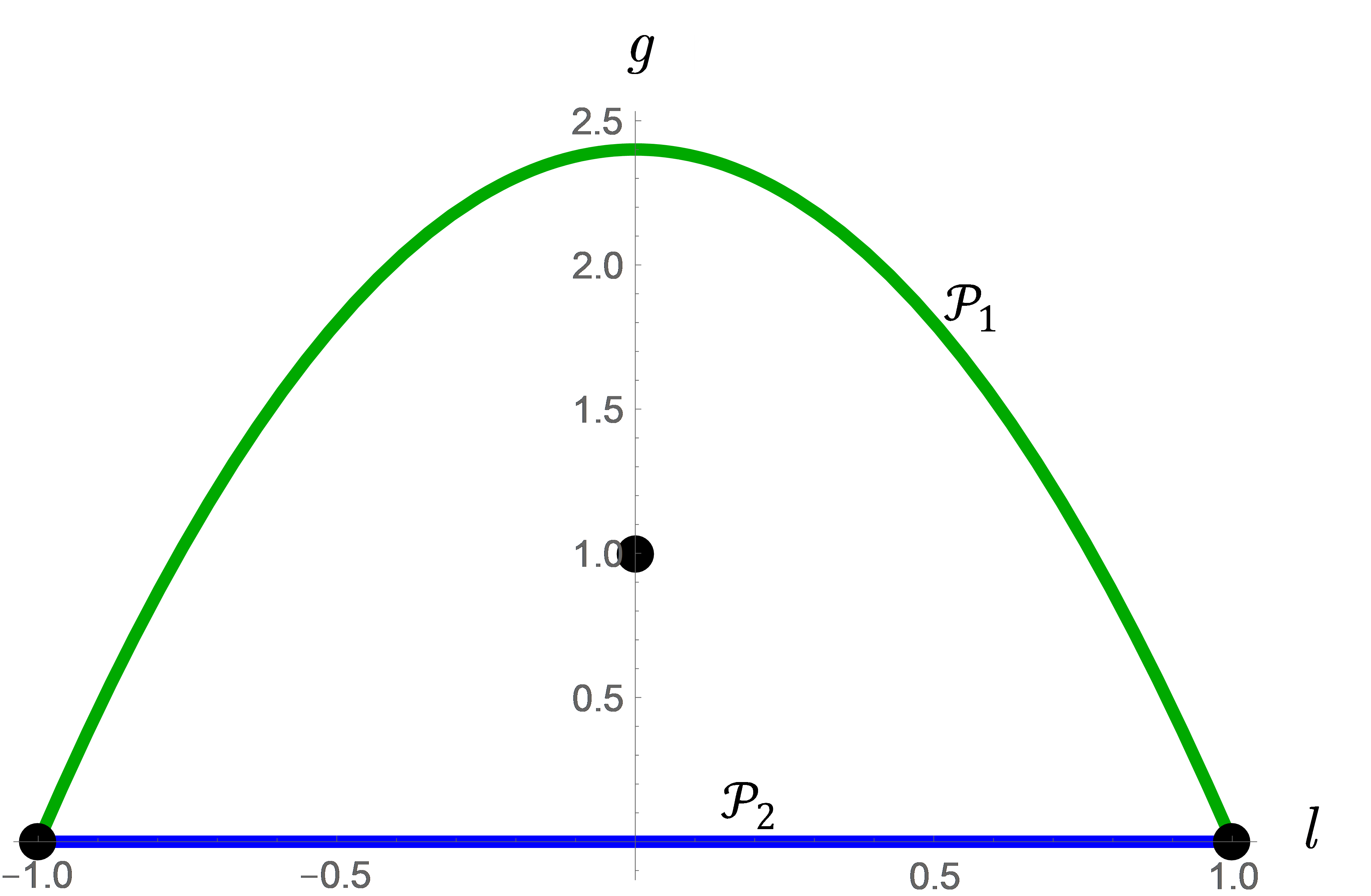}\quad\includegraphics[width=7cm,height=6cm]{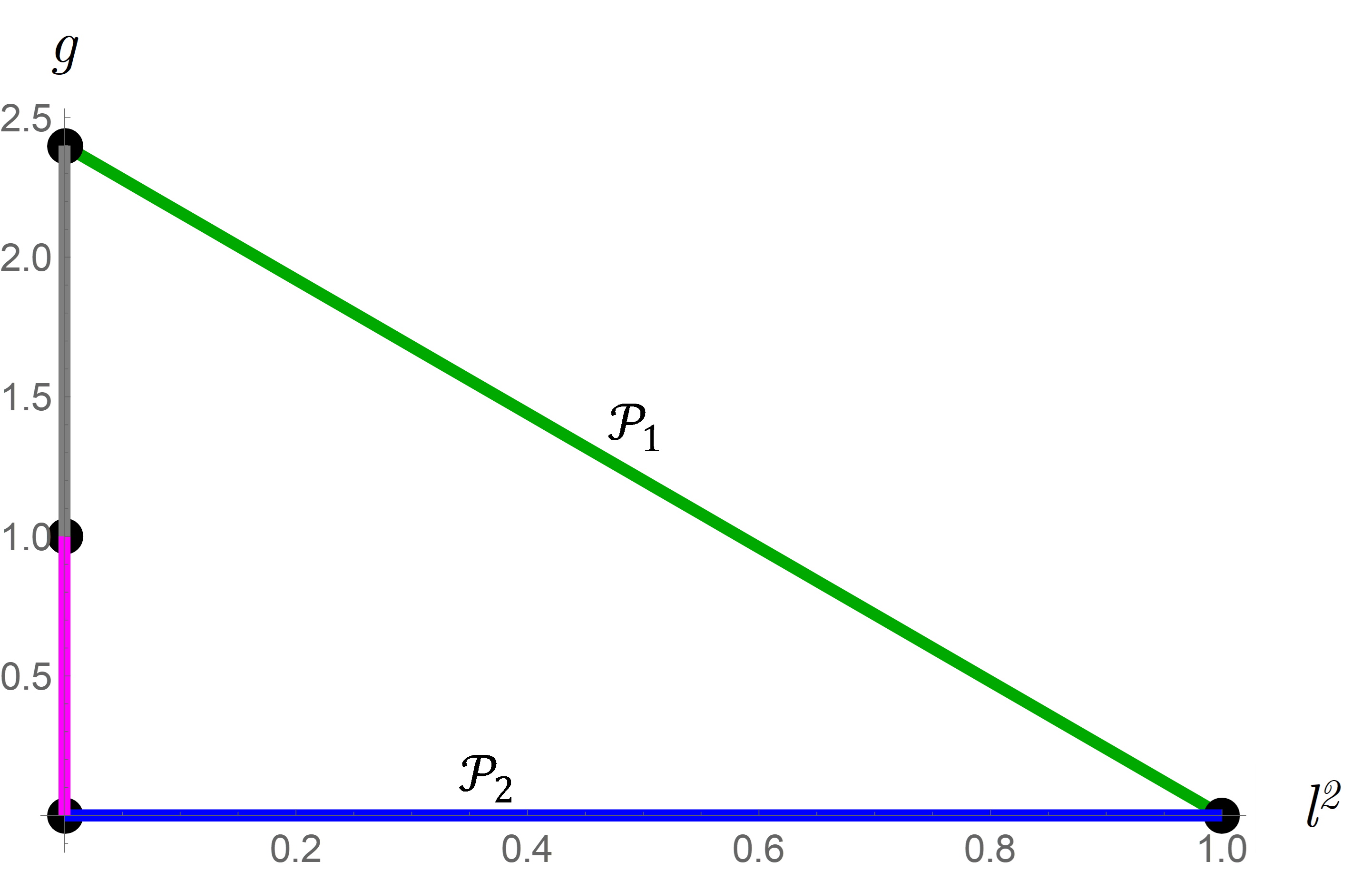}\caption{a) ``Opened'' momentum map with $b=2.4$. b) ``Unopened'' momentum
map using $\ell_{23}^{2}$ as an integral. \label{fig:Prolate Root and Momentum map}}
\par\end{centering}
\end{figure}

\begin{prop}
The momentum map for the reduced prolate integrable system is the region bounded
by the curve $\mathcal{P}_{1}:G_{pro}=b(1-\ell_{23}^{2})$ and line
$\mathcal{P}_{2}:G_{pro}=0$ shown in Figure~\ref{fig:Prolate Root and Momentum map}~a). There is an isolated critical value at $(0,1)$. 
\end{prop}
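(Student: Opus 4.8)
The plan is to combine a separation-of-variables analysis --- which determines exactly which values $(l,g) = (\ell_{23}, G_{pro})$ admit genuine motion and hence pins down the two boundary curves --- with a direct identification of the critical sets of the momentum map $(\ell_{23}, G_{pro})$, which both confirms that the boundary consists of critical values and produces the isolated interior critical value. I fix $2h = 1$ throughout, so that for $i \in \{1,3\}$ the separated momenta \eqref{eq:sep momenta prolate} read $p_i^2 = R(s_i)/\bigl(4 s_i (s_i - b)(s_i-1)^2\bigr)$ with $R(s) = -s^2 + (g + 1 + (b-1)l^2)s - g$; and I note first that $p_2^2 = l^2/\bigl(4 s_2(1-s_2)\bigr) \ge 0$ for every $s_2 \in (0,1)$, so the only obstruction to real motion comes from requiring $p_1^2, p_3^2 \ge 0$ on subintervals of $[0,1]$ and $[1,b]$.

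First I would record $R(0) = -g$, $R(1) = (b-1)l^2 \ge 0$ and $R(b) = (b-1)\bigl(g - b(1-l^2)\bigr)$. Since $R$ is a downward parabola that is nonnegative at $s = 1$, its roots satisfy $r_1 \le 1 \le r_2$; and on both $(0,1)$ and $(1,b)$ the denominator $4 s(s-b)(s-1)^2$ is negative, so $p_i^2 \ge 0$ there forces $R \le 0$, i.e.\ $s \le r_1$ or $s \ge r_2$. Valid $s_1$-motion in $[0,1]$ therefore requires $r_1 > 0$, equivalently $g = r_1 r_2 > 0$ (using $r_2 \ge 1$); valid $s_3$-motion in $[1,b]$ requires $r_2 < b$, equivalently $R(b) < 0$, i.e.\ $g < b(1-l^2)$. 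Conversely, for any $(l,g)$ with $0 < g < b(1-l^2)$ both hold and motion fills $s_1 \in [0,r_1]$, $s_2 \in [0,1]$, $s_3 \in [r_2,b]$, so $(l,g)$ is in the image; since $S^2 \times S^2$ is compact the image is closed, and hence it is exactly the region bounded by $\mathcal{P}_2: G_{pro} = 0$ and $\mathcal{P}_1: G_{pro} = b(1-\ell_{23}^2)$.

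Next I would realise the boundary as critical values and locate the interior one. The locus $G_{pro} = 0$ forces $\ell_{12} = \ell_{13} = \ell_{14} = 0$, a $2$-sphere $\ell_{23}^2 + \ell_{24}^2 + \ell_{34}^2 = 1$ on which $\nabla G_{pro} = 0$; the locus $G_{pro} = b(1 - \ell_{23}^2)$ forces, via $\sum \ell_{ij}^2 = 1$ and $b > 1$, that $\ell_{14} = \ell_{24} = \ell_{34} = 0$, a $2$-sphere $\ell_{12}^2 + \ell_{13}^2 + \ell_{23}^2 = 1$ on which a short computation with $B_{\bm L}$ gives $X_{G_{pro}} = -2b\,\ell_{23}\,X_{\ell_{23}}$; in either case the Hamiltonian vector fields are dependent at every point, so the spheres are critical and map onto $\mathcal{P}_2$ and $\mathcal{P}_1$. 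For the interior value, the fixed-point set of the $S^1$-action generated by $\ell_{23}$ meets $S^2 \times S^2$ in four points; two of them, $\bm L = (0,0,\pm 1,0,0,0)$, satisfy $X_{\ell_{23}} = X_{G_{pro}} = 0$ (rank-$0$ critical points) with value $(\ell_{23},G_{pro}) = (0,1)$, which is interior because $0 < 1 < b$. Equivalently $(l,g) = (0,1)$ is exactly where $R(s) = -(s-1)^2$ acquires a double root at $s = 1 = e_2 = e_3$, i.e.\ where the two cases of the prolate analogue of Corollary~\ref{biham-critical-values} coincide.

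The main obstacle is to show that $(0,1)$ is a \emph{genuine isolated} critical value of focus--focus type --- not merely a point where the separation degenerates --- and that there are no further interior critical values. For the former I would linearise at $\bm L = (0,0,1,0,0,0)$: using $\mathcal{C}_2 = 0$ to eliminate $\ell_{23} = \ell_{13}\ell_{24} - \ell_{12}\ell_{34} + O(3)$ and taking $(\ell_{12}, \ell_{13}, \ell_{24}, \ell_{34})$ as local coordinates, one reads off from $B_{\bm L}$ at the point that $(\ell_{12}, -\ell_{24})$ and $(\ell_{13}, -\ell_{34})$ are Darboux pairs $(q_1,p_1), (q_2,p_2)$, whence the quadratic parts are $\ell_{23} = q_1 p_2 - q_2 p_1$ and $G_{pro} - 1 = (b-1)(q_1^2 + q_2^2) - (p_1^2 + p_2^2)$; a symplectic rescaling and a $45^\circ$ rotation in each plane bring this pair to the standard focus--focus normal form $\bigl(q_1 p_2 - q_2 p_1,\ q_1 p_1 + q_2 p_2\bigr)$, which also re-establishes the non-trivial monodromy announced above. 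For the latter, solving $B_{\bm L}(\nabla G_{pro} - \mu \nabla \ell_{23}) = 0$, i.e.\ $\nabla G_{pro} - \mu \nabla \ell_{23} \in \langle \nabla \mathcal{C}_1, \nabla\mathcal{C}_2\rangle$, shows that every critical point lies on one of the two $2$-spheres or is an $S^1$-fixed point; a minor subtlety is the passage from the compatible-Poisson computation with $C = \operatorname{diag}(0,1,1+\epsilon,b)$, which naturally produces $(G_{pro}, \ell_{23}^2)$ (the ``unopened'' picture of Figure~\ref{fig:Prolate Root and Momentum map}), to the map $(G_{pro}, \ell_{23})$, which must be done so that the fold $\ell_{23} \mapsto \ell_{23}^2$ introduces no spurious critical values.
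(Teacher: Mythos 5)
Your proposal is correct, and it reaches the same conclusions as the paper by a noticeably more direct route. The paper identifies the critical values through the compatible-Poisson machinery (Proposition~\ref{fact1} and Proposition~\ref{Bolsinov Theorem} applied to the perturbed matrix $C=\mathrm{diag}(0,1,1+\epsilon,b)$, with the cases $\lambda=0$, $\lambda=e_4$, and $\lambda\in[1,1+\epsilon]$, followed by a limit $\epsilon\to 0$ and a discussion of ``opening up'' $\ell_{23}^2$ to $\ell_{23}$), and it simply asserts that the remaining interior critical value $(0,1)$ with critical points $\ell_{14}=\pm1$ is of focus--focus type. You instead (i) pin down the image of the momentum map by a root analysis of the separated momenta \eqref{eq:sep momenta prolate} --- showing $0\le g\le b(1-l^2)$ is exactly the condition for real motion, which the paper's proof leaves implicit --- (ii) find the two critical $2$-spheres and the four $S^1$-fixed points by direct computation with $B_{\bm L}$ (an option the paper mentions but does not carry out), and (iii) actually verify the focus--focus normal form at $\bm L=(0,0,\pm1,0,0,0)$ via the local Darboux pairs $(\ell_{12},-\ell_{24})$, $(\ell_{13},-\ell_{34})$ and the quadratic parts $\ell_{23}=q_1p_2-q_2p_1$, $G_{pro}-1=(b-1)(q_1^2+q_2^2)-(p_1^2+p_2^2)$; I checked these computations and they are right. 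What your approach buys is a self-contained proof that the image is exactly the claimed region and that $(0,1)$ is a genuine nondegenerate focus--focus value rather than an artefact of the separation; what the paper's approach buys is uniformity with the treatment of the ellipsoidal, oblate and Lam\'e cases through the single $\lambda$-pencil. The only cosmetic caveat is that your normal-form argument establishes more than the Proposition states (it also underpins the monodromy lemma that follows), so in situ it would be natural to defer that part.
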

\begin{proof}
Critical points of the system $(\ell_{23}^2,G_{pro})$ can be computed directly or by applying Proposition~\ref{Bolsinov Theorem}:
\begin{enumerate}
\item The blue line $\mathcal{P}_{2}:G_{pro}=0$ has $\lambda=0$ and the critical
points are parametrised by $\bm{L}=(0,0,0,\ell_{23},\ell_{24},\ell_{34})$
with $\ell_{23}^{2}+\ell_{24}^{2}+\ell_{34}^{3}=1$ and $\ell_{23}=l$.
These are co-dimension 1 elliptic points.
\item The green line $\mathcal{P}_{1}:G_{pro}=b(1-\ell_{23}^{2})$ in Figure \ref{fig:Prolate Root and Momentum map} b) has $\lambda=a$
and the critical points are parametrised by $\bm{L}=(\ell_{12},\ell_{13},0,\ell_{23},0,0)$
with $\ell_{12}^{2}+\ell_{13}^{2}+\ell_{23}^{3}=1$ and $\ell_{23}=l$.
These are co-dimension 1 elliptic point.
\item The lines $\ell_{23}^2=0$ has both $\lambda=1$, $\lambda=1+\epsilon$ as well as the curve $I_{2}=\frac{I_{1}^{2}}{8h}$ corresponding to the double root $\lambda\in[1,1+\epsilon]$.  These are degenerate critical values of system $(\ell_{23}^2,G_{pro})$. 
\end{enumerate} 
For the reduced prolate integrable system $(\ell_{23},G_{pro})$, we see that the line $\mathcal{P}_2$ remains critical and has $l\in[-1,1]$. The line $P_1$ for $(\ell_{23}^2,G_{pro})$ becomes a parabola for $(\ell_{23},G_{pro})$. The line $\ell_{23}=0$ becomes regular values after changing from $\ell_{23}^2$ to $\ell_{23}$ with the exception of the isolated point $(\ell_{23},G_{pro})=(0,1)$ which has critical
points $\ell_{14}=\pm 1$. This
is a focus-focus point and its fibre on $S^{2}\times S^{2}$ a doubly
pinched torus.
\end{proof}

\subsubsection{Action Map and Monodromy}\label{pro-act}

From (\ref{eq:sep momenta prolate}) and the same reasoning used to
obtain (\ref{eq:ACtions ell dfef}), we have the following formulae
for the actions
\begin{equation}
\begin{aligned}J_{1}=\frac{2}{\pi}\int_{0}^{\min(r_{1},1)}p_{1}ds &  & J_{2} & =\frac{2}{\pi}\int_{0}^{1}p_{2}ds &  &  & J_{3}=\frac{2}{\pi}\int_{\max(1,r_{2})}^{a}p_{3}ds\end{aligned}
\label{eq:action prol}
\end{equation}
where the $p_{k}$ are given in (\ref{eq:sep momenta prolate}).
Here $(r_{1},r_{2})$ are the roots of $p_{1,3}^{2}$ from (\ref{eq:sep momenta prolate})
where $0\le r_{1}\le1\le r_{2}\le b$. Note that $J_{2}$ simplifies
to $\left|\ell_{23}\right|$. Like for the ellipsoidal system, the
prolate actions also satisfy (\ref{eq:action cond el}). The action
map for the prolate system is shown in Figure \ref{fig:Action and polygon invariant}
a) where the black dot corresponding to the focus-focus point is located
at $\frac{2}{\pi}(\sin^{-1}(\frac{1}{b}),0,\frac{\pi}{2}-\sin^{-1}(\frac{1}{b}))$. 

\begin{figure}
\begin{centering}
\includegraphics[width=7cm,height=6cm]{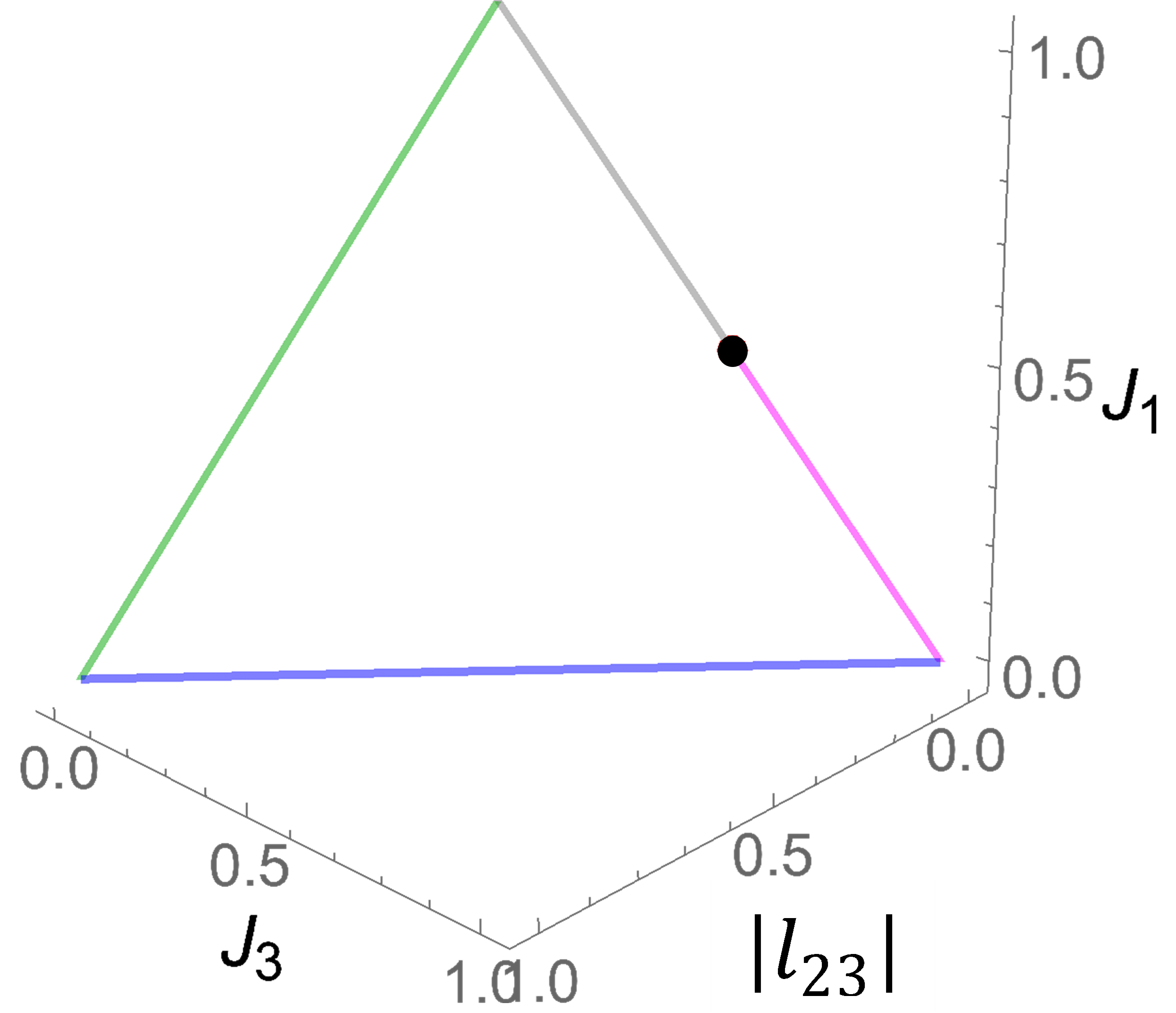}\quad\includegraphics[width=6.5cm,height=6cm]{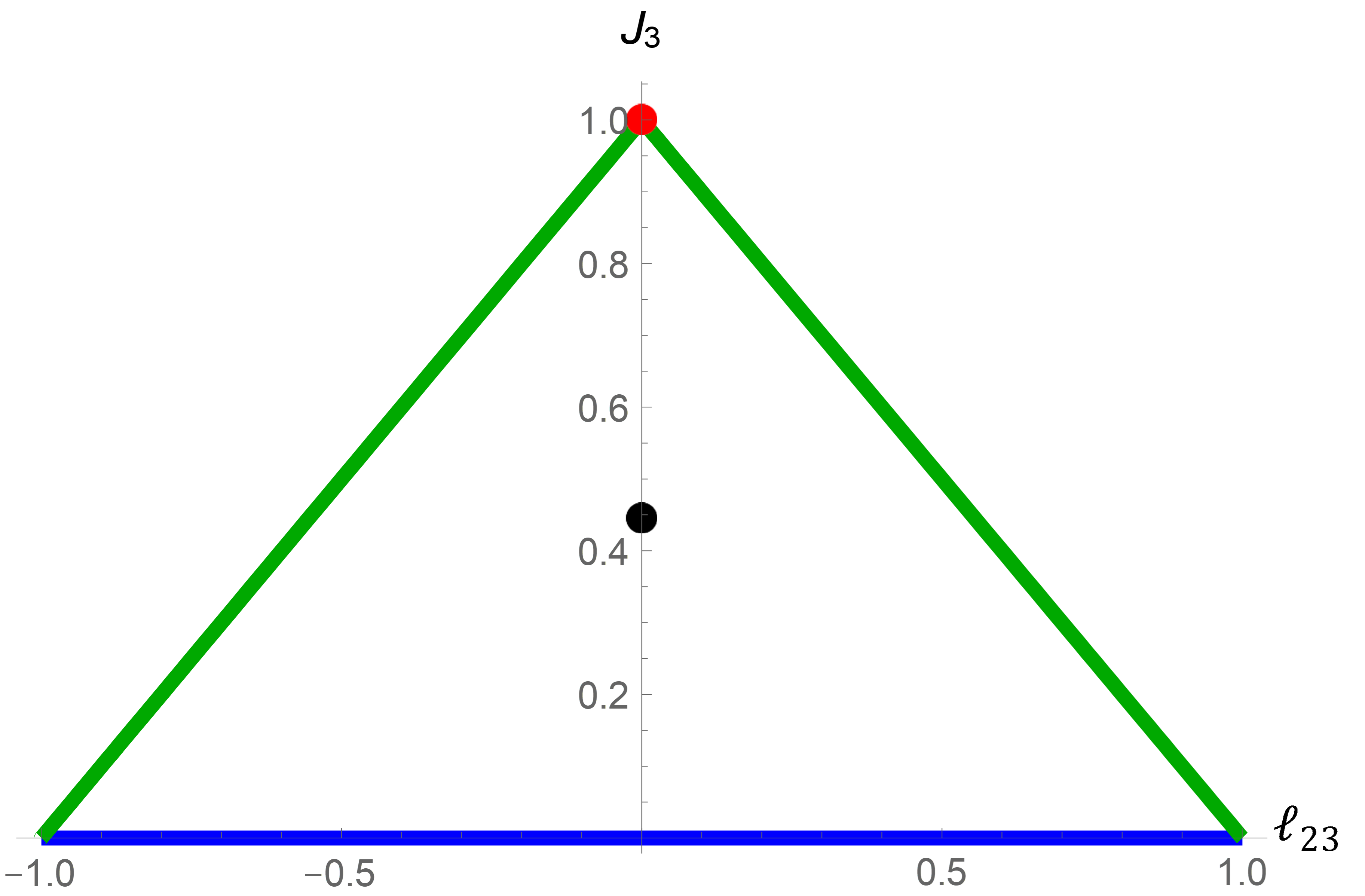}
\par\end{centering}
\caption{a) Action map for the prolate system with $b=2.4$ where the black dot is the image of the focus-focus
point. b) Semi-toric polygon invariant for the reduced prolate system. \label{fig:Action and polygon invariant}}
\end{figure}

The reduced prolate system is a two degree of freedom integrable system
where one of the integrals is a global $S^{1}$ action and all singularities
are either elliptic or focus-focus type. Thus, we have the following.
\begin{cor}
The reduced prolate system $(\ell_{23},G_{pro})$ on $S^{2}\times S^{2}$
is a generalised semi-toric system.
\end{cor}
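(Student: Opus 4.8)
The plan is to check, one at a time, the conditions defining a generalised semi-toric system for the pair $(\ell_{23},G_{pro})$ on $S^2\times S^2$, taking $\ell_{23}$ as the distinguished integral. Such a system requires: (i) a completely integrable system $\mathbf F=(F^{(1)},F^{(2)})$ with $\{F^{(1)},F^{(2)}\}=0$ on a connected $4$-dimensional symplectic manifold, with $\mathbf F$ proper and with connected fibres; (ii) that $F^{(1)}$ be the momentum map of an effective Hamiltonian $S^1=\mathbb R/2\pi\mathbb Z$ action; (iii) that every singular point of $\mathbf F$ be non-degenerate and contain no hyperbolic block --- the adjective ``generalised'' weakening the usual simplicity requirement so as to allow a single focus--focus fibre to contain more than one focus--focus point. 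Condition (i) is essentially in hand: $(\ell_{23},G_{pro})$ is the reduced prolate integrable system on the compact manifold $S^2\times S^2$, so $\mathbf F$ is automatically proper, and connectedness of the fibres is read off from the explicit fibre description in the preceding Proposition (elliptic circles over $\mathcal P_1$ and $\mathcal P_2$, isolated points over the rank-$0$ values, the doubly pinched torus over $(0,1)$, and $\mathbb T^2$'s over the regular values).

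For (ii) I would note that in the variables \eqref{eq:ls to Xy} one has $\ell_{23}=X_3-Y_3$, and under the block-diagonal bracket \eqref{eq:Bxy} the Hamiltonian flow of $\ell_{23}$ is the rotation about the third axis on the first $S^2$ together with the opposite rotation on the second $S^2$; both are $2\pi$-periodic, so $\ell_{23}$ generates an $S^1$ action, and this action is effective because its time-$t$ map is the identity only for $t\in 2\pi\mathbb Z$. Equivalently, $\ell_{23}=x_2y_3-x_3y_2$ is an angular-momentum component on $T^*\mathbb R^4$ generating a simultaneous rotation in the $(x_2,x_3)$- and $(y_2,y_3)$-planes, which descends to the quotient. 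Hence $\ell_{23}$ is a global, effective $S^1$-momentum map.

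Condition (iii), together with the precise sense in which the system is only \emph{generalised} semi-toric, follows from the critical-point analysis already performed: the families over $\mathcal P_1$ and $\mathcal P_2$ and their rank-$0$ endpoints have purely imaginary linearised spectra and are of elliptic--regular and elliptic--elliptic type; the rank-$0$ points $\ell_{14}=\pm1$ over the isolated value $(0,1)$ are non-degenerate focus--focus points (a complex quadruple of eigenvalues $\pm\alpha\pm i\beta$); and no singular point has a real pair of eigenvalues, so no hyperbolic block appears anywhere. Finally, since the two focus--focus points $\ell_{14}=\pm1$ lie in one and the same fibre --- the doubly pinched torus --- that fibre is non-simple, which is precisely the feature that makes the system generalised rather than strictly semi-toric. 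Assembling (i)--(iii) proves the corollary.

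The step I expect to require the most care (and which is in fact supplied by the preceding Proposition) is the focus--focus verification at $(0,1)$: one must confirm that this value is a genuinely non-degenerate focus--focus value and not a higher-order degeneracy, that its fibre is precisely a \emph{doubly} pinched torus with no extraneous components, and that passing from the integral $\ell_{23}^2$ to $\ell_{23}$ really does turn the degenerate critical values noted for $(\ell_{23}^2,G_{pro})$ into regular values away from that one point. Concretely this reduces to a careful linearisation at $\ell_{14}=\pm1$ (computing $\nabla[B(\lambda\nabla\eta_1+\nabla\eta_2)]$ there, as was done for the ellipsoidal system) together with an explicit look at the stable and unstable manifolds showing the two pinch points joined by two cylinders. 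A minor further point to be checked is that there are no rank-$0$ points beyond the listed elliptic--elliptic corners and this focus--focus pair.
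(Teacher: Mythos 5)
Your proof is correct and takes essentially the same route as the paper, which simply observes that $(\ell_{23},G_{pro})$ is a two-degree-of-freedom integrable system on $S^2\times S^2$ in which $\ell_{23}$ is a global $S^1$ action and the preceding critical-point analysis shows every singularity is elliptic or focus--focus (with the two focus--focus points sharing the doubly pinched torus fibre over $(0,1)$, which is exactly why the system is only \emph{generalised} semi-toric). Your version is more detailed --- in particular the explicit check that $\ell_{23}=X_3-Y_3$ generates an effective $2\pi$-periodic flow, and the flagged linearisation at $\ell_{14}=\pm1$ --- but these are elaborations of the same argument, not a different one.
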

Semi-toric systems have been globally classified using $5$ symplectic
invariants \cite{Pelayo2009}. One of these is the polygon invariant,
which is a family of rational convex polygons. This is a generalisation
of the Delzant polytope (see, e.g., \cite{BSMF_1988__116_3_315_0})
and allows us to compare the standard affine structure of $\mathbb{R}^{2}$
with that of the momentum map \cite{Alonso2019,Sepe2017}.
In Figure \ref{fig:Action and polygon invariant} b) we show one representative of the polygon invariant. This is simply the projection of the action map onto the $(\ell_{23},J_{1})$
axes with both signs of $\ell_{23}$ considered. The red vertex at
$(0,1)$ is a fake corner and is the result of ``opening up'' from $|\ell_{23}|$ to $\ell_{23}$. 
For more information on the classification of semi-toric systems,
see \cite{Sepe2017,Alonso2019}. 

Another symplectic invariant of a semi-toric system is the height invariant, which is the position of the focus-focus point in the image of the action map. Note that this is the limit of the image of the hyperbolic-hyperbolic point in the action map for the degeneration $a=1$. Specialising \eqref{eq:HHptaction} to this  case gives $\frac{2}{\pi} \cos^{-1} \sqrt{1/b}$ for the height invariant.

Another property of semi-toric systems
is the non-trivial monodromy of the actions; the focus-focus
equilibrium implies that one can only locally construct a smooth set
of action variables. Monodromy has been well studied, both classically
and quantum mechanically, see e.g. \cite{Dawson2022,Dullin2016,DAVISON20072437,Chiscop_2019}.
The prolate system has non-trivial monodromy. 
\begin{lem}
The reduced prolate system has non-trivial monodromy with monodromy matrix
\begin{equation}
\mathfrak{M}=\left(\begin{array}{ccc}
1 & 2 & 0\\
0 & 1 & 0\\
0 & -2 & 1
\end{array}\right).\label{eq:Monodromy matrix}
\end{equation}
\end{lem}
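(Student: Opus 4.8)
The plan is to compute the monodromy of the reduced prolate system around its unique focus-focus point at $(\ell_{23}, G_{pro}) = (0,1)$, which by the classical theory of Duistermaat, Zung, and others has monodromy matrix conjugate to $\left(\begin{smallmatrix}1 & 1\\ 0 & 1\end{smallmatrix}\right)$ in the plane of the two ``essential'' actions, and then track how this acts on the three actions $(J_1, J_2, J_3)$. First I would recall that $J_2 = |\ell_{23}|$ is the global $S^1$ action whose flow is $2\pi$-periodic; it is smooth and single-valued away from the fake corner, so it contributes the middle column/row structure that fixes $J_2$. The essential action is a combination of $J_1$ and $J_3$; since the constraint $J_1 + J_2 + J_3 = \sqrt{2h}$ (Lemma~\ref{Action lemma}, which also holds in the prolate case as noted in the text) fixes the sum, the monodromy can only act on $J_1$ and $J_3$ via shifts proportional to the $S^1$-action $J_2$, with the shifts in $J_1$ and $J_3$ being opposite so that the sum is preserved.

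The key steps, in order: (1) identify the loop $\gamma$ in the set of regular values that encircles the focus-focus value $(0,1)$ once, staying inside the momentum-map image bounded by $\mathcal{P}_1$ and $\mathcal{P}_2$; (2) express the actions as periods of $p\,ds$ over the appropriate cycles on the genus-one curve obtained from \eqref{eq:sep momenta prolate} (the $s_1, s_3$ part), and determine which homology cycle of this curve undergoes a Picard-Lefschetz transformation as $\gamma$ is traversed; (3) apply the Picard-Lefschetz formula: the vanishing cycle at the focus-focus point is the one whose period is $\ell_{23} \sim J_2$, and the cycle computing $J_1$ (equivalently $-J_3$ modulo the constant) picks up $\pm$ the vanishing cycle after one loop; (4) read off the monodromy in the basis $(J_1, J_2, J_3)$, checking the factor of $2$ arises because $J_2 = |\ell_{23}|$ is half the natural period $\oint$ around the branch points $\pm$ corresponding to $\ell_{23}$ (the ``doubly pinched torus'' remark in the Proposition — the fibre is a \emph{doubly} pinched torus, so the shift is $2$ not $1$); (5) verify the resulting matrix \eqref{eq:Monodromy matrix} preserves $J_1 + J_2 + J_3$ (sum of each column is $1$) and has the correct lower-right block acting trivially on the $S^1$-action, consistent with the semi-toric picture.

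I would make the factor-of-two precise by the following argument: the focus-focus fibre is a doubly-pinched torus (Proposition above), meaning there are two critical points $\ell_{14} = \pm 1$ in the fibre over $(0,1)$. Equivalently, the ``opening up'' from $\ell_{23}^2$ to $\ell_{23}$ doubles the relevant cycle; the natural hyperelliptic action associated with the pair $(s_1, s_3)$ winds twice around the vanishing cycle as $(\ell_{23}, G_{pro})$ loops once around $(0,1)$, because in the $\ell_{23}^2$ picture the point $(0,1)$ sits on a branch locus. Concretely, writing $J_1$ as an integral of $p_1$ over a cycle $\beta_1$ and using the explicit $p_1^2$ from \eqref{eq:sep momenta prolate}, one tracks the roots $r_1, r_2$ as the loop is traversed and sees $\beta_1 \mapsto \beta_1 + 2\delta$ where $\delta$ is the vanishing cycle with $\oint_\delta = 2\pi \ell_{23}/(2\pi) = J_2$; since $J_3 = \sqrt{2h} - J_1 - J_2$ is forced, $J_3 \mapsto J_3 - 2 J_2$. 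This gives exactly \eqref{eq:Monodromy matrix}.

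The main obstacle I expect is step (2)–(3): carefully setting up the correct homology basis on the hyperelliptic/elliptic curve and rigorously justifying that the monodromy picks up \emph{twice} the vanishing cycle rather than once. The factor of $2$ is the whole content of the lemma (a naive focus-focus point gives a $1$), and getting it right requires being careful about the distinction between the $\ell_{23}^2$-coordinates (in which $(0,1)$ is a degenerate non-focus-focus value lying on a fold) and the $\ell_{23}$-coordinates (in which it becomes genuine focus-focus but with the fibre a doubly-pinched torus). An alternative, cleaner route to avoid the Picard-Lefschetz bookkeeping is to invoke the general fact that for a focus-focus fibre with $k$ pinch points the monodromy is $\left(\begin{smallmatrix}1 & k\\ 0 & 1\end{smallmatrix}\right)$, cite this (e.g.\ \cite{book}), note $k=2$ here from the Proposition, and then conjugate into the $(J_1,J_2,J_3)$ frame using the unimodular change of basis that sends the essential action pair to $(J_1, J_2)$ and keeps $J_1+J_2+J_3$ fixed; this yields \eqref{eq:Monodromy matrix} directly, with the verification that column sums equal $1$ serving as the consistency check.
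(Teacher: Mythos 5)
Your proposal is correct in substance and arrives at the right matrix, but it takes a genuinely different route from the paper. The paper's proof is a direct, self-contained computation with the symmetry-reduced actions: it writes $J_1,J_3$ as contour integrals $\oint_{C_i} p_i\,ds_i$, computes the one-sided limits $\lim_{l\to 0}\partial J_i/\partial l = -\kappa_i\,\mathrm{sgn}(l)$ (the residue of the integrand at the pole $s_i=1$, which contributes only when the cycle $C_i$ crosses $s_i=1$, i.e.\ on one side of $g=1$), determines the unimodular gluing matrices $M_1,M_2$ that make $J_\pm$ match smoothly across $l=0$ in the two regimes $g\gtrless 1$, and obtains $\mathfrak{M}=(M_2S)^{-1}(M_1S)$. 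Your "cleaner route" instead invokes the general theorem that a focus-focus fibre with $k$ pinch points has monodromy $\bigl(\begin{smallmatrix}1&k\\0&1\end{smallmatrix}\bigr)$, reads off $k=2$ from the doubly pinched torus over $(0,1)$ established in the preceding Proposition, and then extends to the $3\times3$ frame using preservation of $J_1+J_2+J_3$ and invariance of the $S^1$ action; this is legitimate and shorter, at the cost of importing a nontrivial external result and of being careful about basis and orientation conventions. Your first route (Picard--Lefschetz on the curve, tracking how the $J_1$-cycle picks up the vanishing cycle at $s=1$ whose period is $\ell_{23}$) is essentially the paper's residue computation reorganised in homological language, so it would work too. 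One small caution: your heuristic that the $2$ arises because "$|\ell_{23}|$ is half the natural period" is not quite the right mechanism — in the paper's calculation the $2$ comes from the jump of size $2$ in $\partial J_i/\partial l$ across $l=0$ (the one-sided limits are $\mp 1$), equivalently from the two pinch points $\ell_{14}=\pm1$ in the fibre; the cleanest justification is the one you give via $k=2$, not a rescaling of $J_2$.
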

\begin{proof}
Let $C_{1}$ and $C_{3}$ be cycles that enclose the intervals $[0,\min(1,r_{1})]$
and $[\max(1,r_{2}),b]$ respectively. We rewrite (\ref{eq:action prol})
as 
\begin{equation}
\begin{aligned}J_{1}\coloneqq\frac{1}{2\pi}\oint_{C_{1}}p_{1}ds_{1}, &  & J_{3}\coloneqq\frac{1}{2\pi}\oint_{C_{3}}p_{3}ds_{3}\end{aligned}
.\label{eq:action prolate}
\end{equation}
We observe that if $(l,g)=(0,g>1)$ then $r_{1}\to1^{-}$.
while if $g_{pro}<1$, then $r_{2}\to1^{+}$. We now show
that the actions \eqref{eq:action prolate}, while continuous everywhere, are not globally smooth.
Consider the slope of the action $J_i$ considered as a function of $l$,  
\[
2\pi W_{i}\coloneqq\frac{\partial J_{i}}{\partial l}=\oint_{C_{i}}\frac{(b-1)l}{2(b-s_{i})(s_{i}-1)^{2}p(s_{i})}ds_{i}.
\]
When $l=0$, if $C_{i}$ encloses $s_{i}=1$ then $W_{i}=-\text{sgn}(l)$,
otherwise it vanishes. From our analysis of $C_{1}$ and $C_{2}$
around $s_{i}=1$ we have
\begin{equation}
\begin{aligned}\lim_{l\to0}\frac{\partial J_{1}}{\partial l}= & -\kappa_{1}\text{sgn}(l) &  & \lim_{l\to0}\frac{\partial J_{3}}{\partial l}=-\kappa_{3}\text{sgn}(l)\end{aligned}
\label{eq:K1 <-1}
\end{equation}
where $(\kappa_{1},\kappa_{3})=(1,0)$ when $g_{pro}<1$, otherwise
$(\kappa_{1},\kappa_{3})=(0,1)$. Thus, the actions $J_{1}$ and $J_{3}$
are continuous but not differentiable at $l=0$. 

For $l>0$, let $J_{+}=\left(J_{1},J_{2},J_{3}\right)^{t}$
and similarly for $J_{-}$. Note that $J_{1}$ and $J_{3}$ are even
functions of $l$ while $J_{2}$ is odd. This means $J_{-}(-l)=SJ_{+}(l)$
where $S=\text{diag}(1,-1,1)$. We are now interested in finding unimodular
matricies $M_{1},M_{2}\in SL(3,\mathbb{Z})$ such that $J_{+}$ and
$M_{i}J_{-}$ are locally smooth across $l=0$. To ensure continuity at $l=0$
we require
\[
\begin{aligned}J_{+}=M_{1}J_{-}=M_{1}SJ_{-}=M_{1}J_{+}, &  &  & g>1\\
J_{+}=M_{2}J_{-}=M_{2}SJ_{-}=M_{2}J_{+}, &  &  & g<1.
\end{aligned}
\]
The above relations imply that $\left(J_1,0,J_{3}\right)^{t}$ is
an eigenvector of both $M_{1}$ and $M_{2}$ with eigenvalue $+1$.
For arbitrary $J_{1},J_{3}$ the corresponding eigenvector equation implies
that $M_{i}$ has the form 
\[
M_{i}=\begin{pmatrix}1 & \alpha_i & 0\\
 0& 1 & 0\\
 0& \beta_{i} & 1
\end{pmatrix}.
\]
For the actions to be smoothly joined when $g>1$ we require 
\[
M_{1}\frac{\partial J_{-}}{\partial l}=\frac{\partial J_{+}}{\partial l}
\]
 and similarly for $g<1$ and $M_{2}$. The limits in (\ref{eq:K1 <-1}) force
$(\alpha_{1},\beta_{1})=(0,-2)$ and $(\alpha_{2},\beta_{2})=(-2,0)$. The corresponding
monodromy matrix is given by $M=(M_{2}S)^{-1}(M_{1}S)$ which we compute
to be (\ref{eq:Monodromy matrix}).
\end{proof}
The monodromy of the reduce system with actions $(J_1,\ell_{23})$ is obtained by the restriction to the top left block of $\mathfrak{M}$.
It should be stressed that the integrable system with Hamiltonian $H$ does not have monodromy, it is superintegrable, and does not even have dynamically defined tori. 
However,  the fibration defined by the three commuting functions $(H, \ell_{23}, G_{pro})$ 
on $T^*S^3$ has monodromy as computed. Similarly, the commuting function $(\ell_{23}, G_{pro})$ on $S^2 \times S^2$ have monodromy given by the top left block of $\mathfrak{M}$.

\subsection{Oblate Coordinates}

Eventhough the definition
of these coordinates is similar to the prolate case, the corresponding integrable
system is significantly different. In particular, even though it does have a global $S^1$ action it is not semi-toric because of the appearance of hyperbolic and degenerate singularities.

\subsubsection{Separation of Variables}

Oblate coordinates, denoted by $(1\ 2\ (3\ 4))$ and $((1\ 2)\ 3\ 4)$,
lie on opposite sides of the dotted line in Figure \ref{fig:Stasheff S3}. They are equivalent by flipping the ordering of the $e_i$ by applying $e_i\mapsto-e_i$ then reordering. 
Consequently, the corresponding integrable systems are equivalent. We focus on the $(1\ 2\ (3\ 4))$ coordinates
and normalise according to $(e_{1},e_{2},e_{3}=e_{4})=(0,1,a)$. This system is equivalent to the $((1\ 2)\ 3\ 4)$ with $(e_{1}=e_{2},e_{3},e_{4})=(0,1,\frac{a}{a-1})$. An
explicit definition of these coordinates is given by 
\begin{equation}
\begin{aligned}x_{1}^{2} & =\frac{s_{1}s_{2}}{a}, &  & x_{2}^{2}=\frac{-\left(s_{1}-1\right)\left(s_{2}-1\right)}{a-1},\\
x_{3}^{2} & =\frac{\left(s_{1}-a\right)\left(s_{2}-a\right)s_{3}}{a\left(a-1\right)}, &  & x_{4}^{2}=\frac{\left(s_{1}-a\right)\left(s_{2}-a\right)\left(1-s_{3}\right)}{a\left(a-1\right)},
\end{aligned}
\label{eq:oblate def}
\end{equation}
where $0\le s_{1},s_{3}\le1\le s_{2}\le a$. The integrals are $(2H,\ell_{34},G_{obl})$
where $G_{obl}=a\ell_{12}^{2}+\ell_{13}^{2}+\ell_{14}^{2}$. Let $(l,g)$ be functional values of $\ell_{34}$ and
$G_{obl}$ respectively, we obtain the separated
momenta
\begin{equation}
\begin{aligned}p_{i}^{2} & =\frac{-2hs_{i}^{2}+(2ah+g-(a-1)l^{2})s_{i}-ag}{4s_{i}(s_{i}-1)(s_{i}-a)^{2}} &  &  & p_{3}^{2} & =\frac{l^{2}}{4s_{3}(1-s_{3})}\end{aligned}
\label{eq:psq oblate}
\end{equation}
where $i\in\{1,2\}$. We call the triple $(2H,\ell_{34},G_{obl})$ on $T^{*}S^{3}$ the
1-parameter family of oblate integrable systems and $(\ell_{34},G_{obl})$ the corresponding reduced oblate integrable system. 

We have a similar result to  Lemma \ref{Prolate degen proof } for the oblate system.
\begin{lem}
    The integrals $(\ell_{34}^2,G_{obl})$
as well as the separated momenta \eqref{eq:psq oblate} can
be obtained by smoothly degenerating their ellipsoidal counterparts
\eqref{eq:Separation constants ellipsoidal} and \eqref{eq:psq ellipsoidal-1}.
\end{lem}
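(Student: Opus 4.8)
The plan is to mirror exactly the argument given in Lemma~\ref{Prolate degen proof } for the prolate case, since the oblate degeneration $e_3 = e_4$ is structurally the same type of limit as the prolate one $e_2 = e_3$ -- only the location of the collapsing pair of semi-major axes differs. First I would write down the analogue of the canonical change of variables \eqref{eq:legit pro trans}: here the degenerating pair is $e_3, e_4$, so I would set $e_4 = e_3 + \epsilon$, introduce a rescaled ignorable coordinate via $s_3 = e_3 + \epsilon \tilde s_3$ (note that in the oblate normalisation $s_3$ plays the role that $s_2$ played in the prolate case, i.e.\ it is the ignorable coordinate confined to $[0,1]$ after rescaling), and $p_3 = \tilde p_3/\epsilon$. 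I would check briefly that this transformation is canonical, exactly as in the prolate proof.

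Next I would substitute this family of substitutions into the ellipsoidal separation constants \eqref{eq:Separation constants ellipsoidal}, set $\tilde\eta_i = \eta_i|_{e_4 = e_3 + \epsilon}$, and take the limit $\epsilon \to 0$. I expect to find that $G_{obl}$ and $\ell_{34}^2$ emerge as explicit affine combinations of $\tilde\eta_1$, $\tilde\eta_2$ and $H$ -- concretely, $G_{obl}$ should be a linear combination of $\tilde\eta_1$ and $\tilde\eta_2$, and $\ell_{34}^2 = \tfrac{1}{a-1}(\text{something in } \tilde\eta_1, \tilde\eta_2, H)$, paralleling the formula $\ell_{23}^2 = \tfrac{1}{b-1}(\tilde\eta_1 - \tilde\eta_2 - 2H)$ in the prolate case. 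The precise coefficients follow from the normalisation $(e_1, e_2, e_3 = e_4) = (0, 1, a)$ and from which $e_k$'s survive in each sum $\sum_{k \ne i,j} e_k$ and product $\prod_{k \ne i,j} e_k$; this is a finite bookkeeping computation.

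For the separated momenta, I would insert the same substitution \eqref{eq:legit pro trans}-analogue into \eqref{eq:psq ellipsoidal-1}, expand about $\epsilon = 0$, and verify that the $p_i^2$ for $i \in \{1,2\}$ converge to the first expression in \eqref{eq:psq oblate} (with the double pole now at $s_i = a$ rather than at $s_i = 1$), while the rescaled $\tilde p_3^2$ converges to $\tfrac{l^2}{4 s_3(1-s_3)}$ after dropping tildes. Here one uses that the numerator $R(z) = 2hz^2 - \eta_1 z + \eta_2$, re-expressed via the relation $\ell_{34}^2 = \tfrac{1}{a-1}(\cdots)$, reproduces the numerator $-2hs_i^2 + (2ah + g - (a-1)l^2)s_i - ag$ of \eqref{eq:psq oblate}.

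The main obstacle -- though it is more of a care point than a genuine difficulty -- is confirming that the limit is \emph{smooth} rather than merely pointwise: one must check that the $O(\epsilon)$ remainder terms in the expansion of the momenta (the analogue of \eqref{eq:V pro}) are genuinely regular on the relevant coordinate ranges $0 \le s_1, s_3 \le 1 \le s_2 \le a$, so that no hidden singularity develops as $e_4 \to e_3$. The only place this could fail is where the collapsing factor $(s_i - e_3)(s_i - e_4)$ appears in a denominator; since for $i \in \{1,2\}$ we have $s_i \ne e_3$ in the interior, and the rescaling $s_3 = e_3 + \epsilon \tilde s_3$ was chosen precisely to absorb the degeneration in the $p_3$ equation, the limit is smooth. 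Once this is verified, the lemma follows, and in the degeneration $F_3, F_4$ become singular with $\lim_{\epsilon \to 0}(e_4 - e_3) F_3 = -\lim_{\epsilon \to 0}(e_4 - e_3) F_4 = \ell_{34}^2$ while $F_1, F_2$ degenerate smoothly, exactly in parallel with the remark following the prolate lemma.
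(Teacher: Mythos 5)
Your proposal is correct and is essentially identical to the paper's proof, which consists precisely of the substitution $(e_{4},s_{3},p_{3})=(e_{3}+\epsilon,\,e_{3}+\epsilon\tilde{s}_{3},\,\tilde{p}_{3}/\epsilon)$ followed by the same limiting procedure as in the prolate lemma. The extra care you take with the $O(\epsilon)$ remainders and the Uhlenbeck integrals only elaborates on what the paper leaves implicit (though note that with $F_i=\sum_{j\neq i}\ell_{ij}^2/(e_i-e_j)$ the singular parts give $\lim_{\epsilon\to0}(e_4-e_3)F_4=-\lim_{\epsilon\to0}(e_4-e_3)F_3=\ell_{34}^2$, so double-check the sign in your closing remark).
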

\begin{proof}
    Using the transformation 
    \[(e_{4},s_{3},p_3)=(e_{3}+\epsilon,e_{3}+\epsilon\tilde{s}_{3}, \frac{\tilde{p}_3}{\epsilon})\]
where $\tilde{s}_{3}\in[0,1]$ and following the same procedure as Lemma \ref{Prolate degen proof } gives the result.
\end{proof}
In the oblate limit, the Uhlenbeck integrals $\tilde{F}_i$ are
\[
\begin{aligned}\tilde{F}_{1}=-\frac{G_{obl}}{a}, &  & \tilde{F}_{2}=\frac{G_{obl}+\ell_{34}^2-2h}{a-1}, &  & \tilde{F}_{3}=\tilde{F}_4=\ell_{34}^2.\end{aligned}
\]

\subsubsection{Critical Points and Momentum Map}

\begin{figure}
\begin{centering}
\includegraphics[width=7cm,height=6cm]{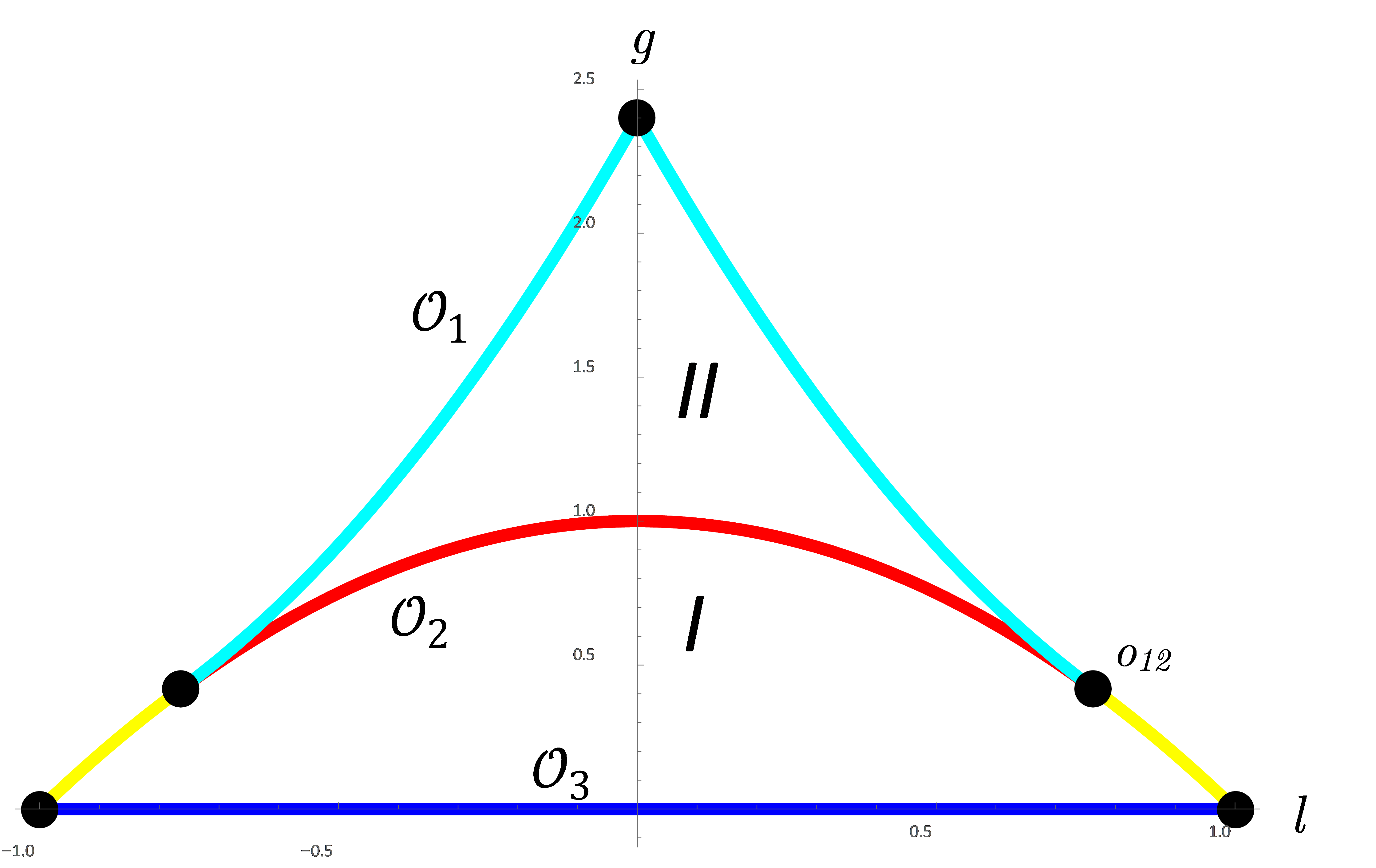}\quad\includegraphics[width=7cm,height=6cm]{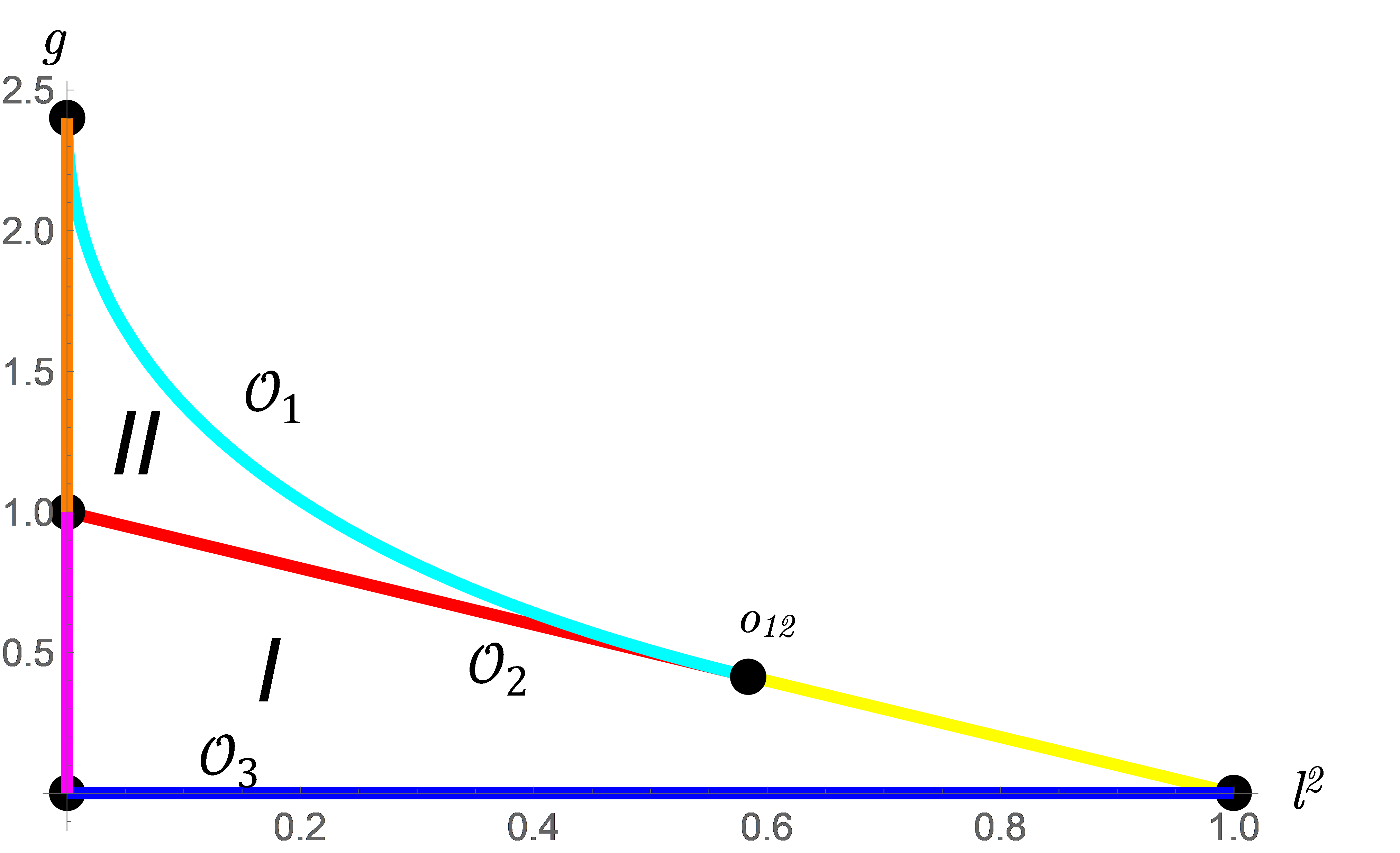}
\par\end{centering}
\caption{a) Momentum map for the oblate system $(\ell_{34},G_{obl})$ with $a=2.4$.
There are two chambers, labelled $I$ and $II$. b) ``Unopened''
momentum map taking $\ell_{34}^{2}$ as integral. \label{fig:Oblate root and MM}}
\end{figure}

We can also use the method of compatible Poisson structures to aid
in studying this system. Using the matrix $C=\text{\text{diag}}(0,1,a,a+\ensuremath{\epsilon})$
in Proposition \ref{fact1} gives the equation 
\[
\psi_{obl}(\lambda)=2\frac{-2h\lambda^{2}+I_{1}\lambda+I_{2}}{\lambda(\lambda-1)(\lambda-a)(\lambda-a-\epsilon)}
\]
where $(I_{1},I_{2})=(G_{obl}+2ah-(a-1)\ell_{34}^{2}+\epsilon(\ell_{12}^{2}+\ell_{13}^{2}+\ell_{23}^{2}),-aG_{obl}-a\epsilon\ell_{12}^{2}-\epsilon\ell_{13}^{2})$.
According to Corollary~\ref{biham-critical-values}, the critical values
occurs at the curve $I_{2}=\frac{I_{1}^{2}}{8h}$ as well as the lines
$I_{2}=0$, $I_{2}=2h-I_{1}$, $I_{2}=2ha^{2}-aI_{1}$ and $I_{2}=2h(a+\epsilon)^{2}-(a+\epsilon)I_{1}$
when $\lambda=0,1,a,a+\epsilon$ respectively. 
\begin{prop}
The critical values of the momentum map for the reduced oblate integrable system are the curves $\mathcal{O}_{1}:G_{obl}=(\sqrt{a}-\sqrt{a-1}\left|\ell_{34}\right|)^{2},\mathcal{O}_{2}:G_{obl}=1-\ell_{34}^{2}$
and $\mathcal{O}_{3}:G_{obl}=0$. The momentum map is shown in Figure
\ref{fig:Oblate root and MM} a) with $2h=1$.  
\end{prop}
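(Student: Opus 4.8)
The plan is to parallel the prolate case, working from the separated momenta \eqref{eq:psq oblate} --- equivalently from the rational function $\psi_{obl}(\lambda)$ built with $C=\mathrm{diag}(0,1,a,a+\epsilon)$ --- together with Corollary~\ref{biham-critical-values}. Write the common numerator as $N(s):=-2hs^2+(2ah+g-(a-1)l^2)s-ag$, so that $4p_i^2=\psi_{obl}(s_i)=N(s_i)\big/\bigl(s_i(s_i-1)(s_i-a)^2\bigr)$ on the coordinate intervals $s_1\in[0,1]$ and $s_2\in[1,a]$. By Corollary~\ref{biham-critical-values}, a value $(l,g)$ is critical precisely when $\psi_{obl}\ge 0$ over these intervals and either a root of $N$ meets one of the poles $s\in\{0,1,a\}$, or $N$ has a double root. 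In the regularised bracket the pole at $a+\epsilon$ merges with the one at $a$ as $\epsilon\to 0$; the collapsing interval $[a,a+\epsilon]$ contributes no new boundary, and its role is absorbed by the double-root locus, exactly as for the prolate system.

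I would treat the pole conditions first. One computes $N(0)=-ag$, $N(1)=(a-1)(2h-g-l^2)$ and $N(a)=-a(a-1)l^2$. Thus $\lambda=0$ forces $g=0$ (the line $\mathcal{O}_3$); $\lambda=1$ forces $g=2h-l^2=1-\ell_{34}^2$ with $2h=1$ (the curve $\mathcal{O}_2$); and $\lambda=a$ forces $\ell_{34}=0$, which --- as in the prolate case --- gives only degenerate critical values that become regular after ``opening up'' from $\ell_{34}^2$ to $\ell_{34}$, apart from the corner $(0,a)$.

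The double-root condition produces $\mathcal{O}_1$ and is the crux. The discriminant of $N$ vanishes when $(2ah+g-(a-1)l^2)^2=8ahg$; with $2h=1$ this reads $(a+G_{obl}-(a-1)\ell_{34}^2)^2=4aG_{obl}$, and solving the resulting quadratic in $\sqrt{G_{obl}}$ gives $\sqrt{G_{obl}}=\sqrt a\pm\sqrt{a-1}\,|\ell_{34}|$, that is $G_{obl}=\bigl(\sqrt a\mp\sqrt{a-1}\,|\ell_{34}|\bigr)^2$. A sign check of $\psi_{obl}$ at the poles $0,1,a$ then shows that only the branch $G_{obl}=\bigl(\sqrt a-\sqrt{a-1}\,|\ell_{34}|\bigr)^2$ keeps the double root inside the admissible range $[1,a]$ (the $+$ branch has its double root outside, where $\psi_{obl}$ is negative somewhere); this is $\mathcal{O}_1$.

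It remains to fix the precise extent of the three arcs and the two chambers, and to confirm these really are critical values. For the former I would build the oblate ``root diagram'' in the spirit of Figure~\ref{fig:root diagram and mm ellipsoidal}, tracking which of the two roots $r_1\le r_2$ of $N$ lies in $[0,1]$ and which in $[1,a]$; this also exhibits the tangency of $\mathcal{O}_1$ and $\mathcal{O}_2$, since the equation $\mathcal{O}_1=\mathcal{O}_2$, namely $a\ell_{34}^2-2\sqrt{a(a-1)}\,|\ell_{34}|+(a-1)=0$, has vanishing discriminant. Then, as in the proofs of Proposition~\ref{The-bifurcation-diagram Lemma} and Theorem~\ref{action thm}, compactness of $S^2\times S^2$ together with continuity of $(\ell_{34},G_{obl})$ forces the image to be the closed region bounded by $\mathcal{O}_1,\mathcal{O}_2,\mathcal{O}_3$. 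For the latter I would identify the critical points via Proposition~\ref{Bolsinov Theorem}: $\mathcal{O}_3$ is $\lambda=e_1=0$ with critical set $\{\ell_{12}=\ell_{13}=\ell_{14}=0\}$, $\mathcal{O}_2$ is $\lambda=e_2=1$ with critical set $\{\ell_{12}=\ell_{23}=\ell_{24}=0\}$, and $\mathcal{O}_1$ is the real part of $T_2(\lambda)(\text{Sing})$ at the double root $\lambda\in[1,a]$, cut by the Pl\"ucker relation and $\bm{L}\cdot\bm{L}=2h$, just as the ellipsoidal curve $\mathcal{C}$ was handled; alternatively, since the integrals are simple, one may solve $B_{\bm{L}}\bigl(\mu\,\nabla\ell_{34}+\nu\,\nabla G_{obl}\bigr)=0$ on the symplectic leaf directly and check that the solution set maps onto $\mathcal{O}_1\cup\mathcal{O}_2\cup\mathcal{O}_3$. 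The main obstacle is this last bookkeeping combined with the sign analysis --- correctly selecting the branch $G_{obl}=(\sqrt a-\sqrt{a-1}\,|\ell_{34}|)^2$ and the exact admissible segments of all three curves, and handling cleanly the $\epsilon\to 0$ collision of the poles at $a$ and $a+\epsilon$; as in the ellipsoidal case, surjectivity onto the two-dimensional region is not visible from an explicit parametrisation and needs the compactness argument.
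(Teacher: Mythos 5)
Your proposal is correct and follows essentially the same route as the paper: both work within the compatible-Poisson-structure framework (Proposition~\ref{fact1}, Corollary~\ref{biham-critical-values}, Proposition~\ref{Bolsinov Theorem}), identifying $\mathcal{O}_2,\mathcal{O}_3$ and the degenerate line $\ell_{34}=0$ from the poles $\lambda=0,1,a,a+\epsilon$ and $\mathcal{O}_1$ from the double-root locus, with the same conclusion that $\ell_{34}=0$ becomes regular after opening up $\ell_{34}^2$ to $\ell_{34}$. The only cosmetic difference is that you obtain $\mathcal{O}_1$ from the discriminant of the numerator with a branch selection (checking the double root lies in $[1,a]$), whereas the paper parametrises the critical points explicitly via $T_2(\lambda)(\mathrm{Sing})$ and the Pl\"ucker/Casimir constraints and reads off the curve $(\ell_{34},G_{obl})=(\mp(a-\lambda)/\sqrt{a(a-1)},\lambda^2/a)$; your computations (including the tangency of $\mathcal{O}_1$ and $\mathcal{O}_2$) check out.
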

\begin{proof} Applying Proposition~\ref{Bolsinov Theorem} and following a similar calculation as the ellipsoidal case we get:
\begin{enumerate}
\item The  blue line $\mathcal{O}_{3}:G_{obl}=0$ has $\lambda=0$ and critical
points parametrised by $\bm{L}=(0,0,0,\ell_{23},\ell_{24},\ell_{34})$
with $\ell_{23}^{2}+\ell_{24}^{2}+\ell_{34}^{3}=1$ and $\ell_{34}=l_{obl}$.
\item The red and yellow curve 
 $\mathcal{O}_{2}:G_{obl}=1-\ell_{34}^{2}$ has $\lambda=1$
and the critical points are parametrised by $\bm{L}=(0,\ell_{13},\ell_{14},0,0,\ell_{34})$
with $\ell_{13}^{2}+\ell_{14}^{2}+\ell_{34}^{3}=1$ and $\ell_{34}=l_{obl}$.

\item The cyan curve $\mathcal{O}_{1}:G_{obl}=(\sqrt{a}-\sqrt{a-1}\left|\ell_{34}\right|)^{2}$
has $\lambda\in[1,a]$ and critical points
\[
\begin{aligned}\bm{L}_{\pm}= & \left(\frac{a_{1}\sqrt{\lambda}\sqrt{\lambda-1}}{\sqrt{2}},\mp\frac{b_{2}\sqrt{\lambda}\sqrt{a-\lambda}}{\sqrt{2}},-\frac{b_{3}\sqrt{\lambda}\sqrt{a-\lambda}}{\sqrt{2}},\right.\\
 & \left.\mp\frac{b_{3}\sqrt{\lambda-1}\sqrt{a-\lambda}}{\sqrt{2}},\frac{b_{2}\sqrt{\lambda-1}\sqrt{a-\lambda}}{\sqrt{2}},\mp\frac{a_{1}(a-\lambda)}{\sqrt{2}}\right)
\end{aligned}
\]
with Pl\"{u}cker relation $a_{1}^{2}=b_{2}^{2}+b_{3}^{2}$ and $\bm{L}\cdot\bm{L}=\frac{1}{2}a(a-1)(b_{2}^{2}+b_{3}^{2})=1$.
This forces $a_{1}=\pm\frac{\sqrt{2}}{\sqrt{a(a-1)}}$ and gives a
parametrisation $(\ell_{34},G_{obl})=\left(\mp\frac{(a-\lambda)}{\sqrt{a(a-1)}},\frac{\lambda^{2}}{a}\right)$ in terms of $\lambda$. This curve exists only for $0<|\ell_{34}|<\sqrt{\frac{a-1}{a}}$
which corresponds to a double root $1<\lambda<a$.
\end{enumerate}

The lines $I_{2}=2ha^{2}-aI_{1}$ and $I_{2}=2h(a+\epsilon)^{2}-(a+\epsilon)I_{1}$
both gives the line $\ell_{34}=0$ (orange and magenta segments in Figure \ref{fig:Oblate root and MM} b)) which are degenerate critical values
for the system $(\ell_{34}^{2},G_{obl})$ since the vector field
generated by $\ell_{34}^{2}$ vanishes at $\ell_{34}=0$. However,
for the oblate integrable system $(\ell_{34},G_{obl})$ the line
$\ell_{34}=0$ becomes a set of regular values (except at three points). Direct computation
using the vector fields of $(\ell_{34},G_{obl})$ confirms these results.
\end{proof}

Let the intersection of $\mathcal{O}_{i}$ and $\mathcal{O}_{j}$
be denoted by $o_{ij\pm}$ where the sign is determined by whether
the intersection occurs for a positive or negative value of $\ell_{34}$.
The intersections $o_{12\pm}$ at $(\ell_{34},G_{obl})=(\pm\sqrt{\frac{a-1}{a}},\frac{1}{a})$
are tangential. The other $3$ intersections at $o_{11}=(0,a)$ and
$o_{23\pm}=(\pm1,0)$ are transverse.

The tangential intersections $o_{12\pm}$ are degenerate pitchfork singularities and their fibres are single circles $S^{1}$ on $S^2\times S^2$.
The points $o_{23\pm}$ are of elliptic-elliptic type. The point $o_{11}$ is also elliptic-elliptic with 2 critical points $\bm{L}=(\pm 1,0,0,0,0,0)$. 

The curves $\mathcal{O}_{1},\mathcal{O}_{3}$ as well as the yellow
parts of $\mathcal{O}_{2}$ are all codimension one elliptic. The
fibre of $\mathcal{O}_{3}$ and the yellow segments are single
$S^1$, while the fibre of $\mathcal{O}_{1}$ is $2S^1$. The
red part of $\mathcal{O}_{2}$ is codimension one hyperbolic and its
fibre is $B\times S^1$. 

The fibre of a regular value in chamber $I$ is $T^{2}$ while the
fibre of a regular value in chamber $II$ is $2T^{2}$.

\subsubsection{Actions} \label{obl-act}

\begin{figure}
\begin{centering}
\includegraphics[width=8cm,height=7cm]{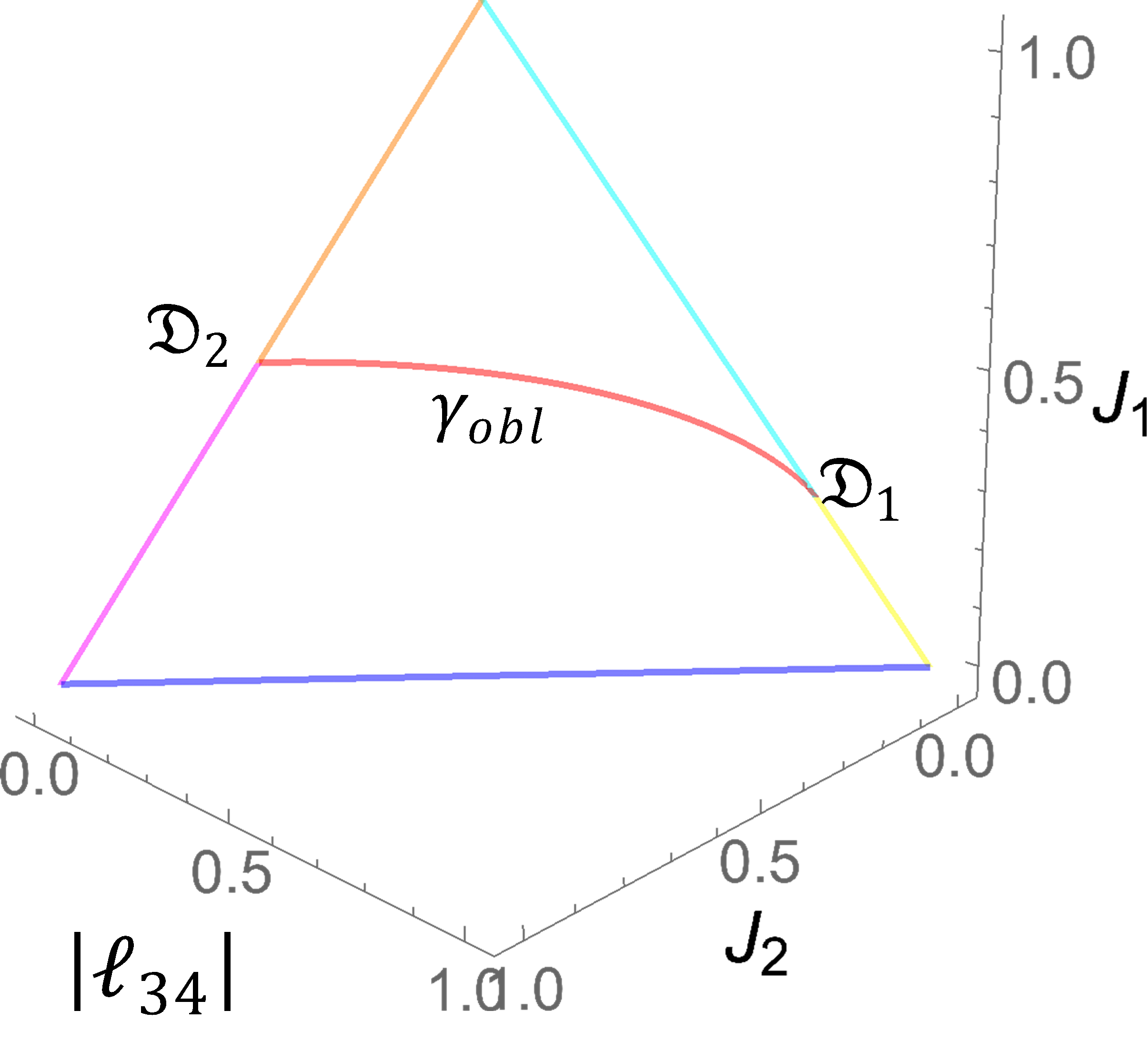}
\par\end{centering}
\caption{Action Map for the oblate system with $a=2.4$.\label{fig:Action Map Oblate}}
\end{figure}

Like in the prolate case, one action for the oblate system $J_{3,obl}\coloneqq\left|\ell_{34}\right|$
is trivial. The other two non trivial actions are
\[
\begin{aligned}J_{1}=\frac{2}{\pi}\int_{0}^{\min(r_{1},1)}p_{1}ds, &  &  & J_{2}=\frac{2}{\pi}\int_{\max(r_{1},1)}^{\min(r_{2},a)}p_{2}ds\end{aligned}
\]
where $r_{2}\ge1$ and $0\le r_{1}\le r_{2}\le a$. Theorem~\ref{action thm} also applies here. The action map is shown in Figure \ref{fig:Action Map Oblate}. For the interior (red)
curve $\gamma_{obl}$ we have $r_1=1$, $r_2=a(1-l^2)\geq1$ with $g=1-l^2$ for $|l|\leq\sqrt{\frac{a-1}{a}}$. The parametrisation of this curve in terms of the angular momentum $l$ where $|l|\leq\sqrt{\frac{a-1}{a}}$ is given by
\[
\gamma_{obl}(l)=\left(\frac{2}{\pi}\left(\sin ^{-1}\left(t_1\right)+l \tan ^{-1}\left(t_2\right)\right)-|l|,1-\frac{2}{\pi}\left(\sin ^{-1}\left(t_1\right)+l \tan ^{-1}\left(t_2\right)\right),|l|\right)
\]
 where $t_1=\frac{1}{\sqrt{a(1-l^2)}}$ and $t_2=\frac{\sqrt{a(1-l^2)-1}}{l}$.
Call the intersections of $\gamma_{obl}$ with the boundary of the
action map $\mathfrak{O}_{1}$ (yellow/cyan) and $\mathfrak{O}_{2}$
(magenta/orange). The point $\mathfrak{O}_{1}$ has coordinates $\gamma_{obl}(\sqrt{\frac{a-1}{a}})=(1-\sqrt{\frac{a-1}{a}},0,\sqrt{\frac{a-1}{a}})$,
while $\mathfrak{O}_{2}$ is located at $\gamma_{obl}(0)=\frac{2}{\pi}(\sin^{-1}\left(\frac{1}{\sqrt{a}}\right),\cos^{-1}\left(\frac{1}{\sqrt{a}}\right),0)$.

\subsection{Lam\'{e} Coordinates}

The Lam\'e system is unusual in a number of ways. In this case we actually need to make use of the fact that the parameters $e_i$ live on the real projective line. This is the reason why this case is not visible in the original normalised $ab$-parameter space and a blow-up is required. Furthermore, it is a family that has larger symmetry group $SO(3)$. A larger symmetry group is in some sense related to super-integrability, however, since we don't have a Hamiltonian in the reduced system it is harder to define what this means. As we will see after another reduction, this system becomes the Euler top (see the Appendix).

\subsubsection{Separation of Variables and St\"{a}ckel System}

Lam\'{e} coordinates are an extension of ellipsoidal coordinates from $S^{2}$
onto $S^{3}$ and arise from limiting to three equal semi-major axes. As with oblate, there are two cases to consider: $(1\ (2\ 3\ 4))$
and $((1\ 2\ 3)\ 4)$ which are equivalent. Here we only discuss the $(1\ (2\ 3\ 4))$
coordinates which are defined as follows 
\begin{equation}
\begin{aligned}x_{1}^{2} & =s_{1}, & x_{3}^{2} & =\frac{\left(s_{1}-1\right)\left(f_{2}-s_{2}\right)\left(f_{2}-s_{3}\right)}{\left(f_{1}-f_{2}\right)\left(f_{2}-f_{3}\right)},\\
x_{2}^{2} & =-\frac{\left(s_{1}-1\right)\left(s_{2}-f_{1}\right)\left(s_{3}-f_{1}\right)}{\left(f_{2}-f_{1}\right)\left(f_{3}-f_{1}\right)} & x_{4}^{2}, & =\frac{\left(s_{1}-1\right)\left(f_{3}-s_{2}\right)\left(f_{3}-s_{3}\right)}{\left(f_{2}-f_{3}\right)\left(f_{3}-f_{1}\right)},
\end{aligned}
\label{eq:Lame coord def}
\end{equation}
where $0\le s_{1}\le1$ and $0\le f_{1}\le s_{2}\le f_{2}\le s_{3}\le f_{3}$.
A possible St\"{a}ckel matrix for these coordinates is 
\begin{equation}
\Phi_{L}=\frac{1}{4}\left(\begin{array}{ccc}
-\frac{1}{\left(s_{1}-1\right)s_{1}} & -\frac{1}{\left(s_{1}-1\right){}^{2}s_{1}} & 0\\
0 & \frac{1}{\left(f_{3}-s_{2}\right)\left(s_{2}-f_{2}\right)} & \frac{1}{\left(f_{3}-s_{2}\right)\left(s_{2}-f_{1}\right)\left(s_{2}-f_{2}\right)}\\
0 & \frac{1}{\left(f_{3}-s_{3}\right)\left(s_{3}-f_{2}\right)} & \frac{1}{\left(f_{3}-s_{3}\right)\left(s_{3}-f_{1}\right)\left(s_{3}-f_{2}\right)},
\end{array}\right).\label{eq:Lame stack}
\end{equation}
with integrals $(2H,2H-F_{L},G_{L}-f_{1}(2H-F_{L}))$
where $(F_{L},G_{L})=(\ell_{12}^{2}+\ell_{13}^{2}+\ell_{14}^{2},f_{1}l_{34}^{2}+f_{2}l_{24}^{2}+f_{3}l_{23}^{2})$. From (\ref{eq:Lame stack}), the separated momenta are given by 
\begin{equation}
\begin{aligned}p_{1}^{2} & =\frac{f_{L}-2hs_{1}}{4\left(s_{1}-1\right){}^{2}s_{1}}, &  &  & p_{k}^{2} & =-\frac{(f_{L}-2h)s_{k}+g_{L}}{4\left(f_{3}-s_{k}\right)\left(s_{k}-f_{1}\right)\left(s_{k}-f_{2}\right)},\end{aligned}
\label{eq:lame sep momenta}
\end{equation}
where $k=2,3$ and $(f_{L},g_{L})$ are functional values of $(F_{L},G_{L})$. We call the triple $(2H,F_{L},G_{L})$ on $T^{*}S^{3}$ the
Lam\'{e} integrable system and $(F_L,G_L)$ the corresponding reduced Lam\'{e} integrable system on $S^2\times S^2$.

The important feature of this case is the appearance of the integral $F_L$ with a $SO(3)$ symmetry given by the rotations generated by $\ell_{1i}$ using $B_L$. Similar to the prolate and oblate families, the Lam\'{e} system can also be obtained as a limit of the ellipsoidal system.
\begin{lem}
The integrals $(F_{L},G_{L})$ and separated momenta (\ref{eq:lame sep momenta})
for the Lam\'{e} integrable system can be obtained from their ellipsoidal counterparts \eqref{eq:Separation constants ellipsoidal} and \eqref{eq:psq ellipsoidal-1}.
\end{lem}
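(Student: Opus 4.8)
The plan is to follow the template of Lemma~\ref{Prolate degen proof } and its oblate analogue, but now collapsing \emph{three} semi-major axes simultaneously; this triple scaling is precisely the blow-up that turns the Lam\'e family into a codimension-one face rather than an isolated point. Concretely, I would keep $e_{1}=0$, introduce the rescaling
\begin{equation*}
\begin{aligned}
e_{j} &= 1+\epsilon f_{j-1}, & s_{k} &= 1+\epsilon\tilde{s}_{k}, & p_{k} &= \frac{\tilde p_{k}}{\epsilon}
\end{aligned}
\end{equation*}
for $j=2,3,4$ and $k=2,3$ (leaving $s_{1},p_{1}$ untouched), and let $\epsilon\to 0$. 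For each $\epsilon>0$ this is a genuine ellipsoidal coordinate system, and the map $(s_{k},p_{k})\mapsto(\tilde s_{k},\tilde p_{k})$ is canonical since $ds_{k}\wedge dp_{k}=d\tilde s_{k}\wedge d\tilde p_{k}$. The inequalities $e_{2}\le s_{2}\le e_{3}\le s_{3}\le e_{4}$ become $f_{1}\le\tilde s_{2}\le f_{2}\le\tilde s_{3}\le f_{3}$, matching the range in \eqref{eq:Lame coord def}, and $s_{1}\in[0,1+\epsilon f_{1}]\to[0,1]$ as required.

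For the integrals I would substitute $e_{j}=1+\epsilon f_{j-1}$ into \eqref{eq:Separation constants ellipsoidal} and expand in $\epsilon$. The zeroth-order terms give $\tilde\eta_{2}\to F_{L}$ and $\tilde\eta_{1}\to F_{L}+2H$ (using $e_{1}=0$, $e_{2},e_{3},e_{4}\to1$ and $\ell_{23}^{2}+\ell_{24}^{2}+\ell_{34}^{2}=2H-F_{L}$), so $F_{L}=\lim_{\epsilon\to0}\tilde\eta_{2}$. The second integral only appears at first order: a short computation gives
\begin{equation*}
\tilde\eta_{1}-\tilde\eta_{2}=2H+\epsilon\bigl(f_{3}\ell_{23}^{2}+f_{2}\ell_{24}^{2}+f_{1}\ell_{34}^{2}\bigr)+O(\epsilon^{2})=2H+\epsilon\,G_{L}+O(\epsilon^{2}),
\end{equation*}
so that $G_{L}=\lim_{\epsilon\to0}\tfrac{1}{\epsilon}(\tilde\eta_{1}-\tilde\eta_{2}-2H)$, a linear combination of $\eta_{1},\eta_{2}$ and the Casimir $H$. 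This is the key step, and the one that genuinely requires the triple scaling, since $\tilde\eta_{1}-\tilde\eta_{2}-2H$ vanishes identically in the limit and it is only its leading $\epsilon$-coefficient that provides the new integral.

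For the separated momenta I would insert the same substitution into \eqref{eq:psq ellipsoidal-1}, written as $p_{i}^{2}=-R(s_{i})/(4A(s_{i}))$ with $R(z)=2hz^{2}-\eta_{1}^{*}z+\eta_{2}^{*}$, $A(z)=\prod_{k}(z-e_{k})$, and track powers of $\epsilon$. For $i=1$ one has $A(s_{1})\to s_{1}(s_{1}-1)^{3}$, while using $\eta_{1}^{*}\to f_{L}+2h$, $\eta_{2}^{*}\to f_{L}$ the numerator factors as $-R(s_{1})\to(s_{1}-1)(f_{L}-2hs_{1})$; the shared factor $(s_{1}-1)$ cancels, leaving exactly the first expression in \eqref{eq:lame sep momenta}. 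For $k=2,3$ one finds $A(s_{k})=\epsilon^{3}(\tilde s_{k}-f_{1})(\tilde s_{k}-f_{2})(\tilde s_{k}-f_{3})(1+O(\epsilon))$ and, using $\eta_{1}^{*}-\eta_{2}^{*}-2h=\epsilon g_{L}+O(\epsilon^{2})$, $-R(1+\epsilon\tilde s_{k})=\epsilon\bigl(g_{L}+(f_{L}-2h)\tilde s_{k}\bigr)+O(\epsilon^{2})$; hence $\tilde p_{k}^{2}=\epsilon^{2}p_{k}^{2}\to\bigl(g_{L}+(f_{L}-2h)\tilde s_{k}\bigr)/\bigl(4(\tilde s_{k}-f_{1})(\tilde s_{k}-f_{2})(\tilde s_{k}-f_{3})\bigr)$, which is the second expression in \eqref{eq:lame sep momenta} after writing $(\tilde s_{k}-f_{1})(\tilde s_{k}-f_{2})(\tilde s_{k}-f_{3})=-(f_{3}-\tilde s_{k})(\tilde s_{k}-f_{1})(\tilde s_{k}-f_{2})$ and dropping the tildes.

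I expect the only real obstacle to be bookkeeping: keeping track of the cancelling powers of $\epsilon$ in numerator and denominator of the momenta for $k=2,3$ (a net $\epsilon^{3}$ against $\epsilon^{3}$), the slightly surprising cancellation of the single factor $(s_{1}-1)$ for $k=1$ once the limiting values of $\eta_{1}^{*},\eta_{2}^{*}$ are substituted, and verifying that the first-order coefficient of $\tilde\eta_{1}-\tilde\eta_{2}$ is \emph{exactly} $G_{L}$ and not $G_{L}$ plus some function of $H$ and $F_{L}$ (which would merely rescale the integral, but should be checked). Everything else is parallel to the proof of Lemma~\ref{Prolate degen proof }.
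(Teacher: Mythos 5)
Your proposal is correct, and the computations check out: with $e_{1}=0$, $e_{j}=1+\epsilon f_{j-1}$ one indeed gets $\tilde\eta_{2}\to F_{L}$, the first-order coefficient of $\tilde\eta_{1}-\tilde\eta_{2}-2H$ is exactly $\epsilon\,G_{L}$ (the $(1,j)$ pairs contribute only at order $\epsilon^{2}$ since $e_{m}+e_{n}-e_{m}e_{n}=1-\epsilon^{2}f_{m-1}f_{n-1}$ for $m,n\ge 2$), and the separated momenta come out as in \eqref{eq:lame sep momenta} after the stated cancellations. However, this is a genuinely different route from the paper's. The paper keeps $(e_{2},e_{3},e_{4})=(f_{1},f_{2},f_{3})$ fixed and sends $e_{1}=-1/\epsilon\to-\infty$, rescaling only the pair $(s_{1},p_{1})$; in that projective limit the integrals appear immediately as $(\tilde\eta_{1},\tilde\eta_{2})=(-\tfrac{1}{\epsilon}(2H-F_{L}),-\tfrac{1}{\epsilon}G_{L})$, so both target integrals are read off from leading-order coefficients of $\eta_{1}$ and $\eta_{2}$ separately, with no need to form a combination whose zeroth order cancels. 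Your collapse of three semi-major axes is the limiting process the paper attributes to \cite{KKM18} and explicitly sets aside as less simple; its cost is that $G_{L}$ is only visible in the $O(\epsilon)$ term of $\eta_{1}-\eta_{2}-2H$ and that two coordinate pairs must be rescaled instead of one, but its benefit is that it connects directly to the blow-up of the point $a=b=1$ in the normalised parameter space of Section~\ref{sec:S3-4} and to the paper's own description of Lam\'e coordinates as the degeneration with three equal semi-major axes, whereas the paper's version exploits the projective nature of the parameters. Both limits land on the same integrable system, consistent with the affine/projective equivalences of Lemma~\ref{eta lemma}.
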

\begin{proof}
A possible limiting process from ellipsoidal to Lam\'{e} coordinates is given in \cite{KKM18}. However in this case, it is much simpler to use the transformation
\begin{equation}
\begin{aligned}\left(e_{1},e_2,e_3,e_{4}\right) & =\left(-\frac{1}{\epsilon},f_1,f_2,f_3\right)\\
\left(s_{1},p_1\right) & =\left(f_2-\frac{\tilde{s}_1}{\epsilon},\epsilon \tilde{p}_1\right)
\end{aligned}
\label{eq:sub lame}
\end{equation}
for $\epsilon>0$ and $\tilde{s}_1\in [0,1]$. Applying \eqref{eq:sub lame} and taking the limit as $\epsilon\to 0$ immediately gives \[(\tilde{\eta}_1,\tilde{\eta}_2)=(-\frac{2H-F_L}{\epsilon},\frac{-G_L}{\epsilon})\]

The separated momenta are obtained using the same method as in the prolate case. 
\end{proof}

In the Lam\'{e} limit, the Uhlenbeck integrals $\tilde{F}_{i}$ are as
follows
\[
\begin{aligned}\tilde{F}_{1}=-F_{L}, &  & \tilde{F}_{2} & =\frac{\ell_{23}^{2}}{f_{1}-f_{2}}+\frac{\ell_{24}^{2}}{f_{1}-f_{3}}, &  & \tilde{F}_{3}=\frac{\ell_{23}^{2}}{f_{2}-f_{1}}+\frac{\ell_{34}^{2}}{f_{2}-f_{3}}, &  & \tilde{F}_{4}=\frac{\ell_{24}^{2}}{f_{3}-f_{1}}+\frac{\ell_{34}^{2}}{f_{3}-f_{2}}.\end{aligned}
\]
Note that up to projective transformations, the system only has one parameter $\frac{f_3-f_1}{f_2-f_1}$, but we use $f_i$ to keep the higher symmetry.

The vector field of $G_L$ given by $B_{\bm{L}} \nabla G_L$ has a semi-direct product stucture: the equations for $\ell_{23}, \ell_{24}, \ell_{34}$ decouple from the others, and they are in fact Euler's equations for the rigid body on $SO(3)$ with moments of inertia given by $f_i^{-1}$. The equation for the remaining three variables are linear equations with time-varying coefficients given by the solution of Euler's equation.

\subsubsection{Critical Points and Momentum Map}
Using the matrix $C=\text{diag}(-\frac{1}{\epsilon},f_1,f_2,f_3)$
gives the integrals 
\[
(I_{1},I_{2})=-\frac{1}{\epsilon}\left(2h-F_L+O(\epsilon),G_L+O(\epsilon)\right).
\]
We need consider the $\frac{1}{\epsilon}$
order term since the limit as $\epsilon\to0$ of $(I_{1},I_{2})$
is infinite.
\begin{prop}
The set of critical values of the momentum map for the Lam\'{e} system is composed of four straight lines $\mathfrak{L}_{j}:F_{L}=1-\frac{1}{f_{j}}G_{L}$
for $j=1,2,3$ and $\mathfrak{L}_{4}:F_{L}=0$. This is shown in Figure \ref{fig:Lame MM and Action Map} a).
\end{prop}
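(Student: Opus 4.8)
The plan is to follow the same two-pronged strategy used above for the ellipsoidal, prolate and oblate systems: a direct computation of the critical points of $(F_L,G_L)$ on $S^2\times S^2$ via Proposition~\ref{Bolsinov Theorem}, cross-checked against the compatible Poisson structure description set up just above with $C=\text{diag}(-1/\epsilon,f_1,f_2,f_3)$. I would take the Poisson-pencil route as the main line of argument, since the relevant rational function $\psi(\lambda)$ is already in place, and keep the direct computation in reserve to make the $\epsilon\to0$ step rigorous.

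First I would write, from Proposition~\ref{fact1} applied with the above $C$,
\[
\psi(\lambda)=2\,\frac{I_0\lambda^2+I_1\lambda+I_2}{\bigl(\lambda+\tfrac1\epsilon\bigr)(\lambda-f_1)(\lambda-f_2)(\lambda-f_3)},\qquad I_0=-2h,
\]
with $I_1=-\tfrac1\epsilon(2h-F_L)+O(1)$ and $I_2$ likewise of order $1/\epsilon$ with leading coefficient $\pm G_L$, and invoke Corollary~\ref{biham-critical-values}: the critical values are exactly those $(F_L,G_L)$ for which $\psi$ has a real root with $\psi\ge0$ that is either a pole $\lambda=c_i$ or a double root of the numerator. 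For the three finite poles $\lambda=f_j$ ($j=1,2,3$) I would require the numerator to vanish at $f_j$; expanding in $\epsilon$ and keeping the dominant ($1/\epsilon$) order produces a linear relation of the form $G_L=(2h-F_L)f_j$, i.e. $\mathfrak L_j:F_L=1-G_L/f_j$ after normalising $2h=1$. For the runaway pole $\lambda=c_1=-1/\epsilon$ I would evaluate the numerator there: the $O(1/\epsilon^2)$ contributions of $I_0\lambda^2$ and $I_1\lambda$ combine to $-F_L/\epsilon^2$, whose vanishing yields $\mathfrak L_4:F_L=0$. This last line is also recognisable directly as the $SO(3)$-fixed locus $\ell_{12}=\ell_{13}=\ell_{14}=0$ (the sphere $\ell_{23}^2+\ell_{24}^2+\ell_{34}^2=2h$), which is critical because $\nabla F_L$ vanishes on it, and this simultaneously identifies the critical points lying over $\mathfrak L_4$.

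Next I would dispose of the double-root alternative in Corollary~\ref{biham-critical-values}. In the $\epsilon\to0$ limit the numerator is, to leading order, linear in $\lambda$ (the term $-2h\lambda^2$ being subdominant except near $\lambda\sim1/\epsilon$, where it is absorbed by the $c_1$-pole), so it carries no finite double root; equivalently, the separated momenta \eqref{eq:lame sep momenta} are linear in each $s_k$, hence have only a single moving root. This is precisely why, in contrast to the ellipsoidal case, no parabolic arc of critical values appears. To finish, I would exhibit for each of the four lines an explicit one-parameter family of critical points from Proposition~\ref{Bolsinov Theorem} (rank-drop loci of $B_{\bm L}(\mu\nabla F_L+\nabla G_L)$, parametrised by $\mu$ together with the ignorable directions), check the reality/positivity conditions on \eqref{eq:lame sep momenta} so that these families are non-empty, and argue that they exhaust the singular set, so the four lines are not merely contained in but equal to the critical-value set.

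The main obstacle is the $\epsilon\to0$ limit itself: since $(I_1,I_2)$ diverge, the finite statements of Proposition~\ref{fact1} and Corollary~\ref{biham-critical-values} cannot be applied verbatim, and one must rescale (work with $\epsilon\psi(\lambda)$, equivalently with $\epsilon I_1,\epsilon I_2$) and track leading orders carefully, in particular to confirm that the pole at $\lambda=c_1$ contributes exactly $\mathfrak L_4$ and that no critical value is created or destroyed in the limit. The cleanest way to remove this worry is the direct computation of the critical points of $(F_L,G_L)$ on $S^2\times S^2$ via Proposition~\ref{Bolsinov Theorem}, which uses only the fixed Poisson structure $B_{\bm L}$ and involves no limiting procedure; this computation is in any case needed for the subsequent classification of the critical points and fibres.
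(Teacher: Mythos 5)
Your proposal is correct and takes essentially the same route as the paper: the compatible Poisson pencil with $C=\mathrm{diag}(-1/\epsilon,f_1,f_2,f_3)$, reading the three finite lines off the poles $\lambda=f_j$ and the line $F_L=0$ off the runaway pole $\lambda=-1/\epsilon$ (equivalently the $SO(3)$-fixed sphere $\ell_{12}=\ell_{13}=\ell_{14}=0$), with the limits of the ellipsoidal critical lines supplying the critical points. One small imprecision: the double-root branch does not simply fail to exist --- when $2h-F_L=O(\epsilon)$ the double root stays in $[f_1,f_2]$ and the parabolic arc collapses to the single point $(F_L,G_L)=(1,0)$, which the paper records explicitly since it is the degenerate three-way intersection $T_{123}$; this is harmless for the proposition as stated only because that point already lies on $\mathfrak{L}_1\cap\mathfrak{L}_2\cap\mathfrak{L}_3$.
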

\begin{figure}
\begin{centering}
\includegraphics[width=8cm,height=7cm]{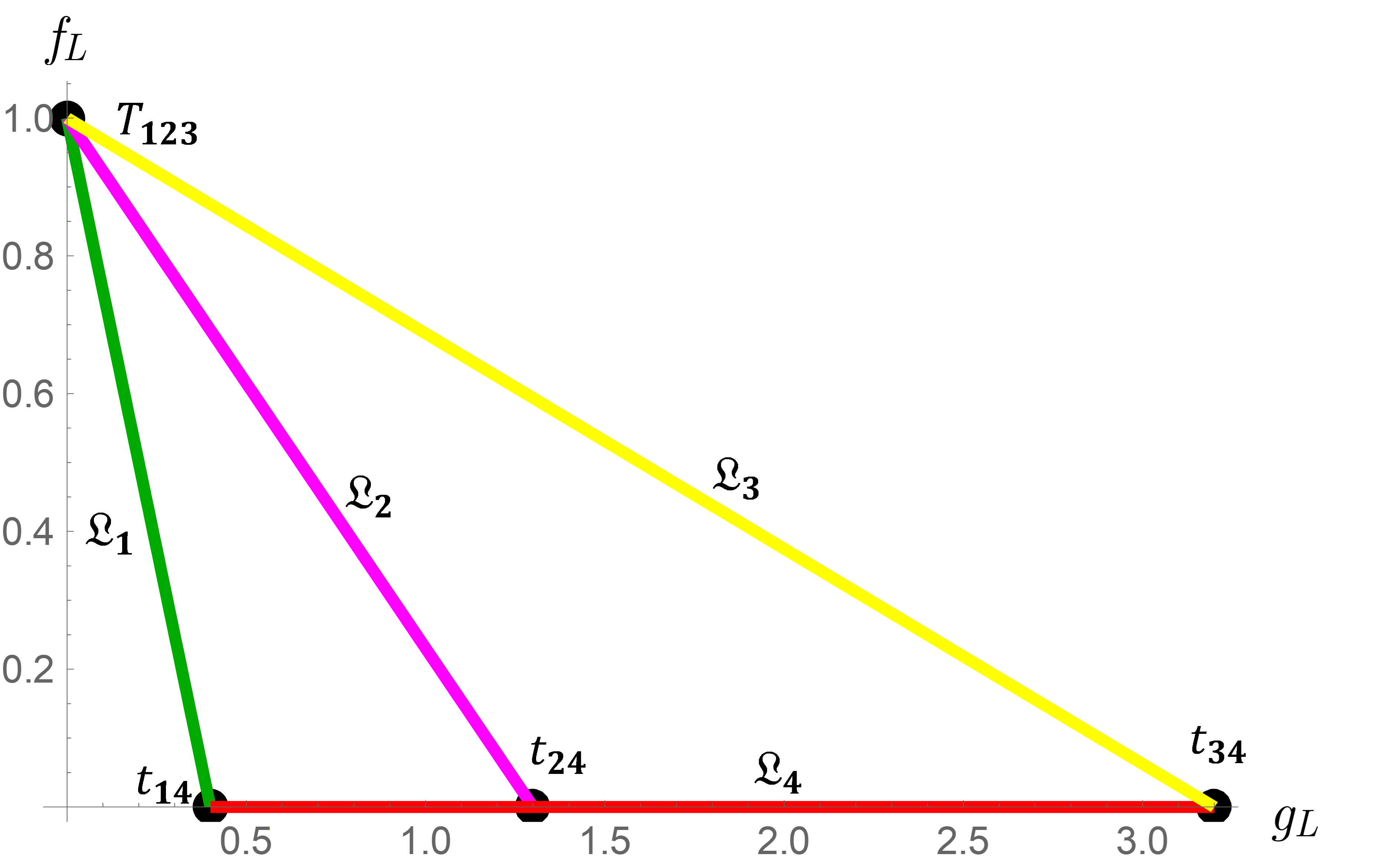}\includegraphics[width=7cm,height=7cm]{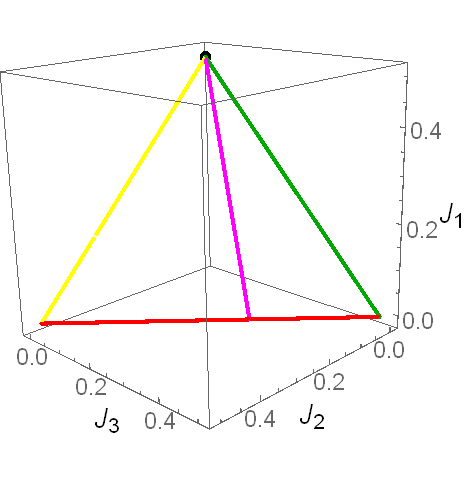}
\par\end{centering}
\caption{a) Momentum map for the Lam\'{e} system with $(f_{1},f_{2},f_{3})=(0.4,1.3,3.2)$.
b) Corresponding action map. \label{fig:Lame MM and Action Map}}
\end{figure}

\begin{proof}The critical points for the Lam\'{e} integrable systems are similar to
those for the lines in the ellipsoidal system:
\begin{enumerate}
\item The line $\mathfrak{L}_{4}:F_{L}=0$ is the limit as $\epsilon\to 0$ of the line $I_2=-\frac{1}{\epsilon}(I_1+\frac{1}{\epsilon})$ with $\lambda=-\frac{1}{\epsilon}$ and the critical
points are parametrised by $\bm{L}=(0,0,0,\ell_{23},\ell_{24},\ell_{34})$
with $\ell_{23}^{2}+\ell_{24}^{2}+\ell_{34}^{2}=1$ and $f_{1}\ell_{34}^{2}+f_{2}\ell_{24}^{2}+f_{3}\ell_{23}^{2}=g_{L}$.

\item The line $\mathfrak{L}_{1}:F_{L}=1-\frac{1}{f_{1}}G_{L}$ is the limit as $\epsilon\to 0$ of the line $I_2=f_1(I_1-f_1)$ with
$\lambda=f_1$ and the critical points are parametrised by $\bm{L}=(0,\ell_{13},\ell_{14},0,0,\ell_{34})$
with $\ell_{13}^{2}+\ell_{14}^{2}+\ell_{34}^{2}=1$ and $f_{1}\ell_{34}^{2}=g_{L}$.

\item The line $\mathfrak{L}_{2}:F_{L}=1-\frac{1}{f_{2}}G_{L}$ is the limit as $\epsilon\to 0$ of the line $I_2=f_2(I_1-f_2)$ with
$\lambda=f_2$ and the critical points are parametrised
by $\bm{L}=(\ell_{12},0,\ell_{14},0,\ell_{24},0)$ with $\ell_{12}^{2}+\ell_{14}^{2}+\ell_{24}^{2}=1$
and $f_{2}\ell_{24}^{2}=g_{L}$. 

\item The line $\mathfrak{L}_{3}:F_{L}=1-\frac{1}{f_{3}}G_{L}$ is the limit as $\epsilon\to 0$ of the line $I_2=f_3(I_1-f_3)$ with 
$\lambda=f_3$ and the critical points are parametrised
by $\bm{L}=(\ell_{12},\ell_{13},0,\ell_{23},0,0)$ with $\ell_{12}^{2}+\ell_{13}^{2}+\ell_{23}^{2}=1$
and $f_{3}\ell_{23}^{2}=g_{L}$. 
\end{enumerate}

The curve $I_{2}=\frac{I_{1}^{2}}{8h}$ for $\lambda\in[f_1,f_2]$
shrinks to the degenerate point $(F_{L},G_{L})=(1,0)$ in the limit $\epsilon\to 0$.
\end{proof}

Let the intersections of $\mathfrak{L}_{i}$ and $\mathfrak{L}_{j}$
be denoted by $t_{ij}$ and the three way intersection of $\mathfrak{L}_{1}$, $\mathfrak{L}_{2}$, $\mathfrak{L}_{3}$ by $T_{123}$.
The intersections $t_{14}$ and $t_{34}$ are elliptic-elliptic critical
values with 2 points in their fibres $2S^{1}$ on $S^2\times S^2$. The point $t_{24}$
is elliptic hyperbolic and its fibre is $C_{2}$. 

\begin{lem}
   The three-way intersection $T_{123}$ at $(F_{L},G_{L})=(1,0)$ is a degenerate singularity of spherical type. 
\end{lem}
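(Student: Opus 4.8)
The plan is to exhibit the critical fibre over $(F_L,G_L)=(1,0)$ explicitly, to show that it is an entire $2$-sphere of rank-$0$ points -- which already forces degeneracy -- and then to identify it with the spherical model of \cite{RonanThesis} through the $SO(3)$-structure underlying the Lam\'e system.

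First I would describe the fibre. On the reduced space $\mathcal{C}_1=\sum_{i<j}\ell_{ij}^2=2h=1$, so the equation $F_L=\ell_{12}^2+\ell_{13}^2+\ell_{14}^2=1$ forces $\ell_{23}=\ell_{24}=\ell_{34}=0$; then $G_L=f_1\ell_{34}^2+f_2\ell_{24}^2+f_3\ell_{23}^2=0$ and the Pl\"ucker relation $\mathcal{C}_2=\ell_{12}\ell_{34}-\ell_{13}\ell_{24}+\ell_{14}\ell_{23}=0$ hold automatically, and conversely every $\bm{L}$ with $\ell_{23}=\ell_{24}=\ell_{34}=0$ and $\ell_{12}^2+\ell_{13}^2+\ell_{14}^2=1$ lies in this fibre. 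Hence the critical fibre $\mathcal{F}$ over $(1,0)$ is the round $2$-sphere parametrised by $(\ell_{12},\ell_{13},\ell_{14})\in S^2$; in the $(\bm{X},\bm{Y})$ coordinates \eqref{eq:ls to Xy} it is the sphere $\{\,\bm{Y}=\text{diag}(1,-1,1)\,\bm{X},\ |\bm{X}|^2=\tfrac14\,\}\subset S^2\times S^2$. This is also visible as the limit: the parabolic arc $\mathcal{C}$ of the ellipsoidal system (with its already-degenerate endpoints $d_2,d_3$) shrinks to this single point as $\epsilon\to0$.

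Next I would check that every point of $\mathcal{F}$ is a rank-$0$ critical point of $(F_L,G_L)$. One has $\nabla_{\bm{L}}G_L=2(0,0,0,f_3\ell_{23},f_2\ell_{24},f_1\ell_{34})^t$, which vanishes identically on $\mathcal{F}$, while $\nabla_{\bm{L}}F_L=2(\ell_{12},\ell_{13},\ell_{14},0,0,0)^t$, and substituting $\ell_{23}=\ell_{24}=\ell_{34}=0$ into $B_{\bm{L}}$ gives $B_{\bm{L}}\nabla_{\bm{L}}F_L=0$ on $\mathcal{F}$ (equivalently, in the $(\bm{X},\bm{Y})$ variables $\nabla_{\bm{X}}F_L=4\bm{X}$ and $\nabla_{\bm{Y}}F_L=4\bm{Y}$ there, and $\bm{X}\times\bm{X}=\bm{Y}\times\bm{Y}=0$). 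Thus the Hamiltonian vector fields of both $F_L$ and $G_L$ vanish on all of $\mathcal{F}$, so $\mathcal{F}$ is a $2$-sphere of rank-$0$ singular points. Degeneracy then follows at once: a non-degenerate rank-$0$ point of a two-degree-of-freedom system is an isolated fixed point of the joint flow (its linearisation is of elliptic-elliptic, elliptic-hyperbolic, hyperbolic-hyperbolic or focus-focus type, none of which has a zero eigenvalue), hence isolated among rank-$0$ points, so a $2$-dimensional family of rank-$0$ points cannot be non-degenerate; equivalently, for every $(\mu,\nu)$ the linearisation $\nabla[B_{\bm{L}}(\mu\nabla F_L+\nu\nabla G_L)]$ contains the $2$-plane $T\mathcal{F}$ in its kernel, so the quadratic parts of $F_L$ and $G_L$ never span a Cartan subalgebra of $\mathfrak{sp}(4,\mathbb{R})$.

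Finally I would identify $T_{123}$ with the spherical singularity. The functions $\bm{J}:=(\ell_{23},\ell_{24},\ell_{34})$ satisfy $\{J_a,J_b\}_{\bm{L}}=\varepsilon_{abc}J_c$, so $\bm{J}$ is the momentum map of a Hamiltonian $SO(3)$-action on $S^2\times S^2$ (rotation of the $x_2,x_3,x_4$ block); one has $F_L=2h-|\bm{J}|^2$, which is a function of the Casimir and hence $SO(3)$-invariant, while $G_L=G_L(\bm{J})$ is the Euler-top Hamiltonian (quadratic in $\bm{J}$, with moments of inertia $f_i^{-1}$), so the momentum map $(F_L,G_L)$ factors through $\bm{J}$ and the Lam\'e system is the pullback along $\bm{J}$ of the Euler top (see the Appendix). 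The critical fibre is $\mathcal{F}=\bm{J}^{-1}(0)$, a single $SO(3)$-orbit with stabiliser $SO(2)$, i.e.\ $\mathcal{F}\cong SO(3)/SO(2)\cong S^2$, lying over the $SO(3)$-fixed point $\bm{J}=0$, which is the equilibrium of the reduced Euler top. This is precisely the local model of the spherical singularity of \cite{RonanThesis}, which also appears in the spherical system; to conclude I would compare a saturated neighbourhood of $\mathcal{F}$ with the corresponding neighbourhood there, both being fibrewise symplectomorphic to a neighbourhood of the zero section of $T^*S^2$ carrying the pair ``$2h-|\bm{p}|^2$, Euler-top Hamiltonian'', so that equality of germs gives the claim. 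The main obstacle is this last step: producing (or invoking the appropriate uniqueness statement for) the fibrewise symplectomorphism that pins the germ of the singular Lagrangian fibration near $\mathcal{F}$ down to Ronan's normal form; by contrast the description of $\mathcal{F}$, the rank-$0$ property and the degeneracy are all routine computations with $B_{\bm{L}}$.
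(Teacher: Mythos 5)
Your proposal is correct, and for the degeneracy part it takes a genuinely different route from the paper. The paper works pointwise: it evaluates the linearisation $\nabla_{\bm L}[B_{\bm L}(\alpha\nabla F_L+\beta\nabla G_L)]$ at $T_{123}$, observes that the resulting matrix \eqref{linlame} is nilpotent (all six eigenvalues zero for every $\alpha,\beta$), and concludes degeneracy directly, then simply records that the fibre is $S^2$ with every point critical and that the rank of the differential drops, citing \cite{RonanThesis} for the name. You instead first establish that the whole fibre is a $2$-sphere of rank-$0$ points (your computation that $B_{\bm L}\nabla F_L=0$ and $\nabla G_L=0$ on $\ell_{23}=\ell_{24}=\ell_{34}=0$ is right, and is cleanest in the $(\bm X,\bm Y)$ picture) and deduce degeneracy from the fact that non-degenerate rank-$0$ points are isolated; this is more conceptual and explains \emph{why} the linearisation must be everywhere singular, whereas the paper's explicit matrix gives slightly more (it exhibits a nonzero nilpotent part transverse to the fibre). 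Your identification of the singularity as spherical type via the $SO(3)$ momentum map $\bm J=(\ell_{23},\ell_{24},\ell_{34})$, with $F_L=2h-|\bm J|^2$ and $G_L$ the Euler-top Hamiltonian so that the momentum map factors through $\bm J$ and the fibre is the orbit $\bm J^{-1}(0)\cong SO(3)/SO(2)$, is not in the paper's proof but is consistent with (and sharpens) its remarks on the semi-direct product structure of $X_{G_L}$ and the Euler top in the Appendix. You are right to flag the final germ/normal-form comparison as the only non-routine step; note that the paper does not supply it either -- it asserts the characterising features (rank drop, $S^2$ fibre of critical points, globally continuous but non-smooth action) and invokes \cite{RonanThesis} -- so your proof is at least as complete as the published one on that point.
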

\begin{proof}
  The linearisation of the vector field generated by $F_L$ and $G_L$ is $\nabla_{\bm L}B_{\bm L}(\alpha \nabla_{\bm L}F_L + \beta \nabla_{\bm L} G_L)$. At the three-way intersection $T_{123}$, this matrix becomes 
  \begin{equation}
      \left(\begin{array}{cc}
\bm 0 & \begin{array}{ccc}
2(\alpha-f_3\beta)\ell_{13} & 2(\alpha-f_2\beta)\ell_{14} & 0\\
-2(\alpha-f_3\beta)\ell_{12} & 0 & 2(\alpha-f_1\beta)\ell_{14}\\
0 & -2(\alpha-f_2\beta)\ell_{12} & -2(\alpha-f_1\beta)\ell_{13}
\end{array}\\
\bm 0 & \bm 0
\end{array}\right)\label{linlame}
  \end{equation}
  where $\bm 0$ is a $3\times 3$ matrix of all zeros.
 The eigenvalue of \eqref{linlame} is $(0,0,0,0,0,0)$ meaning that $T_{123}$ is a degenerate critical value. The rank of the differential of the moment map drops by $1$ at $T_{123}$.  This is known as a spherical type singularity studied in the thesis \cite{RonanThesis}. Systems with a spherical type singularity are characterised by the presence of a globally defined, continuous but not smooth action. The fibre of a spherical type singularity is diffeomorphic to a product of spheres. In this case the preimage of $T_{123}$ is $S^{2}$ and every point in this fibre is critical.  
\end{proof}
 It was shown in \cite{RonanThesis} that the geodesic flow on $S^n$ in polyspherical coordinates gives rise to systems containing a spherical type singularity. While the Lam\'e coordinate system is not polyspherical, the reduced Lam\'e integrable system is an example of a system that has a spherical singularity that was not studied in \cite{RonanThesis}. Since the Lam\'e coordinate system is obtained by extending the ellipsoidal coordinates on $S^2$ to $S^3$, we can conjecture that systems originating from the geodesic flow on $S^n$ in coordinates obtained from extending a coordinate system from $S^k$ to $S^n$ where $2\leq k<n$ will contain a spherical type singularity.

\subsubsection{Actions}\label{lame-act}

The actions of the Lam\'{e} system are given by 
\[
\begin{aligned}J_{1} & =\frac{2}{\pi}\int_{0}^{f_L}p_{1}ds &  &  & J_{2} & =\frac{2}{\pi}\int_{f_{1}}^{\min(r_{2},f_{2})}p_{2}ds &  &  & J_{3} & =\frac{2}{\pi}\int_{\max(f_{2},r_{2})}^{f_{3}}p_{3}ds\end{aligned}
\]
 where  $r_{2}=\frac{g_{L}}{1-f_{L}}$. Theorem~\ref{action thm} applies for the Lam\'e system also. The
first action evaluates to 
\begin{equation}
J_{1}=\text{\ensuremath{1-\sqrt{1-f_{L}}.}}\label{eq:J1 Lame}
\end{equation}

Notice that $F_L$ has a vector field $B_{\bm L} \nabla F_L$ has a flow that is the rotation of $(\ell_{12}, \ell_{13}, \ell_{14})$ about the fixed axis given by $(\ell_{34}, -\ell_{24}, \ell_{23})$. The frequency of this rotation is given by the length of the axis, and is hence not constant. 

\begin{lem}
    $J_{1} = 1 - \sqrt{ 1 - F_L}$ is an almost global $S^1$-action.
\end{lem}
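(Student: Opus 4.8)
The plan is to show that $J_1 = 1 - \sqrt{1 - F_L}$ generates a Hamiltonian flow that is $2\pi$-periodic on a dense open subset of $S^2 \times S^2$, and to identify precisely the locus where this periodicity fails. First I would observe that the vector field of $F_L$ with respect to $B_{\bm L}$ is, by the remark preceding the lemma, the infinitesimal rotation of $(\ell_{12}, \ell_{13}, \ell_{14})$ about the axis $\bm{a} = (\ell_{34}, -\ell_{24}, \ell_{23})$, which is a Casimir direction for this sub-bracket; hence $F_L = \ell_{12}^2 + \ell_{13}^2 + \ell_{14}^2$ is constant along the flow and $|\bm{a}|^2 = \ell_{23}^2 + \ell_{24}^2 + \ell_{34}^2 = 2h - F_L$ is also constant. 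The flow of $F_L$ itself rotates with angular speed proportional to $|\bm{a}| = \sqrt{2h - F_L}$, so $X_{F_L}$ is periodic with period $2\pi / \sqrt{2h-F_L}$ wherever $\bm{a} \neq 0$; this period depends on the fibre, so $F_L$ is not an $S^1$-action. The point of passing to $J_1$ is exactly to rescale: since $\nabla J_1 = \tfrac{1}{2\sqrt{1-F_L}} \nabla F_L$ (with $2h = 1$), the flow of $J_1$ is the flow of $F_L$ reparametrised by the constant-on-fibres factor $\tfrac{1}{2\sqrt{1-F_L}} = \tfrac{1}{2|\bm{a}|}$, which turns the period into exactly $2\pi$. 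I would verify this by computing $X_{J_1} = \tfrac{1}{2|\bm a|} X_{F_L}$ and noting the flow of $X_{F_L}$ is rotation by arclength-speed $|\bm a|$, so $X_{J_1}$ rotates at unit speed; the closed orbits are circles (the $\bm a$-latitude circles of the first $S^2$ factor) of circumference $2\pi$, giving a genuine $S^1 = \mathbb{R}/2\pi\mathbb{Z}$ action there.

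Next I would isolate the bad set. The reparametrisation factor $\tfrac{1}{2\sqrt{1-F_L}}$ blows up as $F_L \to 1 = 2h$, i.e. on the fibre over the spherical singularity $T_{123}$ at $(F_L, G_L) = (1,0)$, where $\bm a = 0$ and $\bm L = (\ell_{12}, \ell_{13}, \ell_{14}, 0, 0, 0)$ with $\ell_{12}^2 + \ell_{13}^2 + \ell_{14}^2 = 1$. On this fibre $X_{F_L}$ vanishes identically (the axis of rotation is zero), so every point is fixed, but $J_1$ is only continuous and not smooth there — it has the $1 - \sqrt{1-F_L}$ square-root singularity — so no smooth $S^1$-action extends over it. Away from $F_L = 1$, i.e. on $S^2 \times S^2$ minus this single two-sphere fibre, $1 - F_L > 0$ so $J_1$ is smooth and the flow is $2\pi$-periodic by the computation above; hence $J_1$ defines a global $S^1$-action on the complement of the preimage of the isolated momentum value $T_{123}$. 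This is precisely the definition of "almost global $S^1$-action" given in the statement of Theorem 3 (the action fails in the preimage of an isolated point of the image of the momentum map), and matches the spherical-type-singularity picture established in the preceding lemma and in \cite{RonanThesis}.

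The main obstacle is the bookkeeping at $F_L = 1$: one must confirm that this really is an \emph{isolated} point of the image (so the failure is confined to one fibre), that $J_1$ is continuous across it — which follows because $1 - \sqrt{1-F_L} \to 1$ continuously — and that there is no way to re-choose the action to smooth it out, which is exactly the content of the spherical-singularity obstruction already recorded in the previous lemma. A secondary point to check carefully is the normalisation $2h = 1$ versus a general energy level and that the circles genuinely close up (no irrational winding), but this is immediate since $X_{J_1}$ is a single rotation generator, not a combination of two. Given the structural results already in hand, the proof is short: identify $X_{J_1}$ as unit-speed rotation off the bad fibre, note periodicity $2\pi$, and cite the spherical singularity at $T_{123}$ for the failure locus.
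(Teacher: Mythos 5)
Your proposal is correct and follows essentially the same route as the paper: identify the flow of $F_L$ as rotation of $(\ell_{12},\ell_{13},\ell_{14})$ about the axis $(\ell_{34},-\ell_{24},\ell_{23})$, note that passing to $J_1=1-\sqrt{1-F_L}$ rescales the generator by the fibre-constant factor $1/(2\sqrt{1-F_L})=1/(2|\bm a|)$ so the flow becomes $2\pi$-periodic, and observe that the action fails exactly where the axis vanishes, i.e.\ on the sphere $\ell_{12}^2+\ell_{13}^2+\ell_{14}^2=1$ over $T_{123}$. (Only a cosmetic quibble: the orbits are latitude circles of a sphere in the $(\ell_{12},\ell_{13},\ell_{14})$-space, not of one of the $S^2$ factors in the $(\bm X,\bm Y)$ splitting of $S^2\times S^2$.)
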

\begin{proof}
    The vector field $B_{\bm L}\nabla J_{1}$ has periodic flow which is given by the rotation about the same axis as the flow of $F_L$, but here the axis is normalised because we need to divide by $\sqrt{ 1 - F_L}$ and using the Casimir $2h=1$ this means to divide by the length of the axis. It is only ``almost'' global because when $2h - F_L = \ell_{34}^2 + \ell_{24}^2 + \ell_{23}^2 = 0$ the normalisation factor vanishes and the vector field is not defined. Because of $2h=1$ this occurs on the sphere $S^2$ given by $\ell_{12}^2 + \ell_{13}^2 + \ell_{14}^2 = 1$.
\end{proof}
Note that \eqref{eq:J1 Lame} means that lines of constant $f_{L}$ correspond to
lines of constant $J_{1}$ in action space. In general, we have 
\begin{lem}
    Straight lines in the image of the momentum map $(F_L,G_L)$ of the reduced Lam\'e system on $S^2\times S^2$ given by $F_L=1-\frac{1}{r_2}G_L$ maps to straight lines in action space.
\end{lem}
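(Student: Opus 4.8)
The plan is to exploit a scaling structure in the separated momenta: with the Casimir normalisation $2h=1$ used throughout, the integral $F_L$ enters $p_2$ and $p_3$ only through an overall factor. First I would rewrite the numerators appearing in the second and third formulas of \eqref{eq:lame sep momenta}. With $2h=1$ we have $(f_L-1)s_k+g_L=(f_L-1)(s_k-r_2)$, where $r_2=g_L/(1-f_L)$ is precisely the root already appearing in the limits of the $J_2$- and $J_3$-integrals. Hence, on the intervals where the momenta are real,
\[
p_k \;=\; \sqrt{1-f_L}\;\sqrt{\frac{s_k-r_2}{4(f_3-s_k)(s_k-f_1)(s_k-f_2)}}\,,\qquad k=2,3,
\]
with the sign of the inner root fixed so that $p_k\ge 0$, and the key point is that the second factor depends on $s_k$ and $r_2$ (and the fixed parameters $f_1,f_2,f_3$) but \emph{not} on $f_L$.

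Next I would observe that the endpoints of the $J_2$- and $J_3$-integrals, namely $[f_1,\min(r_2,f_2)]$ and $[\max(f_2,r_2),f_3]$, are likewise determined by $r_2$ and the $f_i$ alone. Pulling the common factor $\sqrt{1-f_L}$ out of the integrals therefore gives
\[
J_2 = \sqrt{1-f_L}\,\Psi_2(r_2),\qquad J_3 = \sqrt{1-f_L}\,\Psi_3(r_2),
\]
where $\Psi_2,\Psi_3$ are (complete elliptic-type) integrals that are functions of $r_2$ only. Combining this with the closed form $J_1=1-\sqrt{1-f_L}$ from \eqref{eq:J1 Lame}, so that $\sqrt{1-f_L}=1-J_1$, yields
\[
(J_1,J_2,J_3) \;=\; (1,0,0) + (1-J_1)\bigl(-1,\;\Psi_2(r_2),\;\Psi_3(r_2)\bigr).
\]
Along a line $F_L=1-\tfrac1{r_2}G_L$ in the momentum image the parameter $r_2$ is constant, so $\Psi_2(r_2),\Psi_3(r_2)$ are constants, and the right-hand side sweeps out a straight line segment in action space as $f_L$ (equivalently $J_1$) varies; the segment emanates from $(1,0,0)$, the image of the spherical singularity $T_{123}$ at $(F_L,G_L)=(1,0)$ through which all of these lines pass. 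Consistency with $J_1+J_2+J_3=1$ (Lemma~\ref{Action lemma}) forces $\Psi_2(r_2)+\Psi_3(r_2)=1$, and the four boundary lines $\mathfrak L_j$ are the special cases $r_2=f_j$, recovering the edges of the action triangle meeting at $(1,0,0)$.

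I expect the only genuine difficulties to be bookkeeping rather than structural. One must check that over the admissible range of $f_L$ the root $r_2$ stays on a fixed side of $f_2$, so that the $\min$/$\max$ in the integration limits never switch and the ``constant bounds'' argument is valid; keep careful track of the sign of $s_k-r_2$ against the sign of the cubic denominator on each interval so that $p_k$ is genuinely the non-negative square root; and determine precisely the range of $f_L$ for which the line $F_L=1-\tfrac1{r_2}G_L$ actually meets the image of the momentum map, so that one obtains the correct line \emph{segment} rather than an unbounded line.
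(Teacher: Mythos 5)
Your proposal is correct and follows essentially the same route as the paper: factor $\sqrt{1-F_L}$ out of the separated momenta so that $J_2$ (and $J_3$) become $(1-J_1)$ times a function of $r_2$ alone, then use $J_1=1-\sqrt{1-F_L}$ and the sum relation $J_1+J_2+J_3=1$ to conclude linearity. Your additional bookkeeping remarks (sign of $s_k-r_2$, stability of the $\min/\max$ bounds, the segment versus full line) are sensible refinements but do not change the argument.
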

\begin{proof}

Observe that
    \[
    J_{2}=\sqrt{1-f_L}\int_{f_1}^\gamma\sqrt{\frac{r_2-s}{(s-f_1)(s-f_2)(s-f_3)}}ds=(1-J_{1,L})\mathcal{F}(r_2)
    \]
    where $\gamma=\min(r_{2},f_{2})$,
    $r_2=\frac{g_L}{1-f_L}$ and $\mathcal{F}$ is function of $r_2$ only. Since $J_{3,L}=1-J_{1,L}-J_{2}$, this implies that the image of a straight line $F_L=1-\frac{1}{r_2}G_L$ with constant slope $\frac{1}{r_2}$ under the action map is again a straight line. 
%
\end{proof}

Figure \ref{fig:Lame MM and Action Map} b) shows an example of
the action map. Let the magenta line in the interior of the action
map be denoted by $\mathfrak{L}_{M}$. This has parameterisation 

\[
(J_{1},J_{2},J_{3})=(J_{1},\frac{2}{\pi}(1-J_{1})\sin^{-1}\Delta,\frac{2}{\pi}(1-J_{1})\cos^{-1}\Delta)
\]
 where $\Delta=\sqrt{\frac{f_{1}-f_{2}}{f_{1}-f_{3}}}$.
The line intersects the boundary of the action map at $(1,0,0)$ and
$\frac{2}{\pi}(0,\sin^{-1}(\Delta),\cos^{-1}(\Delta)$.

The fact that lines of constant $f_L$ and lines through $T_{123}$ are mapped to straight lines does not imply it is a linear map, because the map along these lines is determined by the non-linear map $\mathcal{F}$.

Since away from $f_l = 1$, the action variable $J_{1}$ is defined we can consider reduction with respect to the flow of $J_{1}$ on levels with $0 < f_L < 1$. Fixing the action and identifying the corresponding angle variable to a point gives the action $J_{2}$ for that constant value of $f_L$, and up to an overall constant factor this is the action of the Euler top. 

\subsection{Spherical Coordinates}

Spherical coordinates (or rather poly-spherical coordinates) correspond to the case where simultaneously there is an $SO(2)$ and an $SO(3)$ symmetry. Accordingly we do have a global $S^1$ action. However, the induced integrable system on $S^2 \times S^2$ is not semi-toric because it has a degenerate point, which corresponds to the critical values at which the almost global action is not differentiable.

\subsubsection{Separation of Variables}

The two forms of spherical coordinates are found by setting $f_{1}=f_{2}$
or $f_{2}=f_{3}$ in Lam\'{e} coordinates. We call these the $12$ and
$23$ spherical coordinates. The two systems are equivalent by a permutation of coordinates. These can also be obtained by setting
$a=1$ in prolate and oblate coordinates, respectively. The $23-$spherical
coordinate system $(1\ (2\ (3\ 4)))$ is defined by 
\begin{equation}
\begin{aligned}x_{1}^{2} & =s_{1}, &  & x_{2}^{2}=\left(1-s_{1}\right)s_{2},\\
x_{3}^{2} & =\left(1-s_{1}\right)\left(1-s_{2}\right)s_{3}, &  & x_{4}^{2}=\left(1-s_{1}\right)\left(1-s_{2}\right)\left(1-s_{3}\right),
\end{aligned}
\label{eq:RR sph}
\end{equation}
where $0\le s_{k}\le1$ and $k=1,2,3$. \textcolor{black}{Due to
the simplicity of these coordinates, we can manually separate the
corresponding Hamilton-Jacobi equation. The geodesic Hamiltonian can
be expressed as 
\begin{equation}
H_{23}=\frac{2\left(p_{2}^{2}\left(s_{2}-1\right)s_{2}-p_{1}^{2}\left(s_{1}-1\right){}^{2}s_{1}-\frac{p_{3}^{2}\left(s_{3}-1\right)s_{3}}{s_{2}-1}\right)}{s_{1}-1}.\label{eq:H23}
\end{equation}
The integrals are $(2H_{23},\ell_{34},G_{23})$ with separated momenta 
\begin{equation}
\begin{aligned}p_{1}^{2} & =\frac{g_{23}-2hs_{1}}{4s_{1}\left(s_{1}-1\right)^{2}} &  &  & p_{2}^{2} & =\frac{(g_{23}-2h)(s_{2}-1)-l_{34}^{2}}{4s_{2}\left(s_{2}-1\right)^{2}} &  &  & p_{3}^{2} & =\frac{l_{34}^{2}}{4s_{3}\left(1-s_{3}\right)}\end{aligned}
\label{eq:Sep 23 system}
\end{equation}
}where $G_{23}=\ell_{12}^{2}+\ell_{13}^{2}+\ell_{14}^{2}$ and $(l_{34},g_{23})$ are functional values of $(\ell_{34}^2,G_{23})$.

To obtain $(\ell_{34},G_{23})$ and (\ref{eq:Sep 23 system}) from
the Lam\'{e} system, we set $(f_{3},s_{3},p_3)=(f_{2}+\epsilon,f_{2}+\epsilon\tilde{s}_{3},\tilde{p}_3/\epsilon)$
where $\tilde{s}_{3}\in[0,1]$ and normalise $(f_{1},f_{2})=(0,1)$.
To come from oblate, we let $(a,s_{3},p_3)=(1+\epsilon,1+\epsilon\tilde{s}_{3},\tilde{p}_3/\epsilon)$.
\subsubsection{Critical Points and Momentum Map}
The critical points and values are easily obtained by direct computation to give
\begin{prop}
\label{Theorem SPh}The image of momentum map for the $23-$spherical system
$(\ell_{34},G_{23})$ with $2h=1$ has critical values $\mathfrak{C}_{1}:G_{23}=1-\ell_{34}^{2}$
and $\mathfrak{C}_{2}:G_{23}=0$ which are both codimension
one elliptic (see Figure \ref{fig: RR sph }). 
\end{prop}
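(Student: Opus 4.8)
The plan is to exploit the explicit simplicity of the $23$-spherical separated momenta \eqref{eq:Sep 23 system} and the fact that, on the reduced space $S^2\times S^2$, the integral $\ell_{34}$ generates a genuine global $S^1$-action, so that only the second integral $G_{23}$ can contribute nontrivial critical structure beyond the obvious boundary strata. Concretely, I would first recall that $G_{23}=\ell_{12}^2+\ell_{13}^2+\ell_{14}^2$ and that the Casimirs fix $\mathcal C_1=\sum_{i<j}\ell_{ij}^2=2h=1$ and $\mathcal C_2=\ell_{12}\ell_{34}-\ell_{13}\ell_{24}+\ell_{14}\ell_{23}=0$. On $S^2\times S^2$ a point is critical for the pair $(\ell_{34},G_{23})$ precisely when there exist $(\alpha,\beta)\ne(0,0)$ with $B_{\bm L}(\alpha\nabla_{\bm L}\ell_{34}+\beta\nabla_{\bm L}G_{23})=0$ on the leaf; so the first step is to write down this linear system and solve it, exactly as was done for the ellipsoidal lines in Proposition~\ref{The-bifurcation-diagram Lemma}. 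Alternatively — and this is the cleaner route — one may take the $a\to1$ (equivalently $\epsilon\to0$) limit of the oblate results in Figure~\ref{fig:Oblate root and MM}: the cyan curve $\mathcal O_1:G_{obl}=(\sqrt a-\sqrt{a-1}\,|\ell_{34}|)^2$ degenerates to $G_{23}=1$ (a single point on $\mathfrak C_1$ together with its complement), the red/yellow curve $\mathcal O_2:G_{obl}=1-\ell_{34}^2$ survives as $\mathfrak C_1$, and $\mathcal O_3:G_{obl}=0$ survives as $\mathfrak C_2$, while the degenerate $\ell_{34}=0$ segments disappear upon ``opening up'' from $\ell_{34}^2$ to $\ell_{34}$. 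I would carry out both checks and present the direct one as the proof.

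The second step is to identify the critical sets explicitly and verify the claimed range. Setting $\beta=0$ forces the $\lambda=0$ stratum: $\ell_{12}=\ell_{13}=\ell_{14}=0$, so $G_{23}=0$, giving $\mathfrak C_2$, with fibre the sphere $\ell_{23}^2+\ell_{24}^2+\ell_{34}^2=1$ intersected with $\ell_{34}=l$ fixed, i.e.\ a circle — codimension-one elliptic. For the other stratum one finds $\ell_{12}=\ell_{24}=0$ (forced by the Plücker relation $\mathcal C_2=0$ together with the linear system), leaving $\ell_{13}^2+\ell_{14}^2+\ell_{34}^2=1$ and $G_{23}=\ell_{13}^2+\ell_{14}^2=1-\ell_{34}^2$, which is $\mathfrak C_1$; here too the fibre at fixed $\ell_{34}$ is a circle. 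One then needs the eigenvalue computation of the linearisation $\nabla_{\bm L}\big[B_{\bm L}(\alpha\nabla\ell_{34}+\beta\nabla G_{23})\big]$ restricted to the leaf, to confirm that both curves carry a single pair of imaginary eigenvalues — hence codimension-one elliptic — and that, away from the corners $(\pm1,0)$ and $(0,1)$, there are no further critical values; this mirrors the corresponding step in the oblate and ellipsoidal cases.

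The main obstacle I anticipate is not the algebra of solving the linear system — that is short here because the integrals are quadratic and sparse — but rather pinning down the behaviour at the special point $(\ell_{34},G_{23})=(0,1)$, which the surrounding text (and the section preamble) flags as a \emph{degenerate} point where the almost-global action is merely continuous, not smooth. Strictly, the Proposition as stated only claims the two curves $\mathfrak C_1,\mathfrak C_2$ are codimension-one elliptic, so I would be careful to phrase the elliptic classification as holding along $\mathfrak C_1$ and $\mathfrak C_2$ \emph{excluding} their mutual endpoints, and note that the endpoint $(0,1)$ is the degenerate corner inherited from the oblate $o_{12\pm}$ pitchfork (or, equivalently from the prolate side, the limit of the focus-focus point), deferring its detailed analysis to the discussion of the spherical singularity and Figure~\ref{fig: RR sph }. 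The remaining bookkeeping — checking $G_{23}\ge0$ and $|\ell_{34}|\le1$ delimit the image, and that the two curves bound it — follows immediately from $p_k^2\ge0$ in \eqref{eq:Sep 23 system} exactly as in the ellipsoidal root-diagram argument.
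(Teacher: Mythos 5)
Your proposal is correct and follows the same route as the paper, whose proof simply records that the critical points and values are obtained by direct computation; your detailed solving of $B_{\bm L}(\alpha\nabla\ell_{34}+\beta\nabla G_{23})=0$ on the leaf, the identification of the two strata, the eigenvalue check for ellipticity, and the careful exclusion of the degenerate corner $(0,1)$ (the spherical singularity $D_{23}$ discussed immediately after the Proposition) all match what the paper intends. Only trivial slips: the $G_{23}=0$ stratum comes from $\nabla G_{23}$ (not $\nabla\ell_{34}$) degenerating, and on $\mathfrak{C}_1$ the vanishing components are $\ell_{12}=\ell_{23}=\ell_{24}=0$, consistent with the oblate critical points $\bm L=(0,\ell_{13},\ell_{14},0,0,\ell_{34})$.
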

\begin{proof} The computation of critical points and values are straight forward for this system.
\end{proof}
\textcolor{black}{The fibre of a regular value on $S^2\times S^2$ is a
torus $T^{2}$ with multiplicity one.} The fibres along $\mathfrak{C}_{1}$ and $\mathfrak{C}_{2}$
are single $S^1$. The intersections of $\mathfrak{C}_{1}$
and $\mathfrak{C}_{2}$ are codimension 2 elliptic points and have 1 critical point in their fibres. The linearisation $\nabla_{\bm L}B_{\bm L}(\alpha \nabla_{\bm L}G_{23} + \beta \nabla_{\bm L} \ell_{34})$ has eigenvalues $(0,0,-i \beta ,i \beta ,-i\beta ,i\beta)$ at $D_{23}=(0,1)$ making the peak of the parabola $D_{23}$ a degenerate singularity. Similar to the Lam\'e system the rank of the differential of the moment map drops by 1 at $D_{23}$ and it's fibre is $S^2$. This is also an example of a spherical type singularity. Spherical coordinates is a type of polyspherical coordinates and these have been studied in detail in \cite{RonanThesis}.  

In the limit $a\to1$, the bifurcation diagram for the oblate coordinates in 
Figure~\ref{fig:Oblate root and MM}~a) degenerates to Figure~\ref{fig: RR sph }~a). In particular, the elliptic-elliptic point
$o_{11}$ collides with the hyperbolic line $\mathcal{O}_{2}$ and
becomes degenerate. Similarly, setting $f_{2}=f_{3}$ in the Lam\'{e}
system causes the elliptic-hyperbolic point $t_{24}$ to collide with
the elliptic-elliptic point at $t_{34}$ while $T_{123}$ remains
degenerate.

\subsubsection{Actions}\label{sph-act}

The action variables for the $23-$spherical system \textcolor{black}{are
given by 
\[
\begin{aligned} {J}_{1} & =\frac{2}{\pi}\int_{0}^{r_{1}}p_1(s)ds &  &  & {J}_{2} & =\frac{2}{\pi}\int_{0}^{r_{2}}p_2(s)ds &  &  & {J}_{3} & =|\ell_{34}|\end{aligned}
\]
where $0\le r_{1}=g_{23}\le1$ and $0\le r_{2}=1-\frac{l_{23}^{2}}{1-g_{23}}\le1$.
We can simplify the non trivial actions to 
\[
\begin{aligned} {J}_{1}=1-\sqrt{1-g_{23}} &  & {J}_{2} & =\sqrt{1-g_{23}}-|l_{34}|.\end{aligned}
\]
}

The action map is shown in Figure \ref{fig: RR sph } b). Note that $(\ell_{34},\sqrt{1-G_{23}})$ defines continuous global action variables that are not differentiable at $G_{23}=1$. This is a system obtained from toric degeneration, see \cite{RonanThesis}.

\begin{figure}
\begin{centering}
\includegraphics[width=8cm,height=7cm]{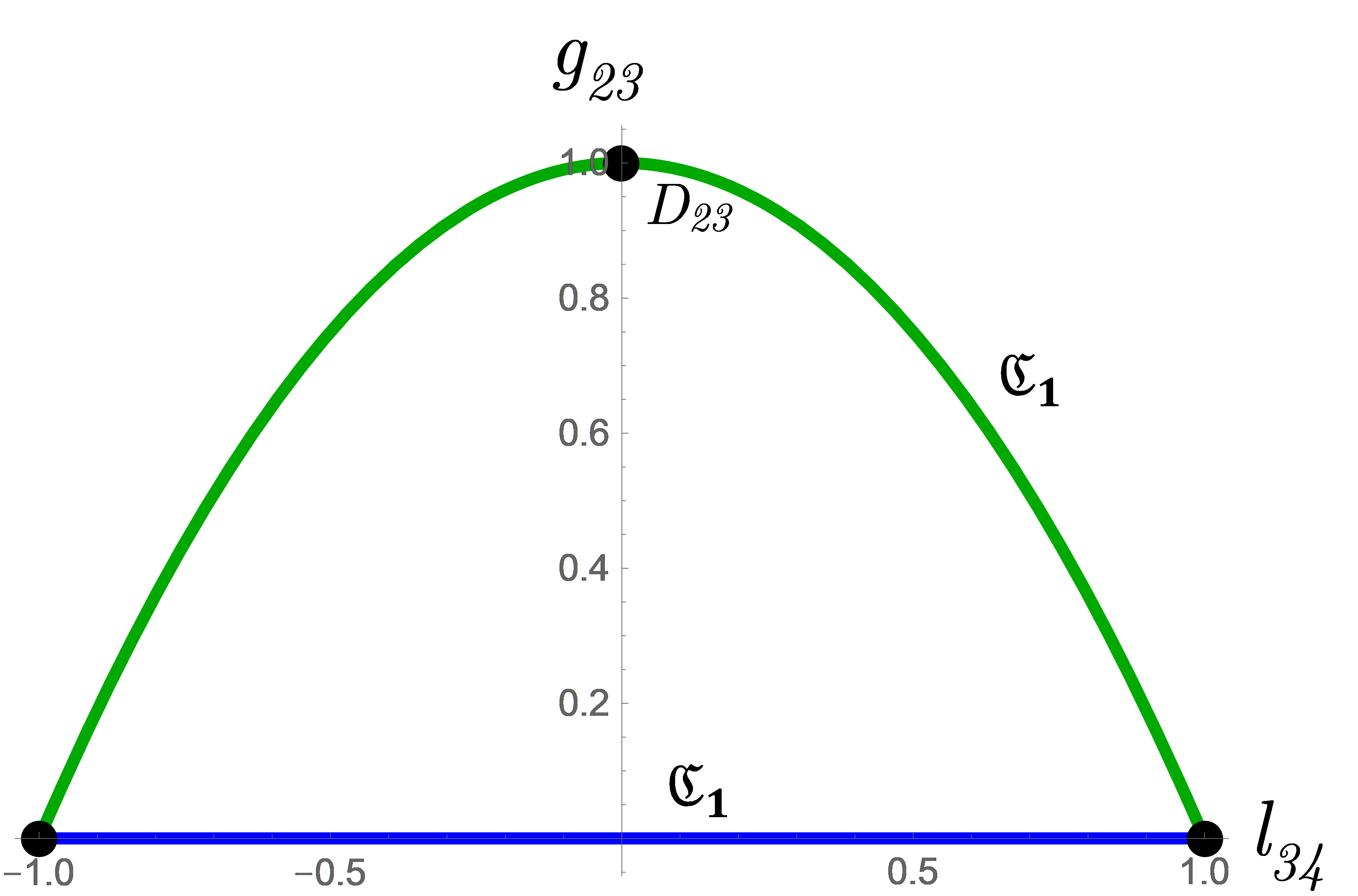}\includegraphics[width=7cm,height=7cm]{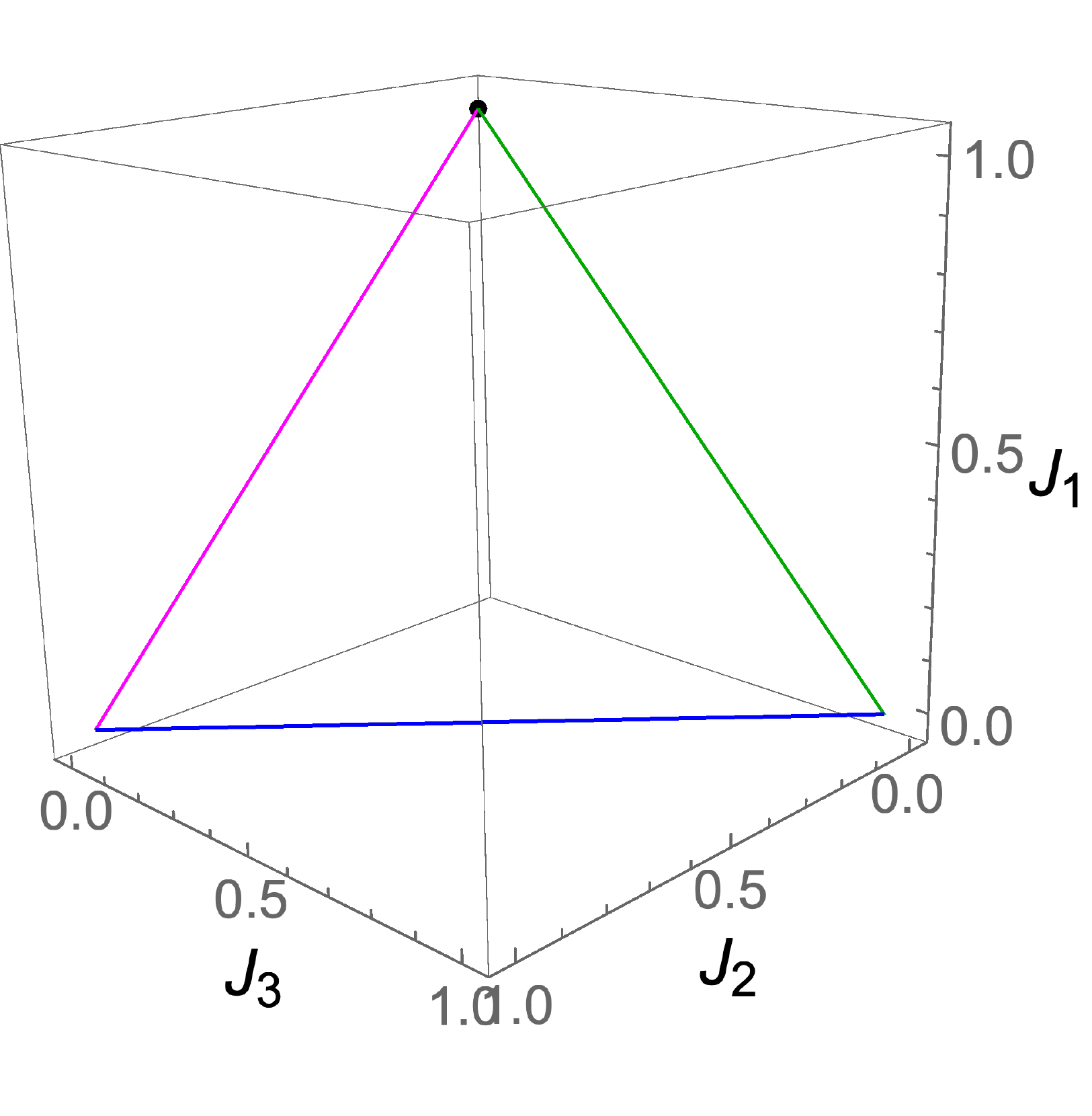}
\par\end{centering}
\caption{a) Momentum map and b) Action map for the $23$-spherical system.
\label{fig: RR sph }}
\end{figure}

\subsection{Cylindrical Coordinates}

The coordinate system with the highest symmetry has two global $S^1$ action, and there is only a single point in the Stasheff polytope for which this happens. The corresponding reduced system on $S^2\times S^2$ is toric.

\subsubsection{Separation of Variables}

The cylindrical coordinates (also called Hopf coordinates) $((1\ 2)\ (3\ 4))$ are a further degeneration
of the oblate coordinates obtained by setting both $e_{1}=e_{2}$
and $e_{3}=e_{4}$. \textcolor{black}{Specifically, the transformation
$(e_{2},e_{4})\to(e_{1}+\epsilon,e_{3}+\epsilon)$ along with $(s_{1},p_1,s_{3},p_3)\to(e_{1}+\epsilon\tilde{s}_{1},\tilde{p_1}/\epsilon,e_{3}+\epsilon\tilde{s}_{3},\tilde{p_3}/\epsilon)$
gives the following relationship between Cartesian coordinates and
cylindrical coordinates:}
\[
\begin{aligned}x_{1}^{2} & =s_{1}s_{2}, &  & x_{2}^{2}=s_{2}\left(1-s_{1}\right),\\
x_{3}^{2} & =s_{3}\left(1-s_{2}\right),&  & x_{4}^{2}=\left(1-s_{2}\right)\left(1-s_{3}\right),
\end{aligned}
\]
where $0\le s_{k}\le1$ and $k=1,2,3$. \textcolor{black}{The geodesic
Hamiltonian in these coordinates is 
\[
H_{Cyl}=-\frac{2p_{1}^{2}\left(s_{1}-1\right)s_{1}}{s_{2}}-2p_{2}^{2}\left(s_{2}-1\right)s_{2}+\frac{2p_{3}^{2}\left(s_{3}-1\right)s_{3}}{s_{2}-1}
\]
which trivially separates to give integrals $(H_{Cyl},\ell_{34}^2,\ell_{12}^2)$
} The separated equations are 
\begin{equation}
\begin{aligned}p_{2}^{2} & =\frac{l_{12}^{2}(s_{2}-1)-s_{2}\left(2h(s_{2}-1)+l_{34}^{2}\right)}{4(s_{2}-1)^{2}s_{2}^{2}} &  &  & p_{k}^{2} & =\frac{l_{\nu}^{2}}{4s_{k}(1-s_{k})}\end{aligned}
\label{eq:sep cyl}
\end{equation}
 where $\nu=12$ if $k=1$, $\nu=34$ if $k=3$ and $l_{\nu}$ denoted
the functional value of $\ell_{\nu}$.

\subsubsection{Critical Points and Momentum map}
\begin{prop}
The bifurcation diagram for the cylindrical system $(\ell_{12},\ell_{34})$ on $S^2\times S^2$ with $2h=1$ is
composed of $4$ straight lines $\ell_{34}=\pm(1\pm\ell_{12})$ which
intersect transversally at $(\pm1,0)$ and $(0,\pm1)$ (see Figure \ref{fig:Cylindrical-coordin}). 
\end{prop}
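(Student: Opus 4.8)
The plan is to recognise the reduced cylindrical momentum map $(\ell_{12},\ell_{34})$ as a linear reparametrisation of a genuine toric momentum map on $S^2\times S^2$, and then simply read off the image of its critical set. First I would pass to the coordinates $(\bm X,\bm Y)$ of \eqref{eq:ls to Xy}, in which the reduced phase space is the product of the two spheres $\mathscr C_1=4|\bm X|^2=2h$ and $\mathscr C_2=4|\bm Y|^2=2h$ of \eqref{eq:Cas in ls}, each of radius $\tfrac12$ once we fix $2h=1$, and the Poisson bracket \eqref{eq:Bxy} is the block-diagonal $\mathfrak{so}(3)\oplus\mathfrak{so}(3)$. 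Inverting \eqref{eq:ls to Xy} gives $\ell_{12}=X_1+Y_1$ and $\ell_{34}=X_1-Y_1$, so $(\ell_{12},\ell_{34})=A\circ(X_1,Y_1)$ with the invertible matrix $A=\left(\begin{smallmatrix}1&1\\ 1&-1\end{smallmatrix}\right)$, where $X_1$ and $Y_1$ are the first components of the two $\mathfrak{so}(3)$-factors.

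Next I would note that $X_1$ and $Y_1$ Poisson-commute (they lie in different blocks of \eqref{eq:Bxy}) and each generates a $2\pi$-periodic flow, namely the rotation about the $1$-axis of its sphere; hence $(X_1,Y_1)$ is the momentum map of an effective Hamiltonian $T^2$-action on $S^2\times S^2$. Since the momentum map of a product of Hamiltonian $S^1$-spaces is the product of the individual momentum maps, and the height function of a sphere of radius $\tfrac12$ has image $[-\tfrac12,\tfrac12]$ with its two poles as the only critical points, the image of $(X_1,Y_1)$ is the square $Q=\{\,|X_1|\le\tfrac12,\ |Y_1|\le\tfrac12\,\}$ and (by Atiyah--Guillemin--Sternberg \cite{atiyah,Guillemin1982}, or directly) its bifurcation diagram is exactly $\partial Q$: the four edges carry circle fibres, the four vertices $(\pm\tfrac12,\pm\tfrac12)$ are the fixed points of the $T^2$-action (both spheres sitting at a pole) and are elliptic--elliptic, and interior points have $T^2$ fibre.

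Finally I would transport this picture through $A$. Because $A$ is a linear isomorphism of the target, $d\big(A\circ(X_1,Y_1)\big)_p=A\,d(X_1,Y_1)_p$ drops rank exactly where $d(X_1,Y_1)_p$ does, so $(\ell_{12},\ell_{34})$ has the same critical points as $(X_1,Y_1)$ with the same Williamson types, and its set of critical values is $A(\partial Q)$. The edges $X_1=\pm\tfrac12$ map to the lines $\ell_{12}+\ell_{34}=\pm1$ and the edges $Y_1=\pm\tfrac12$ to $\ell_{12}-\ell_{34}=\pm1$; solving for $\ell_{34}$ these are precisely $\ell_{34}=\pm(1\pm\ell_{12})$. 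The vertices map to $(\ell_{12},\ell_{34})=(\pm1,0)$ and $(0,\pm1)$, and at each such point one line of slope $+1$ and one of slope $-1$ meet, hence transversally, recovering the four elliptic--elliptic corners.

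I do not anticipate a serious obstacle; the only points requiring care are the normalisation of the sphere radii (reading $|\bm X|=|\bm Y|=\tfrac12$ off \eqref{eq:Cas in ls} with $2h=1$) and the observation that although the $T^2$ generated by the time-one flows of $\ell_{12}$ and $\ell_{34}$ covers the $(X_1,Y_1)$-torus two-to-one (the lattice spanned by $(1,1)$ and $(1,-1)$ has index two in $\mathbb Z^2$), this non-effectiveness is irrelevant for the description of the critical values. Alternatively one can bypass the toric language and obtain the same four lines from \eqref{eq:sep cyl} by demanding $p_2^2\ge0$ for some $s_2\in[0,1]$: the numerator there is a downward parabola in $s_2$ equal to $-l_{12}^2$ at $s_2=0$ and $-l_{34}^2$ at $s_2=1$, so valid motion forces its two roots to lie in $[0,1]$, which with $2h=1$ reduces to $\big((|l_{12}|-1)^2-l_{34}^2\big)\big((|l_{12}|+1)^2-l_{34}^2\big)\ge0$ together with $0\le l_{12}^2+1-l_{34}^2\le2$; the boundary of this region is again the four lines $\ell_{34}=\pm(1\pm\ell_{12})$.
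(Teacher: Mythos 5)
Your argument is correct, and it is essentially the route the paper itself takes: the paper states this proposition without proof (the separation being regarded as trivial) and then immediately records the key fact you use, namely that $(X_1,Y_1)=\tfrac12(\ell_{12}+\ell_{34},\ell_{12}-\ell_{34})$ is the effective toric momentum map on $S^2\times S^2$ whose image is the standard Delzant square, so that $(\ell_{12},\ell_{34})$ is just its image under the unimodular-up-to-scale map $A$. Your computations check out — $|\bm X|=|\bm Y|=\tfrac12$ from \eqref{eq:Cas in ls} with $2h=1$, the vertices $A(\pm\tfrac12,\pm\tfrac12)=(\pm1,0),(0,\pm1)$, the four edge images $\ell_{34}=\pm(1\pm\ell_{12})$ — and the alternative derivation from the positivity of the numerator of $p_2^2$ in \eqref{eq:sep cyl} is also sound and matches the paper's root-diagram philosophy for the ellipsoidal case.
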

The fibre of a regular
point on $S^2\times S^2$ is $T^2$ with multiplicity one. The lines
are all codimension one elliptic and their fibres are single $S^1$. The intersections of the lines are elliptic-elliptic
critical values with a single critical point in their fibres. 

\subsubsection{Actions}\label{cyl-act}
\textcolor{black}{The trivial actions for the cylindrical system
are $(J_{1},J_{3})=(\left|\ell_{12}\right|,\left|\ell_{34}\right|)$
while the ``non trivial'' action is easily determined by $J_{1}+J_{2}+J_{3}=1$.}
The action map is shown in Figure~\ref{fig:Cylindrical-coordin}~b). 
The relation between the symmetry reduced actions $|\ell_{12}|$ and $|\ell_{34}|$ and the global $S^1$ actions is to forget the absolute value sign. In this way 4 copies of the right triangle in $J_1, J_3$ are glued together to a diamond in $\ell_{12}, \ell_{34}$.

\begin{figure}
\begin{centering}
\includegraphics[width=7cm,height=7cm]{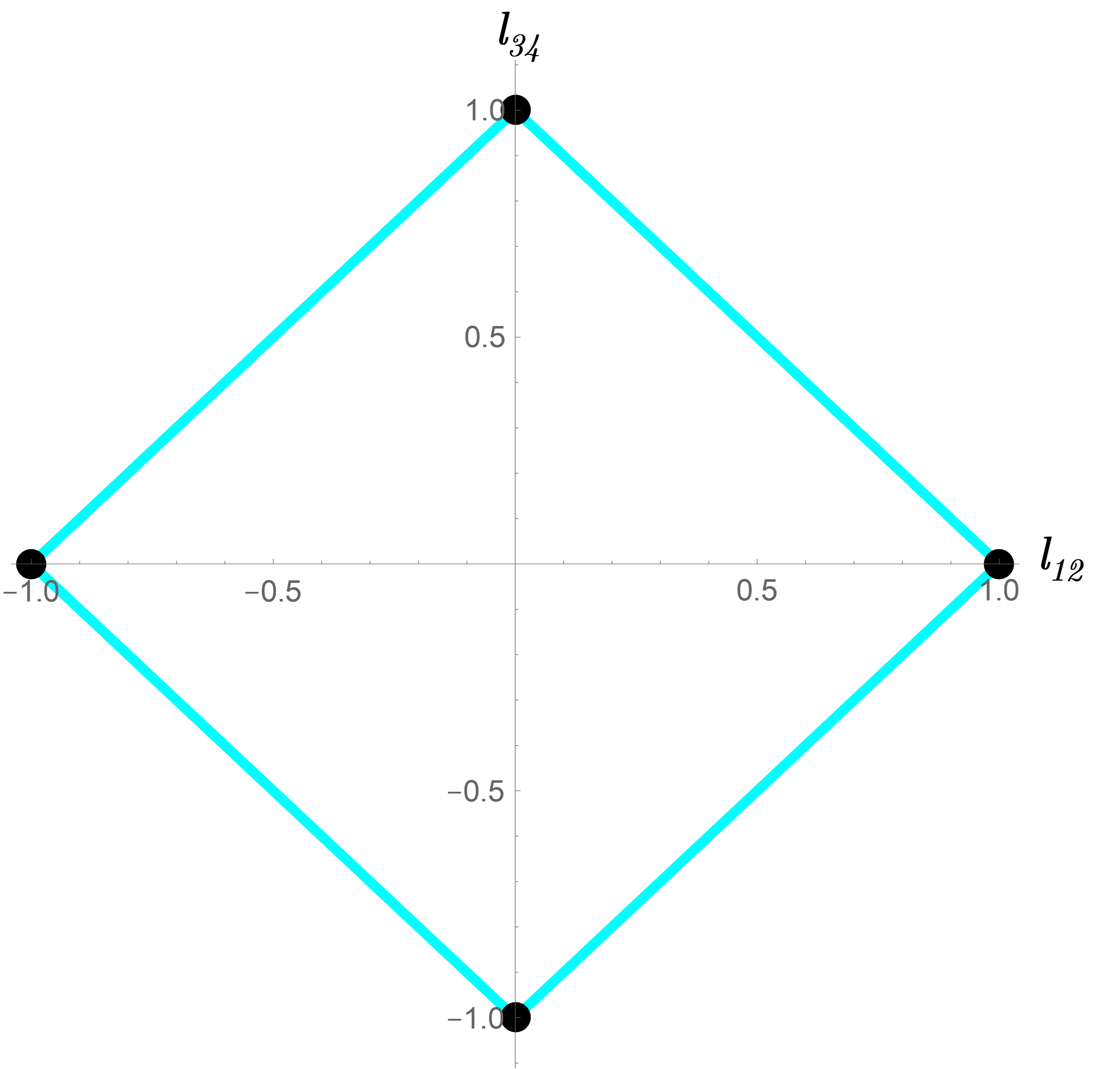}\quad\includegraphics[width=7cm,height=7cm]{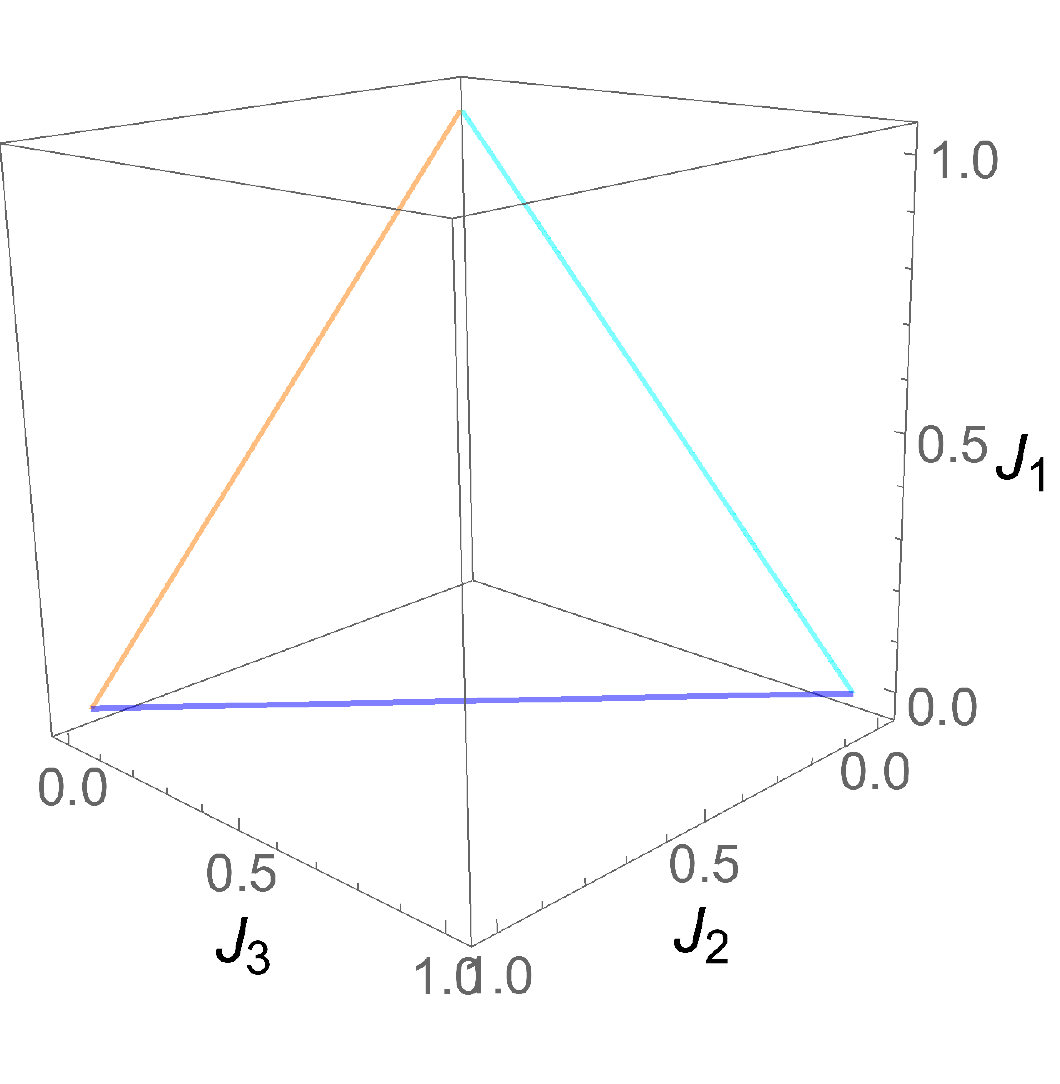}\\
\par\end{centering}
\caption{(a) Momentum map and (b) action map of the cylindrical system.\label{fig:Cylindrical-coordin}}
\end{figure}
 \begin{prop}
    The system $(X_1,Y_1)$ is a toric system on $S^2\times S^2$ where $X_1$ and $Y_1$ are defined in \eqref{eq:ls to Xy}.
\end{prop}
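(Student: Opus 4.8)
The plan is to exhibit $(X_1,Y_1)$ as the momentum map of an effective Hamiltonian $T^2$-action on $S^2\times S^2$; by definition this is exactly what it means for $(X_1,Y_1)$ to be a toric system, and the image is then automatically a Delzant polygon by the Atiyah--Guillemin--Sternberg and Delzant theorems \cite{atiyah,Guillemin1982,BSMF_1988__116_3_315_0}.

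First I would collect the structural facts already available. By \eqref{eq:Bxy} the reduced Poisson tensor in the $(\bm X,\bm Y)$ variables is the block-diagonal $\mathfrak{so}(3)\oplus\mathfrak{so}(3)$, and the reduced manifold $S^2\times S^2$ is the product of the coadjoint sphere $|\bm X|^2=h/2$ with the sphere $|\bm Y|^2=h/2$ (the level sets of $\mathscr C_1,\mathscr C_2$ in \eqref{eq:Cas in ls}). Since $X_1$ depends on $\bm X$ only, $Y_1$ on $\bm Y$ only, and the two $\mathfrak{so}(3)$ blocks commute, $\{X_1,Y_1\}_{\bm L}=0$; both also Poisson-commute with the Casimirs, so they restrict to honest functions on the leaf with $dX_1,dY_1$ independent on a dense set. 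Hence $(X_1,Y_1)$ is an integrable system on $S^2\times S^2$.

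Next I would identify the flows. Using the first $\mathfrak{so}(3)$ bracket one checks that the Hamiltonian vector field of $X_1$ is the infinitesimal rotation about the $X_1$-axis, $\dot X_1=0$, $\dot X_2=-X_3$, $\dot X_3=X_2$, on the first sphere, while $\bm Y$ stays fixed (block-diagonality). This flow is $2\pi$-periodic, with period independent of $h$, so $X_1$ generates an $S^1$-action on the first $S^2$ factor (with the two poles $X_2=X_3=0$ as fixed points) and acts trivially on the second factor; symmetrically $Y_1$ generates a $2\pi$-periodic $S^1$-action rotating the second $S^2$ about its $Y_1$-axis. As these two circle actions live on independent factors, are complete (the manifold is compact) and Poisson-commute, together they give a Hamiltonian $T^2=S^1\times S^1$-action on $S^2\times S^2$ with momentum map precisely $(X_1,Y_1)$.

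Finally I would verify effectiveness: $(t,s)\in T^2$ acts by rotating the first sphere through $t$ and the second through $s$, which is the identity only when $t,s\in 2\pi\mathbb Z$, so the action is effective. Therefore $(X_1,Y_1)$ is a toric system, and its image is the Delzant square obtained from the diamond of Figure~\ref{fig:Cylindrical-coordin}~b) under the linear change $X_1=\tfrac12(\ell_{12}+\ell_{34})$, $Y_1=\tfrac12(\ell_{12}-\ell_{34})$. I do not expect a serious obstacle; the one point requiring care is the normalisation, namely that $X_1$ itself---not a rational multiple of it---is the momentum map of an honest $S^1$-action, which is why the factor $\tfrac12$ in \eqref{eq:ls to Xy} is essential and also explains why the cylindrical integrals $(\ell_{12},\ell_{34})=(X_1+Y_1,\,X_1-Y_1)$ span only an index-two sublattice of the $(X_1,Y_1)$-lattice, so the $S^1\times S^1$-action they generate is not effective.
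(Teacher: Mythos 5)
Your proof is correct. You verify toricity directly on the reduced space: using the block-diagonal $\mathfrak{so}(3)\oplus\mathfrak{so}(3)$ structure \eqref{eq:Bxy}, the flow of $X_1$ is a $2\pi$-periodic rotation of the first sphere about its first axis (trivial on the second factor), symmetrically for $Y_1$, and the product of two effective circle actions on independent factors is an effective $T^2$-action with momentum map $(X_1,Y_1)$. The paper instead works upstairs: it starts from the two $S^1$-actions generated by $\ell_{12}$ and $\ell_{34}$ on $T^*S^3$, shows that the half-period element $(t_{12},t_{34})=(\pi,\pi)$ descends to the identity on $S^2\times S^2$ (both rotations induce the same involution $(\bm X,\bm Y)\mapsto(X_1,-X_2,-X_3,Y_1,-Y_2,-Y_3)$), concludes that the $(\ell_{12},\ell_{34})$ torus action is not effective on the quotient, and then passes to the half-sum and half-difference $(X_1,Y_1)$ to obtain a faithful $2\pi$-periodic action. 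The two arguments buy different things: yours is more self-contained and makes the effectiveness manifest without reference to the covering torus, while the paper's explains \emph{why} the geometrically natural cylindrical integrals $(\ell_{12},\ell_{34})$ fail to be toric (their lattice is an index-two sublattice of the $(X_1,Y_1)$-lattice, precisely because antipodal points of a great circle are identified under reduction) — a point you correctly recover in your closing remark about normalisation.
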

\begin{proof}
    Both $\ell_{12}$ and $\ell_{34}$ define smooth global $S^1$ action on $S^2\times S^2$. However, the torus action $(\ell_{12},\ell_{34})$ is not effective. Note that $\ell_{ij}$ is the generator of rotation in the $x_ix_j$-plane represented by $\exp({i t_{ij}\hat{\bm x}_i\wedge\hat{\bm x}_j})$ where $\hat{\bm x_i}$ is the unit vector in the $x_i$ axis. The action on the momenta $\bm y$ is the same. A rotation by $t_{ij}=\pi$ in the $x_ix_j$-plane has the effect
    \begin{equation*}
    \begin{aligned}
        x_i&\mapsto -x_i, & & & y_i&\mapsto -y_i,\\
        x_j&\mapsto -x_j, & & & y_j&\mapsto - y_j,
    \end{aligned}    
    \end{equation*}
    on $T^*S^3$. This induces the map 
    \begin{equation*}
    \begin{aligned}
        \ell_{ij}&\mapsto \ell_{ij}, & & & \ell_{mn}&\mapsto \ell_{mn},
    \end{aligned}    
    \end{equation*}
    for $i,j,m,n$ all distinct, and
    \begin{equation*}
    \begin{aligned}
        \ell_{ik}&\mapsto -\ell_{ik}, & & & \ell_{kj}&\mapsto -\ell_{kj},
    \end{aligned}    
    \end{equation*}
    for all $k\neq i,j$. In particular, the action with $t_{12}=\pi$ and $t_{34}=\pi$ both generate the same map 
    \begin{equation}
        (\bm X,\bm Y)\mapsto(X_1,-X_2,-X_3,Y_1,-Y_2,-Y_3)
        \label{l12l34}
    \end{equation}
    on $S^2\times S^2$. Since the flows of $\ell_{12}$ and $\ell_{34}$ commute and \eqref{l12l34} is an involution, we see that $(t_{12},t_{34})=(\pi,\pi)$ is the identity on $S^2\times S^2$, so the action is not effective. By taking half of the sum and difference, we see that $(X_1,Y_1)=\frac{1}{2}(\ell_{12}+\ell_{34},\ell_{12}-\ell_{34})$ is faithful with $2\pi$ period giving us a toric system on $S^2\times S^2$. The image of the momentum map of $(X_1,Y_1)$ is the unit square - the standard Delzant polytope for $S^2\times S^2$.

    Note that the torus action of $(\ell_{12},\ell_{34})$ is effective on $T^*S^3$ as $(t_{12},t_{34})=(\pi,\pi)$ gives $\bm x\mapsto -\bm x$ and $\bm y\mapsto-\bm y$. However, the points $(\bm x,\bm y)$ and $(-\bm x, -\bm y)$ are anti-podal points on the same great circle and thus become the same point on $S^2\times S^2$ after reduction.
\end{proof}

\section{Conclusion}\label{sec:S3-7}

The main novelty in this paper is the construction of a natural family of integrable system on $S^2 \times S^2$ in section \ref{sec:S3-4}, and the analysis of its Liouville foliation in section \ref{sec:degen}. It turns out that many properties of the reduced system are visible already in one way or another in the original St\"ackel system on $T^*S^3$. However, it should be pointed out that the upstairs system does not even have a natural Liouville foliation because it is superintegrable, and hence dynamically does not possess invariant tori, but just periodic orbits.
After reduction by the flow of the Hamiltonian, which after extracting the square root is a global $S^1$ action, an integrable system on $S^2\times S^2$ is obtained. 
The reduced system is Lie-Poisson with Lie-algebra $\mathfrak{so}(4)$. 

Since the reduction is done by the flow of the Hamiltonian the reduced system does not have a Hamiltonian any more, it just has commuting integrals. The definition of Liouville integrable system does not require a Hamiltonian, and the foliation into tori as defined by the integrals is defined independently of a Hamiltonian.
What is missing is the possibility to define the Hamiltonian vector field which induces a flow on these tori. But this is not necessary in order to study the equivalence of Liouville foliations of integrable systems.

At first it may be surprising that in the Liouville-Arnold theorem  the existence of action-angle variables near a regular torus does not need a Hamiltonian either. In fact, the action-angle variables are such that all of the integrals can be expressed as functions of the action variables. Moreover, considering diffeomorphism of the integrals changes the integrals, but does not change the action variables. We saw this explicitly   for a restricted class of transformations of the integrals in our case. Since after reduction by the Hamiltonian there is no distinguished function any more, the focus is fully on the action variables. For the foliation it makes sense to consider leaf-preserving homeomorphisms or diffeomorphisms, but from the point of view of the action variables the natural class is symplectomorphisms. 
Since the reduced symplectic manifold is compact the image of the momentum map is compact as well, and we have shown that the image of the action map (appropriately modified so that it is continous!) is a right triangle. This triangle is rigid, which means that it is the same for the whole family. What does change are the position and organisation of action values in the triangle that correspond to critical values of the momentum map. These play the role of the height invariant, and in fact for the prolate system which is semi-toric these turn into the height invariant. 

The Liouville-Arnold theorem holds near regular tori, and can be extended to open subsets of phase space bounded by separatrices. Only in rare cases are there no separatrices,  essentially this means that the system is toric. But most integrable systems do have singular fibres that are not just tori, and the classification of integrable system needs to take these  into account. It is crucial to note that the actions of the action-angle variables  can in general not be extended globally in phase space. If this is possible we call them global $S^1$ actions. Instead of $S^1$ action we may also speak of a global $SO(2)$ symmetry. A slightly less optimal situation occurs for a global $SO(3)$  symmetry, which leads to almost global $S^1$ actions, as described for the Lam\'e system, with an almost global $S^1$ action and a spherical type singularity. Examples of global $S^1$ actions do appear in our family through degenerations, and when they do appear they unfold the action map into the polygon invariant in the semi-toric case (prolate system) and into the Delzant polygon in the toric case (cylindrical system). 

Thus, for our family we have some analogues of important symplectic invariants, namely a convex polygon and generalisations of the height invariant. Certainly the semi-global symplectic invariants would need to be added, and at least in principle this is understood for the hyperbolic-hyperbolic point in the ellipsoidal family \cite{DullinVuNgoc07}, and generalisations to elliptic-hyperbolic points, degenerate points and the rank 1 hyperbolic lines would need to be worked out.
The interesting question is what kind of global invariants (like the twisting index invariant for semi-toric systems) would need to be added to the list so that it becomes the complete list of global symplectic invariants.

\appendix

\section{Appendix}

\subsection{Identification of $\widetilde{Gr}(2,4)$ with $S^{2}\times S^{2}$}\label{appen-gras}

There is a natural identification of $\widetilde{Gr}(2,4)$ with $S^{2}\times S^{2}$
via the Pl\"{u}cker embedding 
\[
\begin{aligned}i:\widetilde{Gr}(2,4) & \to\Lambda^{2}(\mathbb{R}^{4})\\
\text{span}(\bm v_{i},\bm v_{j}) & \to\frac{1}{\left|\bm v_{i}\right|\left|\bm v_{j}\right|}\bm v_{i}\wedge \bm v_{j}.
\end{aligned}
\]
If $(\bm v_{1},\bm v_{2},\bm v_{3},\bm v_{4})$ is an ordered-orthonormal basis of
$\mathbb{R}^{4}$ then 
\begin{equation}
\left(\bm v_{1}\wedge \bm v_{2},\bm v_{1}\wedge \bm v_{3},\bm v_{1}\wedge \bm v_{4},\bm v_{2}\wedge \bm v_{3},\bm v_{2}\wedge \bm v_{4},\bm v_{3}\wedge \bm v_{4}\right)\label{eq:basis vectors Gr(24)}
\end{equation}
 is a basis of $\Lambda^{2}(\mathbb{R}^{4})$ and a $\bm L\in\Lambda^{2}(\mathbb{R}^{4})$
is of the form 
\[
\bm L=\ell_{12}\bm v_{1}\wedge \bm v_{2}+\ell_{13}\bm v_{1}\wedge \bm v_{3}+\ell_{14}\bm v_{1}\wedge \bm v_{4}+\ell_{23}\bm v_{2}\wedge \bm v_{3}+\ell_{24}\bm v_{2}\wedge \bm v_{4}+\ell_{34}\bm v_{3}\wedge \bm v_{4}.
\]
The image of $\widetilde{Gr}(2,4)$ under $i$ is totally decomposable,
i.e. any $\bm L\in\widetilde{Gr}(2,4)$ can be expressed as $L=\bm x\wedge \bm y$
where $\bm x, \bm y\in\mathbb{R}^{4}$. Thus, $\bm L\in\text{im}(i)$ if and only
if $\bm L\wedge \bm L=0$. This yields the Pl\"{u}cker relation
\begin{equation}
\ell_{12}\ell_{34}-\ell_{13}\ell_{24}+\ell_{14}\ell_{23}=0.\label{eq:plucker in lij}
\end{equation}
 This relation defines the image of $\widetilde{Gr}(2,4)$ under $i$.
On $\Lambda^{2}(\mathbb{R}^{4})$ the Hodge star operator gives a
decomposition
\[
\star:\Lambda^{2}(\mathbb{R}^{4})\to\Lambda_{+}^{2}(\mathbb{R}^{4})\oplus\Lambda_{-}^{2}(\mathbb{R}^{4})
\]
 where $\Lambda_{\pm}^{2}(\mathbb{R}^{4})$ are the $\pm1$ eigenspaces.
For convenience we let $\bm V_{ij}\coloneqq \bm v_{i}\wedge \bm v_{j}$. Then
the effect of $\star$ on the basis vectors (\ref{eq:basis vectors Gr(24)})
is as follows
\[
\begin{aligned}\star(\bm V_{12}) & =\bm V_{34} &  &  & \star(\bm V_{34}) & =\bm V_{12}\\
\star(\bm V_{13}) & =-\bm V_{24} &  &  & \star(\bm V_{24}) & =-\bm V_{13}\\
\star(\bm V_{14}) & =\bm V_{23} &  &  & \star(\bm V_{23}) & =\bm V_{14}.
\end{aligned}
\]
This gives the following bases of $\Lambda_{+}^{2}(\mathbb{R}^{4})$
and $\Lambda_{-}^{2}(\mathbb{R}^{4})$:
\begin{equation}
\begin{aligned}\Lambda_{+}^{2}: & \frac{1}{2}\left\langle \bm V_{12}+\bm V_{34},\bm V_{13}-\bm V_{24},\bm V_{14}+\bm V_{23}\right\rangle \\
\Lambda_{-}^{2} & :\frac{1}{2}\left\langle \bm V_{12}-\bm V_{34},\bm V_{13}+\bm V_{24},\bm V_{14}-\bm V_{23}\right\rangle .
\end{aligned}  \label{eq:bases}
\end{equation}
\textcolor{black}{The Hodge star operator then induces a decomposition
\[
\begin{aligned}\star\circ i: & \widetilde{Gr}(2,4)\to S_{+}^{2}\times S_{-}^{2}\\
\text{span}(\bm x,\bm y) & \to\frac{1}{{2}\left|\bm x\right|\left|\bm y\right|}\left[\left(\bm x\wedge \bm y+\star(\bm x\wedge \bm y)\right)+\left(\bm x\wedge \bm y-\star(\bm x\wedge \bm y\right)\right]
\end{aligned}
\]
where $S_{+}^{2}\subseteq\Lambda_{+}^{2},S_{-}^{2}\subseteq\Lambda_{-}^{2}$.}
Note that
\[
\begin{aligned}
    \bm X =&\frac{1}{{2}\left|\bm x\right|\left|\bm y\right|}\left(\bm x\wedge \bm y+\star(\bm x\wedge \bm y)\right) & & & \bm Y = &\frac{1}{{2}\left|\bm x\right|\left|\bm y\right|}\left(\bm x\wedge \bm y-\star(\bm x\wedge \bm y)\right)
\end{aligned}\] are
are both unit vectors in $\mathbb{R}^{3}$ with the bases \eqref{eq:bases} of $\Lambda^2_\pm$. Thus, a plane spanned
by $(\bm x,\bm y)$ is identified with two unit vectors $(\bm X,\bm Y)$ in $\mathbb{R}^{3},$
i.e $S^{2}\times S^{2}$. \textcolor{black}{Geometrically $S_{+}^{2}$
and $S_{-}^{2}$ represent the left and right isoclinic rotations
in $\mathbb{R}^{4}$.} Note that $\exp(\bm x\wedge \bm y)$ is a rotation
in the plane spanned by $\bm x$ and $\bm y$. 

For a given point $(\bm{X},\bm{Y})\in S^{2}\times S^{2}$, we have $T^{-1}(\bm{X},\bm{Y})=(\ell_{12},\ell_{13},\ell_{14},\ell_{23},\ell_{24},\ell_{34})$
defines an element $\bm{L}\in\Lambda^{2}(\mathbb{R}^{4})$ where the map $T$ is defined in \eqref{eq:ls to Xy}. For a
given $\bm{L}$ we can find the corresponding circle on $S^{3}$ by
considering the linear map 
\[
\begin{aligned}M_{\bm{L}} & :\Lambda^{1}(\mathbb{R}^{4})\to\Lambda^{3}(\mathbb{R}^{4})\\
\bm v & \to \bm v\wedge\bm{L}.
\end{aligned}
\]
Since $\bm{L}$ is decomposable with $\bm{L}=\bm{x}\wedge \bm{y}$ then $\ker(M_{\bm{L}})=\text{span}(\bm{x},\bm{y})$.
Let $(b_{1},b_{2},b_{3},b_{4})$ be a basis of $\mathbb{R}^{4}$ and
$(b_{123},b_{124},b_{134},b_{234})$ be bases of $\Lambda^{3}(\mathbb{R}^{4})$.
Then we can represent $M_{\bm{L}}$ as 
\[
M_{\bm{W}}=\begin{pmatrix}\ell_{23} & -\ell_{13} & \ell_{12} & 0\\
\ell_{24} & -\ell_{14} & 0 & \ell_{12}\\
\ell_{34} & 0 & -\ell_{14} & \ell_{13}\\
0 & \ell_{34} & -\ell_{24} & \ell_{23}
\end{pmatrix}.
\]
\textcolor{black}{If $\bm{L}$ is in the image of the Pl\"{u}cker embedding
then $\text{rank}(M_{\bm{L}})$ is exactly $2$ as all $16$ of the
$3\times3$ minors vanishes under the Pl\"{u}cker relation. Thus, finding
the nullspace of $M_{\bm{L}}$ gives us an unoriented plane in $\mathbb{R}^{4}$
whose intersection with $S^{3}$ is the fibre over $S^{2}\times S^{2}$.}

\subsection{Geodesic Flow on $S^{2}$}

Following a similar construction to Section \ref{sec:Geodesic-Flow-on-S3},
we can define the geodesic flow on $S^{2}$ as a constrained system
on $T^{*}\mathbb{R}^{3}$ with Cartesian coordinates $(\bm{x},\bm{y})$
where $\bm{x}\cdot\bm{x}=1$ and $\bm{x}\cdot\bm{y}=0$. The Dirac
bracket yields a Poisson structure that can still be accurately represented
by the matrix $B_{(\bm{x},\bm{y})}$ in (\ref{eq:BPQ-1-1}), where
here $\bm{x}$ and $\bm{y}$ are vectors in $\mathbb{R}^{3}$. The
invariants of the Hamiltonian $H=\frac{1}{2}\bm{y}\cdot\bm{y}$ are
the three angular momenta $\bm{L}=(\ell_{12},\ell_{13},\ell_{23})$.
Symplectic reduction by the $S^{1}$ action of $\sqrt{ 2 H}$ gives a reduced
space that is diffeomorphic to the sphere $S^{2}$ given by the Casimir
$\ell_{12}^{2}+\ell_{13}^{2}+\ell_{23}^{2}=2h=1$. The Poisson algebra
of the invariants $\ell_{ij}$ is isomorphic to $\mathfrak{so}(3)$.

There are 2 non-equivalent orthogonal separable coordinates on the
sphere $S^{2}$ leading to 2 distinct St\"{a}ckel systems. They are the
spherical coordinates and the elliptic coordinates with semi-axes
$(e_{1},e_{2},e_{3})$.

\subsubsection{Elliptic Coordinates on $S^{2}$}\label{s2ellip}

The elliptic coordinates with semi-axes $(e_{1},e_{2},e_{3})$ are
defined by 

\[
\begin{aligned}x_{1}^{2} & =\frac{(s_{1}-e_{1})(s_{2}-e_{1})}{(e_{2}-e_{1})(e_{3}-e_{1})}\\
x_{2}^{2} & =\frac{(s_{1}-e_{2})(s_{2}-e_{2})}{(e_{1}-e_{2})(e_{3}-e_{2})}\\
x_{3}^{2} & =\frac{(s_{1}-e_{3})(s_{2}-e_{3})}{(e_{1}-e_{3})(e_{2}-e_{3})}.
\end{aligned}
\]

Performing St\"{a}ckel separation or analysis using compatible Poisson
structures both gives the separation constants $1=\ell_{12}^{2}+\ell_{13}^{2}+\ell_{23}^{2}$
and $\eta_{1}=e_{3}\ell_{12}^{2}+e_{2}\ell_{13}^{2}+e_{1}\ell_{23}^{2}$.
Using the matrix $C=\text{diag}(\lambda-c_{1},\lambda-c_{2},\lambda-c_{3})$,
the Poisson matrix for $(so_{3}^{*},\{\cdot,\cdot\}_{C})$ is given
by

\[
B_{C}=\left(\begin{array}{ccc}
0 & \ell_{23}(\lambda-e_{1}) & \ell_{13}(e_{2}-\lambda)\\
\ell_{23}(e_{1}-\lambda) & 0 & \ell_{12}(\lambda-e_{3})\\
\ell_{13}(\lambda-e_{2}) & \ell_{12}(e_{3}-\lambda) & 0
\end{array}\right).
\]

The bracket $\{\cdot,\cdot\}_{C}$ drops rank only when exactly 2
of the $\ell_{ij}'s$ vanishes, giving the 6 poles $\pm(1,0,0),\,\pm(0,1,0)$
and $\pm(0,0,1)$ as critical points on the sphere $S^2$ defined by fixing the Casimir. The image of the momentum map
is the line segment $[e_{1},e_{3}]$ with 3 critical values at $e_{1},\,e_{2}$
and $e_{3}$. The critical points at $e_{1}$ and $e_{3}$ are elliptic
and ones at $e_{2}$ are hyperbolic. The fibres of $e_{1}$ and $e_{3}$ are
the poles $\pm(0,0,1)$ and $\pm(1,0,0)$ respectively. The preimage
of $e_{2}$ is the intersection of the sphere $1=\ell_{12}^{2}+\ell_{13}^{2}+\ell_{23}^{2}$
with the ellipsoid $e_{2}=e_{3}\ell_{12}^{2}+e_{2}\ell_{13}^{2}+e_{1}\ell_{23}^{2}$.
The sphere and this ellipsoid intersect tangentially at the poles
$\pm(0,1,0)$.

The reduced system is of course the Euler top with phase space $S^2$ and Hamiltonian $\eta_1$, where $(e_1,e_2,e_3)$ are the inverse moments of inertia of the top.

\subsubsection{Spherical Coordinates on $S^{2}$}

We define the spherical coordinates on $S^{2}$ with 

\[
\begin{aligned}x_{1}^{2} & =s_{1}\\
x_{2}^{2} & =(1-s_{1})s_{2}\\
x_{3}^{2} & =(1-s_{1})(1-s_{2})
\end{aligned}
.
\]
This system easily separates with separation constants $1=\ell_{12}^{2}+\ell_{13}^{2}+\ell_{23}^{2}$
and $\eta_{1}=\ell_{23}^{2}$. The image of the momentum map is the
line segment $[0,1]$ with 2 critical values at $0$ and $1$. The
critical points at $1$ are the poles $\pm(0,0,1)$ and are elliptic.
The point $\ell_{23}^{2}=0$ is degenerate and it's fibre is the equator
of the sphere. 

The reduced system is the symmetric Euler top with phase space $S^2$ and two equal moments of inertia.

\bibliographystyle{abbrv}
\bibliography{ClassicalS3Ref}

\end{document}